\title{Bias-Optimal Bounds for SGD: A Computer-Aided Lyapunov Analysis}
\author{
  Daniel Cortild$^{1,2}$ \orcidlink{0000-0002-3278-1716}, 
  Lucas Ketels$^{2,3}$ \orcidlink{0009-0009-0669-8757}, 
  Juan Peypouquet$^2$ \orcidlink{0000-0002-8551-0522}, 
  Guillaume Garrigos$^3$ \orcidlink{0000-0002-8613-5664}
}
\authorrunning{Daniel Cortild, Lucas Ketels, Juan Peypouquet, Guillaume Garrigos}
\institute{
    $^1$University of Oxford, Mathematical Institute, Woodstock Road, OX2 6GG Oxford, United Kingdom \\
    $^2$University of Groningen, Bernoulli Institute, Bernouilliborg, 9747AG Groningen, Netherlands \\
    $^3$Université Paris Cité and Sorbonne Université, CNRS, Laboratoire de Probabilités, Statistique et Modélisation, F-75013 Paris, France \\
    \texttt{daniel.cortild@maths.ox.ac.uk, l.ketels@rug.nl, j.g.peypouquet@rug.nl, garrigos@lpsm.paris}
}
\date{
Submitted: January 30, 2026 %, Accepted: XXX, Published online: XXX
}
\begin{document}

\maketitle

\begin{abstract}
    The non-asymptotic analysis of Stochastic Gradient Descent (SGD) typically yields bounds that decompose into a bias term and a variance term. In this work, we focus on the bias component and study the extent to which SGD can match the optimal convergence behavior of deterministic gradient descent. Assuming only (strong) convexity and smoothness of the objective, we derive new bounds that are bias-optimal, in the sense that the bias term coincides with the worst-case rate of gradient descent. Our results hold for the full range of constant step-sizes $\gamma L \in (0,2)$, including critical and large step-size regimes that were previously unexplored without additional variance assumptions. The bounds are obtained through the construction of a simple Lyapunov energy whose monotonicity yields sharp convergence guarantees. To design the parameters of this energy, we employ the Performance Estimation Problem framework, which we also use to provide numerical evidence for the optimality of the associated variance terms.
\end{abstract}

\keywords{Stochastic Gradient Descent, Convex Optimization, Performance Estimation Problem, Lyapunov Analysis}

% !TEX root = main.tex

\section{Introduction}

Given a family $\{f_i\}_{i\in \mathcal I}$ of convex and smooth real-valued functions 
$f_i\colon \H\to \R$, we consider the problem given by
\begin{equation}\tag{$\mathbb E$-minimization}\label{LR2::eq:finite}
    \min_{x\in \H}f(x),\quad \text{where }f\coloneqq \ExpD{i\sim \mathcal D}{f_i},
\end{equation}
where $\mathcal D$ is a probability distribution over $\mathcal I$.
We assume $\argmin(f)$ to be nonempty. When the indexing set $\mathcal I$ is finite, note that we recover the typical finite sum minimization problem.

The Stochastic Gradient Descent (SGD) algorithm \citep{robbins_stochastic_1951} is one of the most popular approaches for solving Problem \eqref{LR2::eq:finite}, and is widely used for large-scale machine learning and stochastic optimization problems. In its standard version, at each iteration, SGD performs a gradient step using one of the functions, which is chosen at random. More precisely, given the iterate $x_t$, it picks $i_t$ independently and identically distributed according to $\mathcal D$, and computes $x_{t+1}$ via
\begin{equation}\label{S1::eq:SGD}\tag{SGD}
    x_{t+1}=x_t-\gamma \nabla f_{i_t}(x_t),
\end{equation}
where $\gamma>0$ is the step-size. Intuitively, SGD is able to progress towards a solution of the problem because it follows a direction that, on average, is an unbiased estimator for $\nabla f$. The simplicity and efficiency of SGD make it the go-to algorithm for training deep neural networks and other models on massive datasets, where computing the entire gradient $\nabla f$ is intractable or too expensive. 
While we keep our focus on vanilla SGD, all the results in this paper also apply to mini-batching and non-uniform sampling.
This is a simple consequence of the  the arguments laid out in \cite{gower_sgd_2019}, which we also provide in Appendix \ref{SA::sec:extensions} for completeness and for the readers convenience.

\textbf{Complexity of SGD.} Quantitative results on the behavior of SGD consist in upper bounds for a metric $\Delta(T)$, which is to be interpreted as an optimality gap at iteration $T$. 
Typical choices for such metric include the expected solution gap
$\Exp{\Vert x_T - x_*\Vert^2}$,  
the expected function value gap $\Exp{f(x_T) - \inf f}$, 
the expected average function value gap $\tfrac{1}{T}\sum_{t=0}^{T-1} \Exp{f(x_t) - \inf f}$, or the expected ergodic function value gap $\Exp{f(\bar x_T) - \inf f}$, where  $\bar x_T$ is the Ces\`aro average of the iterates.
In most known results \cite{garrigos_handbook_2024}, these upper bounds are expressed as the sum of two terms, taking the form 
\begin{equation*}
\Delta(T)\le {\rm Bias}(T)+{\rm Variance}(T),
\end{equation*}
where, typically, the {\it bias term} vanishes when $T \to + \infty$ and the \emph{variance term}  can be made arbitrarily small by taking a small enough step-size.
For strongly convex and smooth problems,  the metric $\Delta(T)$ will typically be the expected solution gap.
For convex smooth problems, most proofs construct a bound on the expected average function value gap, and then derive a bound on the expected ergodic function value gap through Jensen's inequality.

This paper discusses and addresses the following question: \emph{what are the best non-asymptotic bounds for SGD?}
In recent years, remarkable efforts have been made to answer this question for various deterministic optimization methods.
These results include the optimal convergence rates of gradient descent \citep{drori_performance_2014,taylor_smooth_2017,rotaru_exact_2024}, proximal gradient descent \citep{taylor_exact_2018}, gradient descent with line-search \citep{de_klerk_worst-case_2017}, to name a few.
In contrast, 
obtaining an optimal non-asymptotic complexity of stochastic algorithms is significantly more challenging and the optimal complexity of SGD remains nowadays unknown. 
In this work, we establish new and improved bounds for SGD, that are provably optimal \emph{in some sense}. These bounds take the form
\begin{equation*}
\Delta(T)\le {\rm Bias}(T)\cdot \|x_0-x_*\|^2+{\rm Variance}(T)\cdot \sigma_*^2,
\end{equation*}
where $x_*$ is a solution of the problem and $\sigma^2_* = \mathbb{V}[\nabla f_i(x_*)]$ is the \textit{solution gradient variance} constant (see more details in Section \ref{SA::sec:assumptions}).
We note that in the deterministic setting, where SGD reduces to GD and where $\sigma^2_*=0$, such bound reduces to $\Delta(T)\le \text{Bias}(T)\cdot \|x_0-x_*\|^2$. 
We will therefore say that a bound is \textit{bias-optimal} if the 
bias term matches the optimal convergence rate of GD.
In particular, a bias-optimal bound would guarantee that SGD attains an unimprovable convergence rate of the metric $\Delta (T)$ in the \textit{interpolation regime} (that is when $\sigma^2_* = 0$). We note, however, that it is not clear whether general bounds for SGD always take the form of a gradient descent bound plus an additional variance term. In this work, we solely focus on bounds of the given form.

\textbf{Contributions.}
In the convex case, we establish new and improved bounds in expectation for the average function value gap, which automatically convert into bounds for the more standard ergodic function value gap. Our bounds, obtained in Lemmas \ref{S2::thm:main_small}, \ref{S2::thm:main_1} and \ref{S2::thm:main_large}, with simplified notation for the ease of presentation, are, for arbitrary $\varepsilon\in (0, 2)$,
\begin{equation*}
    {\rm Bias}(T) \simeq
    \begin{cases}
        \tfrac{1}{2 \gamma T}  & \text{ if } \gamma L \in (0,1), \\
        \tfrac{1}{(2 - \varepsilon) \gamma T} & \text{ if } \gamma L =1, \\
        \tfrac{1}{2 \gamma (2 - \gamma L) T} & \text{ if } \gamma L \in (1,2),
    \end{cases}
    \qquad
    {\rm Variance}(T) \simeq
    \begin{cases}
        \tfrac{\gamma}{2 (1-\gamma L)}  & \text{ if } \gamma L \in (0,1), \\
        \tfrac{\gamma(2+\varepsilon)}{\varepsilon (2 - \varepsilon)} & \text{ if } \gamma L =1, \\
        \tfrac{\exp(T)}{2 - \gamma L} & \text{ if } \gamma L \in (1,2).
    \end{cases}    
\end{equation*}

For short step-sizes $\gamma L \in (0,1)$, our bounds provide better constants than what was previously known.
More interesting, we prove that our bound is bias-optimal for both the expected average function value gap and the expected ergodic function value gap.
The setting $\gamma L \in [1,2)$ is more delicate to summarize. 
Our bounds are totally new, in the sense that this setting was never studied before.
More importantly, they reveal and suggest non-trivial interactions between the bias and variance terms which, up to our knowledge, were never observed before.
We now split our discussion between the critical and large step-sizes.

For large step-sizes $\gamma L \in (1,2)$, we prove that our bound is bias-optimal for the expected average function value gap.
A striking feature of this bound is that the variance term has an impractical exponential grow with time, which is a phenomenon previously unobserved.
We were not able to prove formally that this variance term cannot be improved, but our numerical experiments (see more details in the dedicated section) suggest that one cannot avoid a variance term diverging with time whilst keeping an optimal bias term.
However, if we allow for a slightly worse bias term, hence deviating from the bias-optimal setting, this variance may be uniformly bounded with respect to $T$ (see Lemma \ref{S2::th:large subopt} for more details).
Of course, those considerations about the variance are irrelevant in the interpolation regime, where $\sigma_*^2=0$.

Regarding the critical step-size $\gamma L = 1$, it is natural to conjecture that the optimal bias should be $\tfrac{L}{2 T}$, which is the limiting optimal bias as $\gamma L \to 1$.
This constant $\tfrac{L}{2 T}$ plays a particular role, because it is also the infimum of the optimal biases for $\gamma L \in (0,2)$, and is also the best constant for GD.
We prove that it is possible to obtain a bound for which the bias term is arbitrarily close to $\tfrac{L}{2 T}$, but that it comes at the cost of a constant variance term which diverges. We moreover conjecture that it is not possible to derive a bound with a bias term equal to $\tfrac{L}{2 T}$ when $\gamma L =1$, which we support by numerical evidence.

We highlight that the study of the critical step-size $\gamma L=1$ in the convex smooth setting is novel. 
This in turn unlocks the study of the \emph{stochastic proximal algorithm}, by viewing it as an instance of SGD applied to regularized functions. 
To the best of our knowledge, we establish the first complexity guarantees for this method in a general convex nonsmooth setting, considering possibly functions which do not have a full domain.
This is presented in Section \ref{SA::sec:prox}.

Let us now turn to the smooth strongly convex case, where each function $f_i$ is $L$-smooth and $\mu$-strongly convex.
Our performance metric is the expected distance to the solution, namely $\Delta(T) = \Exp{\| x_T-x_*\|^2}$.
Conceptually, our results are of the same nature as in the convex setting.
Firstly, we obtain bounds for the full range of step-sizes $\gamma L \in (0,2)$, while the existing literature \citep{bach_non-asymptotic_2011,needell_stochastic_2016,gower_sgd_2019} considered at best $\gamma L < 1$.
Extending the range of valid step-sizes allows us to consider the critical step-size $\gamma= \tfrac{2}{\mu + L} >1/L$, which is known to be optimal in the deterministic setting.
Second, we show that our bounds are bias-optimal, except for the critical step-size for which we observe again a singularity.
For every non-critical step-size we obtain 
${\rm Bias}(T) = \phi^{2T}$, where $\phi$ defines the optimal constant  for GD, namely
\begin{equation*}
    \phi = \max\{ 1 - \gamma \mu ; \gamma L -1 \}.
\end{equation*}
For the critical step-size, we show that $\phi^{2T}$ can be approached with an arbitrary tolerance, with the cost of a variance growing to infinity.
Supported by numerical results, we again conjecture that this optimal bias cannot be obtained.

\textbf{Proof Strategy.} For each fixed time horizon $T$, we define a time-dependent random energy sequence 
\begin{equation}\tag{Lyapunov}\label{S0::eq:Lyapunov}
    E_t=a_t\cdot \|x_t-x_*\|^2+\rate\cdot \sum_{s=0}^{t-1}\big(f(x_s)-\min f\big) - \sum_{s=0}^{t-1}e_s\sigma_*^2 \quad\forall t=0, \ldots, T,
\end{equation}
where $\rate$, $(a_t)$ and $(e_t)$ are nonnegative parameters. 
The first term in the Lyapunov energy is the distance to the solution $\Vert x_t - x_* \Vert^2$, a classical term which decreases for deterministic gradient dynamics.
The second term involves the function gap $f(x_t) - \min f$, which also typically decreases for gradient descent.
We chose to replace the standard term $t(f(x_t) - \min f)$ with the sum of the past function gaps, which is of the same order in time.
Up to our knowledge, this choice is new in the context of stochastic optimization, and we believe this choice to be critical to be able to derive bias-optimal bounds.
The third term is a \emph{negative} cumulated sum, 
introduced to compensate the fluctuations caused by the variance in the SGD algorithm, and hence to allow the Lyapunov energy to decrease.

A decrease of $\Exp{E_t}$ along the iterations implies that $\Exp{E_T} \leq E_0$, which translates into a bound for SGD (see Section~\ref{sec:bounds-decrease} for more details).
Our strategy is therefore to identify \textit{admissible} Lyapunov parameters, which are those that make the energy decrease in expectation.
Among those admissible parameters, our goal is to find the ones minimizing, whenever possible, the bias term in our bound.

To this end, we rely on the approach followed in \cite{taylor_stochastic_2019} (see also \cite{lessard_analysis_2016,fercoq_defining_2024,upadhyaya_automated_2025}) to study Lyapunov potentials in a systematic fashion. 
It is based on the \textit{Performance Estimation Problem} (PEP) methodology, which originally appeared in \cite{drori_performance_2014}, and was further developed in \cite{taylor_smooth_2017}. 
The PEP framework allows to find numerically admissible Lyapunov parameters, but also the ones minimizing the corresponding bias, by reformulating such problem into a semidefinite program.
It helped us in two ways.
First it helped us to navigate the space of admissible Lyapunov parameters and to obtain initial guesses for some parameter sequences, which eventually led us to our theoretical proofs. The reader curious about this process may check the proofs in the appendix for a more in-depth description.
Second, it allowed us to numerically assess certain conjectures, such as the bias-optimality of our bounds, or the singularity of critical step-sizes.

\textbf{Related Work.} 
In early studies of SGD in the convex (smooth) framework, it was standard to make strong assumptions on the stochastic gradients $\nabla f_{i_t}(x_t)$.
The oldest, and possibly the best-known, are the uniform boundedness of the variance, and its variant, the uniform boundedness of the gradients, namely \citep{robbins_stochastic_1951,nemirovski_robust_2009,rakhlin_making_2012,schmidt_fast_2013}
$$\sup_{x\in \mathcal H}\Exp{\|\nabla f_i(x) - \nabla f(x)\|^2}<+\infty\quad \text{or}\quad \sup_{x\in \mathcal H}\Exp{\|\nabla f_i(x) \|^2}<+\infty.$$
Such assumptions date back to the early works of the 1950s \citep{chung_stochastic_1954,sacks_asymptotic_1958} and continued to be employed until recently \citep{polyak_acceleration_1992a,nemirovski_robust_2009}.
A culminating contribution to the nonasymptotic analysis of SGD under bounded variance assumptions is \cite{taylor_stochastic_2019}, where new and sharp convergence rates were established.
Over time, these variance assumptions have been progressively relaxed and replaced by conditions that control the gradient or its variance through quantities such as $\|x\|$ or $\|\nabla f(x)\|$, we refer to the overview in \cite{alacaoglu_towards_2025} and the references therein.
Nevertheless, variance-based assumptions can be unrealistic in practice \citep{bottou_optimization_2018,nguyen_sgd_2018}.
Moreover, uniform boundedness of the gradients can never be satisfied for strongly convex functions.

A new line of research was initiated in \cite{bach_non-asymptotic_2011} by removing any such variance assumptions, followed by the more recent works 
\citep{needell_stochastic_2016,nguyen_sgd_2018,gower_sgd_2019,gower_sgd_2021,khaled_better_2023,demidovich_guide_2023}. 
The core idea is to exploit the convexity and smoothness to obtain a variance transfer inequality, bounding the variance at any point with a term involving $\sigma^2_*$. The new gold standard for the analysis of SGD quickly became to not make any variance assumption when assuming convexity and smoothness, a framework into which our work fits.

We point out that convexity and smoothness is a sufficient but not a necessary condition to obtain complexity results: all one needs is for some variance transfer inequality to hold.
We mention for instance the expected smoothness assumption \citep{gower_stochastic_2021,gower_sgd_2019,khaled_better_2023} and the ABC condition \citep{demidovich_guide_2023}, which were shown to be sufficient to establish performance guarantees. These assumptions are strictly weaker than convexity and smoothness, enabling the analysis of SGD in a nonconvex framework. While our analysis could also be carried out under weaker assumptions than convexity and smoothness, it is unclear how practical such assumptions would be, and we do not delve further into this in this work.

\textbf{Note on Concurrent Work.} 
In the convex setting, our work focuses on rates on the \textit{ergodic} function value gap $f(\bar x_T)-\min f$, which is currently the standard for SGD without uniformly bounded variance. In contrast, when making a stronger assumption on the stochastic gradients, it is standard to obtain rates on the \textit{last-iterate} function value gap \cite{bach_non-asymptotic_2011,taylor_stochastic_2019,liu_revisiting_2023}. At the time of writing this document, it was unclear whether SGD without stronger variance assumptions could enjoy last-iterate rates, hence justifying our choice of performance metric. During the review phase of the manuscript, two independent works have answered this question positively \cite{attia_fast_2025,garrigos_lastiterate_2025}, opening the door for the possibility to extend our results to the setting of last-iterate rates.

\textbf{Paper Organization.}
This paper is organized as follows: We present the formal problem and the assumptions in Section \ref{SA::sec:assumptions}. In Section \ref{S3::sec__}, we outline the proof format of our main results. Sections \ref{S2::sec} and \ref{S3::sec:str}  are devoted to our results (new bounds, most of which being bias-optimal) in the convex and strongly convex settings, respectively. In Section \ref{SA::sec:prox} we present the application of our results to the Stochastic Proximal algorithm. Section \ref{SNUM::sec} discusses our PEP framework and the numerical optimality of our variance terms. The technical proofs are gathered in the appendices.
% !TEX root = main.tex

\section{Preliminaries}\label{SA::sec:assumptions}

We first formalize the problem statement presented in Problem \eqref{LR2::eq:finite}.

\begin{problem}[Main problem]\label{Pb:main}
    Let $\mathcal H$ be a real Hilbert space with associated inner product $\langle \cdot, \cdot \rangle$ and induced norm $\|\cdot\|$.
    Let $\{f_i\}_{i\in \mathcal I}$ be a family of real-valued functions $f_i \colon \mathcal H \to \mathbb{R}$, where $\mathcal I$ is a (possibly infinite) set of indices.
    We consider the problem of minimizing $f\coloneqq \Exp{f_i}$, where the expectation is taken over the indices $i \in \mathcal I$, with respect to some probability distribution $\mathcal{D}$ over $\mathcal I$. We assume that
    \begin{enumerate}
        \item the problem is well-defined, in the sense that $i \mapsto f_i(x)$ is $\mathcal{D}$-measurable, and that $\Exp{f_i(x)}$ is finite for every $x \in \mathcal{H}$;
        \item the problem is well-posed, in the sense that ${\rm{argmin}}~f \neq \emptyset$;
        \item the problem is differentiable, in the sense that each $f_i$ is differentiable, and so is $f$, with $\nabla f(x) = \Exp{\nabla f_i(x)}$.
    \end{enumerate}
\end{problem}

Within the context of Problem \ref{Pb:main} the \eqref{S1::eq:SGD} algorithm is well defined. 
To analyze its performance, we will further assume the problem to be convex and smooth.
We define a setting combining both convexity and strong convexity, which allows us to unify parts of the analyses. 

\begin{assumption}[Smoothness and convexity]\label{Ass:smooth and mu convex}\label{Ass:convex smooth}\label{Ass:strongly convex smooth}
    Considering Problem \ref{Pb:main}, we assume that 
    \begin{itemize}
        \item there exists $L \in (0, + \infty)$ such that each   
        $f_i$ is $L$-smooth, i.e.
        $\nabla f_i\colon \mathcal H \to \mathcal H$ is $L$-Lipschitz continuous;
        \item there exists $\mu \in [0, +\infty)$ such that each  function $f_i$ is $\mu$-strongly convex. When $\mu=0$, this means that $f_i$ is convex. 
    \end{itemize}
\end{assumption}

Smoothness and strong convexity can be characterized by the means of a variational inequality, which will be central to the derivation of our results.

\begin{lemma}[Expected cocoercivity]\label{L:characterisation expected strong cocoercivity}\label{L:characterisation expected cocoercivity}
    	Let Assumption \ref{Ass:smooth and mu convex} hold.
    	Then, for every $(x,y) \in \mathcal H^2$,
	\begin{align}
    	\label{D:expected strong cocoercivity}  \tag{EC}
    	\frac{1}{2} \Exp{\Vert \nabla f_i(y) - \nabla f_i(x) \Vert^2} + \frac{\mu L}{2} \Vert y-x \Vert^2 
	\leq
    	(L- \mu) \left( f(y) - f(x) \right) +  \langle \mu \nabla f(y) - L \nabla f(x), y-x \rangle.
	\end{align}
\end{lemma}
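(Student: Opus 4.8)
The plan is to prove \eqref{D:expected strong cocoercivity} first pointwise, for each individual function $f_i$, and then to take the expectation over $i \sim \mathcal{D}$, using linearity of the expectation. The starting point is the auxiliary convex function $h_i \coloneqq f_i - \tfrac{\mu}{2}\|\cdot\|^2$: since $f_i$ is $\mu$-strongly convex and $L$-smooth, $h_i$ is convex and $(L-\mu)$-smooth, meaning that $\nabla h_i = \nabla f_i - \mu\,\mathrm{Id}$ is $(L-\mu)$-Lipschitz. (When $\mu = L$ the function $h_i$ is affine and this route degenerates; I would treat that case separately, noting each $f_i$ is then a quadratic with Hessian $L\,\mathrm{Id}$, for which \eqref{D:expected strong cocoercivity} holds with equality, and assume $\mu < L$ hereafter.) For a convex $(L-\mu)$-smooth function the standard improved descent inequality (see e.g. \cite{garrigos_handbook_2024}) gives, for all $x,y$,
\[
    h_i(y) \;\ge\; h_i(x) + \langle \nabla h_i(x),\, y-x\rangle + \frac{1}{2(L-\mu)}\,\| \nabla h_i(y) - \nabla h_i(x) \|^2 .
\]

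Next I would substitute back $h_i = f_i - \tfrac{\mu}{2}\|\cdot\|^2$ and $\nabla h_i = \nabla f_i - \mu\,\mathrm{Id}$ and expand. The purely position-dependent quadratic terms collapse through the identity $\tfrac{\mu}{2}(\|y\|^2 - \|x\|^2) - \mu\langle x, y-x\rangle = \tfrac{\mu}{2}\|y-x\|^2$, while expanding the squared norm on the right produces $\|\nabla f_i(y) - \nabla f_i(x)\|^2$, a cross term $-2\mu \langle \nabla f_i(y) - \nabla f_i(x),\, y-x\rangle$, and $\mu^2 \|y-x\|^2$. After rearranging, this yields the pointwise inequality
\[
    f_i(y) - f_i(x) - \langle \nabla f_i(x),\, y-x\rangle - \tfrac{\mu}{2}\|y-x\|^2
    \;\ge\; \frac{1}{2(L-\mu)}\,\| \nabla f_i(y) - \nabla f_i(x) - \mu(y-x) \|^2 .
\]

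Then I would take the expectation over $i$, invoking $\Exp{f_i} = f$ and $\Exp{\nabla f_i} = \nabla f$ from Problem~\ref{Pb:main} while keeping $\Exp{\|\nabla f_i(y) - \nabla f_i(x)\|^2}$ untouched, multiply through by $L - \mu > 0$, and collect terms. The inner products combine into $\langle \mu \nabla f(y) - L \nabla f(x),\, y-x\rangle$ and the $\|y-x\|^2$ contributions into $-\tfrac{\mu L}{2}\|y-x\|^2$; moving the squared-gradient term to the left-hand side gives exactly \eqref{D:expected strong cocoercivity}.

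I do not expect a genuine obstacle here. The only delicate points are: justifying the convexity and $(L-\mu)$-smoothness of $h_i$ purely from the Lipschitz-gradient definition in the Hilbert-space setting (and disposing of the degenerate case $\mu = L$); and carrying out the expansion exactly, since the underlying pointwise inequality is tight — equality is attained by quadratics whose Hessian equals $\mu\,\mathrm{Id}$ or $L\,\mathrm{Id}$ — so there is no slack available to absorb an algebraic slip.
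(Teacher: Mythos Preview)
Your proof is correct and follows essentially the same structure as the paper's: establish the pointwise inequality for each $f_i$, then take expectation. The only difference is that the paper cites \cite[Theorem~4]{taylor_smooth_2017} directly for the pointwise inequality, whereas you derive it from scratch via the shift $h_i = f_i - \tfrac{\mu}{2}\|\cdot\|^2$ and cocoercivity of convex smooth functions---which is precisely the standard route to that cited result.
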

\begin{proof}
    	Because each $f_i$ is $L$-smooth and $\mu$-strongly convex, we can use \citep[Theorem 4]{taylor_smooth_2017} and take an expectation over $i$.
\end{proof}

We now turn to the constant $\sigma_*^2$ which controls the complexity of the SGD algorithm.

\begin{assumption}[Finite solution variance]\label{Ass:bounded solution variance}
    	Considering Problem \ref{Pb:main}, we assume that the variance at the solution exists, namely that $\sigma^2_* \coloneqq \inf\limits_{x_* \in {\rm{argmin}}~f}\mathbb{E}\left[ \Vert \nabla f_i(x_*) \Vert^2 \right] < + \infty$.
\end{assumption}

We note that, under Assumption \ref{Ass:strongly convex smooth}, the variance term $\Exp{\Vert \nabla f_i(x_*) \Vert^2}$ does not depend on the choice of $x_* \in\argmin f$, as was already observed in \citep[Lemma 4.17]{garrigos_handbook_2024}.

\begin{lemma}
    	Let Assumption \ref{Ass:strongly convex smooth} hold. Then $\Exp{\Vert \nabla f_i(x_*) \Vert^2}$ is constant over ${\rm{argmin}}~f$.
\end{lemma}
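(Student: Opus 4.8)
The plan is to show that for any two minimizers $x_*, y_* \in {\rm{argmin}}~f$, we have $\Exp{\Vert \nabla f_i(x_*) \Vert^2} = \Exp{\Vert \nabla f_i(y_*) \Vert^2}$. Since both $x_*$ and $y_*$ are minimizers of $f$, we have $\nabla f(x_*) = \nabla f(y_*) = 0$, so in particular $\Exp{\nabla f_i(x_*)} = \Exp{\nabla f_i(y_*)} = 0$. The idea is to apply the variational inequality \eqref{D:expected strong cocoercivity} from Assumption \ref{Ass:expected cocoercivity star} with the pair $(x, x_*)$ chosen cleverly, namely taking $x = y_*$ and the reference minimizer to be $x_*$, and then also the symmetric pair.

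First I would invoke \eqref{D:EC*} with $x = y_*$ and minimizer $x_*$. Since $f(y_*) = f(x_*) = \min f$ and $\nabla f(y_*) = \nabla f(x_*) = 0$, the right-hand side $(L-\mu)(f(y_*) - f(x_*)) + \langle \mu \nabla f(y_*) - L \nabla f(x_*), y_* - x_* \rangle$ vanishes entirely. Hence
\begin{equation*}
    \frac{1}{2} \Exp{\Vert \nabla f_i(y_*) - \nabla f_i(x_*) \Vert^2} + \frac{\mu L}{2} \Vert y_* - x_* \Vert^2 \leq 0,
\end{equation*}
which forces $\Exp{\Vert \nabla f_i(y_*) - \nabla f_i(x_*) \Vert^2} = 0$ (and incidentally $x_* = y_*$ if $\mu > 0$, but we do not need that). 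Therefore $\nabla f_i(y_*) = \nabla f_i(x_*)$ almost surely, and in particular $\Exp{\Vert \nabla f_i(y_*) \Vert^2} = \Exp{\Vert \nabla f_i(x_*) \Vert^2}$.

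Actually, one subtlety: Assumption \ref{Ass:expected cocoercivity star} as stated requires \eqref{D:expected strong cocoercivity} to hold for $(x, x_*)$ for \emph{every} $x \in \mathcal{H}$ and \emph{every} $x_* \in {\rm{argmin}}~f$, so taking $x = y_*$ is legitimate since $y_* \in \mathcal{H}$. Expanding $\Exp{\Vert \nabla f_i(y_*) - \nabla f_i(x_*) \Vert^2} = \Exp{\Vert \nabla f_i(y_*)\Vert^2} - 2\langle \Exp{\nabla f_i(y_*)}, \Exp{\nabla f_i(x_*)}\rangle + \Exp{\Vert \nabla f_i(x_*)\Vert^2}$ would be an alternative route, but using $\Exp{\nabla f_i(x_*)} = \nabla f(x_*) = 0$ this reduces to $\Exp{\Vert \nabla f_i(y_*)\Vert^2} + \Exp{\Vert \nabla f_i(x_*)\Vert^2}$, which combined with the inequality above (and nonnegativity of the $\mu L$ term) gives $\Exp{\Vert \nabla f_i(y_*)\Vert^2} + \Exp{\Vert \nabla f_i(x_*)\Vert^2} \leq 0$; but that is too strong unless both are zero, so the cleaner path is the one through $\nabla f_i(y_*) = \nabla f_i(x_*)$ a.s. I expect no real obstacle here: the only thing to be careful about is that \eqref{D:EC*} is only assumed between a generic point and a minimizer, not between two arbitrary points, but since one of our two points is itself a minimizer the hypothesis applies directly. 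Thus $x_* \mapsto \Exp{\Vert \nabla f_i(x_*) \Vert^2}$ is constant on ${\rm{argmin}}~f$, as claimed.
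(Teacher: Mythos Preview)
Your main argument is correct and essentially identical to the paper's proof: apply \eqref{D:EC*} at the pair $(y_*,x_*)$, observe that the right-hand side vanishes because $f(y_*)=f(x_*)$ and $\nabla f(y_*)=\nabla f(x_*)=0$, deduce $\Exp{\Vert \nabla f_i(y_*)-\nabla f_i(x_*)\Vert^2}=0$, hence $\nabla f_i(y_*)=\nabla f_i(x_*)$ almost surely and the expected squared norms coincide. One small correction to your aside: the expansion you wrote is wrong, since the cross term is $-2\,\Exp{\langle \nabla f_i(y_*),\nabla f_i(x_*)\rangle}$ rather than $-2\langle \Exp{\nabla f_i(y_*)},\Exp{\nabla f_i(x_*)}\rangle$ (the two gradients share the same random index $i$, so you cannot split the expectation); you were right to discard that route.
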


\begin{proof}
    Let $x_*,x_*'$ be two minimizers of $f$.
    From Lemma \ref{L:characterisation expected cocoercivity} we know that \eqref{D:expected strong cocoercivity} holds true for the couple $(x_*,x_*')$, which means that
\begin{eqnarray*}
    \frac{1}{2} \Exp{\Vert \nabla f_i(x_*') - \nabla f_i(x_*) \Vert^2} 
    \leq 
    (L- \mu) \left( f(x_*') - f(x_*) \right) +  \langle \mu \nabla f(x_*') - L \nabla f(x_*'), y-x \rangle.
\end{eqnarray*}
    Because $f(x_*')=f(x_*)=\inf f$ and $\nabla f(x_*') = \nabla f(x_*)=0$, we deduce that
    \begin{equation*}
        \Exp{\Vert \nabla f_i(x_*') - \nabla f_i(x_*) \Vert^2} = 0.
    \end{equation*}
    Then, almost surely with respect to the distribution $\mathcal{D}$ over the indices $i \in \mathcal{I}$, we have $\Vert \nabla f_i(x_*') - \nabla f_i(x_*) \Vert^2 = 0$.
    This means that $\nabla f_i(x_*') = \nabla f_i(x_*)$ almost surely, from which we conclude that 
	$
        \Exp{\Vert \nabla f_i(x_*') \Vert^2}
        =
        \Exp{\Vert \nabla f_i(x_*) \Vert^2}
	$.
\end{proof}

Assuming that the variance at the solution $\sigma_*^2$ exists is trivially satisfied in the case of the minimization of a finite sum.

\begin{lemma}
    If Problem \ref{Pb:main} treats a finite sum of functions, i.e. $\mathcal I$ is finite, then Assumption \ref{Ass:bounded solution variance} holds true.
\end{lemma}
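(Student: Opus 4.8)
The plan is to observe that, for a finite index set, the expectation defining the solution variance reduces to a finite sum whose every term is a finite nonnegative real number. First I would use the well-posedness hypothesis of Problem~\ref{Pb:main} to ensure $\argmin f \neq \emptyset$, and fix an arbitrary minimizer $x_* \in \argmin f$. Then, since each $f_i$ is assumed differentiable on $\mathcal H$ (again part of Problem~\ref{Pb:main}), the gradient $\nabla f_i(x_*)$ is a well-defined element of $\mathcal H$ for each $i \in \mathcal I$, so that $\|\nabla f_i(x_*)\|^2$ is a finite nonnegative real number.

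Next, because $\mathcal I$ is finite and the expectation is taken over $i \in \mathcal I$ with respect to the distribution $\mathcal D$, one has
\[
    \Exp{\|\nabla f_i(x_*)\|^2} \;=\; \sum_{i \in \mathcal I} \mathcal D(\{i\})\,\|\nabla f_i(x_*)\|^2,
\]
a finite sum of finitely many finite, nonnegative numbers, hence finite. As $x_*$ was arbitrary in $\argmin f$, this is exactly \eqref{D:bounded solution variance}, so Assumption~\ref{Ass:bounded solution variance} holds. In the concrete setting of Problem~\eqref{LR2::eq:finite}, where $\mathcal D$ is uniform over $\{1,\dots,n\}$, this expression is just $\tfrac1n\sum_{i=1}^n\|\nabla f_i(x_*)\|^2$, recovering the formula for $\sigma_*^2$ stated in the introduction.

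There is no genuine obstacle here: the claim is essentially immediate, and the only point requiring a moment's care is to invoke the differentiability hypothesis of Problem~\ref{Pb:main} so that each $\nabla f_i(x_*)$, and therefore its squared norm, is guaranteed to exist and be finite before summing over the finitely many indices.
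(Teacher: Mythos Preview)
Your proposal is correct. The paper itself provides no proof for this lemma, treating it as immediate; your argument simply spells out the obvious reason (a finite sum of finite nonnegative reals is finite), which is exactly the intended justification.
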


Even for true expectation-minimization problems, where $\mathcal{I}$ is infinite, Assumption \ref{Ass:bounded solution variance} is very mild.
In particular, this assumption is automatically verified in problems where the loss functions $f_i$ are \emph{nonnegative}, which is standard for problems arising in inverse problems and machine learning.

\begin{lemma}
Consider Problem \ref{Pb:main} and assume that the functions $f_i$ are $L$-smooth and are uniformly bounded from below functions: there exists $r \in \mathbb{R}$ such that $f_i(x) \geq r$ for every $i \in \mathcal I$ and $x \in \mathcal H$.
Then Assumption \ref{Ass:bounded solution variance} is true, with moreover the information that
\begin{equation*}
     \sigma_*^2 \leq 2L (\min f - r).
\end{equation*}
\end{lemma}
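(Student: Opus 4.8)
The plan is to prove the bound $\sigma_*^2 \leq 2L(\min f - r)$ by combining the classical consequence of $L$-smoothness with nonnegativity (more precisely, lower-boundedness) of each $f_i$, applied at an arbitrary minimizer $x_*$ of $f$. The key observation is that for an $L$-smooth function $g$ that is bounded below by some constant, the squared gradient norm at any point is controlled by the gap between $g$ at that point and its lower bound. I would first establish this pointwise inequality, then specialize to $g = f_i$ at $x = x_*$, take expectations, and identify the resulting quantity with $\min f - r$.

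\textbf{Step 1: The smoothness gradient inequality.} For any $L$-smooth function $g \colon \mathcal H \to \R$ and any $x \in \mathcal H$, the descent lemma gives $g\bigl(x - \tfrac{1}{L}\nabla g(x)\bigr) \leq g(x) - \tfrac{1}{2L}\|\nabla g(x)\|^2$. If moreover $g(y) \geq r$ for all $y$, then applying this at $y = x - \tfrac1L \nabla g(x)$ yields $r \leq g(x) - \tfrac{1}{2L}\|\nabla g(x)\|^2$, i.e. $\|\nabla g(x)\|^2 \leq 2L\,(g(x) - r)$. This is the standard fact that a lower-bounded smooth function has gradients controlled by its suboptimality.

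\textbf{Step 2: Specialize and take expectations.} Fix any $x_* \in {\rm argmin}~f$. Apply Step 1 with $g = f_i$ (which is $L$-smooth by hypothesis and bounded below by $r$) at the point $x_*$, to get $\|\nabla f_i(x_*)\|^2 \leq 2L\,(f_i(x_*) - r)$ for every index $i$. Taking expectation over $i \sim \mathcal D$ and using that the problem is well-defined (so $\Exp{f_i(x_*)}$ is finite and equals $f(x_*) = \min f$), we obtain $\Exp{\|\nabla f_i(x_*)\|^2} \leq 2L\,(f(x_*) - r) = 2L(\min f - r)$. In particular the left-hand side is finite, so Assumption \ref{Ass:bounded solution variance} holds; and since the bound $2L(\min f - r)$ does not depend on the choice of $x_*$, taking the supremum over $x_* \in {\rm argmin}~f$ preserves it, giving $\sigma_*^2 \leq 2L(\min f - r)$.

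The proof is essentially routine, so there is no real obstacle. The only point requiring mild care is making sure the expectation in Step 2 is justified: one needs $i \mapsto f_i(x_*)$ to be measurable and integrable, which is exactly part (1) of Problem \ref{Pb:main}, together with the pointwise inequality $0 \leq \|\nabla f_i(x_*)\|^2 \leq 2L(f_i(x_*) - r)$ ensuring the squared-gradient-norm is integrable as well (dominated by an integrable function). One should also note $\min f - r \geq 0$, so the bound is vacuously consistent; indeed $f(x_*) = \Exp{f_i(x_*)} \geq r$ since each $f_i \geq r$.
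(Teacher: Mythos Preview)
Your proof is correct and follows essentially the same approach as the paper: both derive the pointwise inequality $\|\nabla f_i(x)\|^2 \leq 2L(f_i(x) - r)$ from $L$-smoothness and the lower bound, specialize to $x = x_*$, and take expectations. The paper cites this gradient bound as a known lemma while you rederive it from the descent lemma, and you add some extra care about integrability, but the argument is the same.
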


\begin{proof}
Since each $f_i$ is $L$-smooth, we can write \citep[Lemma 2.28]{garrigos_handbook_2024}, for all $x \in \mathcal H$,
    \begin{equation*}
        \Vert \nabla f_i(x) \Vert^2 
        \leq 2L(f_i(x) - \min f_i) 
        \leq 2L (f_i(x) - r).
    \end{equation*}
    Setting $x = x_* \in {\rm{argmin}}~f$ and taking the expectation of this inequality, we obtain
    \begin{equation*}
        \Exp{\Vert \nabla f_i(x_*) \Vert^2} \leq 2L (f(x_*) - r) = 2L (\min f - r) < + \infty,
    \end{equation*}
    which concludes since $\sigma_*^2=\Exp{\Vert \nabla f_i(x_*) \Vert^2} $.
\end{proof}

We now highlight the connection between the variance at the solution $\sigma^2_*$ with the notion of interpolation.
The following facts can be found in \cite{garrigos_handbook_2024}, which we have extended here from a finite-sum problem to our expectation-minimization Problem \ref{Pb:main}.

\begin{definition}[Interpolation]\label{D:interpolation}
    Consider Problem \ref{Pb:main}.
    We say that interpolation holds for the family of functions $\{f_i\}_{i \in \mathcal{I}}$ if 
    there exists $x_*$ which belongs to ${\rm{argmin}}~f_i$ for $\mathcal{D}$-almost every $i$.
\end{definition}

\begin{lemma}
\label{L:interpolation argmin and inf coincide}
    In the context of Problem \ref{Pb:main}, if interpolation holds, then $\Exp{\inf f_i} = \inf f$ and $\cap_{i \in \mathcal{I}} \ {\rm{argmin}}~f_i = {\rm{argmin}}~f$, where the intersection is to be understood for $\mathcal{D}$-almost every $i \in \mathcal{I}$.
\end{lemma}

\begin{proof}
    Because interpolation holds, we may select $x_* \in \cap_{i \in \mathcal{I}} \ {\rm{argmin}}~f_i$.
    Let us start by showing that $x_* \in {\rm{argmin}}~f$.
    For this, consider any $x \in \mathcal{H}$ and write
    \begin{equation*}
        f(x_*) = \Exp{ f_i(x_*)} = \Exp{\inf f_i} \leq \Exp{f_i(x)} = f(x).
    \end{equation*}
    This proves that $x_* \in {\rm{argmin}}~f$.
    Now let us prove the first part, by observing that
    \begin{equation*}
        \inf f = f(x_*) = \Exp{f_i(x_*)} = \Exp{\inf f_i}.
    \end{equation*}
    Secondly, we prove the second point, for which we are only left with the reverse inclusion.
    Consider some $x \in {\rm{argmin}}~f$.
    Then 
    \begin{equation*}
        f(x) = \inf f = \Exp{ \inf f_i} \ \implies \ \Exp{f_i(x) - \inf f_i} = 0.
    \end{equation*}
    Since $f_i(x) - \inf f_i \geq 0$ for all $i$, it must hold that $f_i(x) - \inf f_i = 0$ for almost every $i \in \mathcal{I}$, which proves the claim.
\end{proof}

\begin{lemma}
\label{L:interpolation and sigma}
    In the context of Problem \ref{Pb:main}, if interpolation holds, then $\sigma_*^2 = 0$. This becomes an equivalence if all the functions $f_i$ are convex.
\end{lemma}

\begin{proof}
    If interpolation holds, then for every $x_* \in {\rm{argmin}}~f$ we have that $x_* \in {\rm{argmin}}~f_i$ for almost every $i \in \mathcal{I}$.
    For such $i \in \mathcal{I}$, the optimality condition gives $\nabla f_i(x_*)=0$, which after taking the norm and expectation leads to $\Exp{\Vert \nabla f_i(x_*) \Vert^2} = 0$.

    Reciprocally, if we assume that $\sigma_*^2 = 0$, it means that there exists $x_* \in {\rm{argmin}}~f$ such that $\Exp{\Vert \nabla f_i(x_*) \Vert^2} = 0$.
    This is an expectation of nonnegative terms, so we deduce that $\nabla f_i(x_*) = 0$ for almost every $i \in \mathcal{I}$.
    Exploiting our convexity assumption, we deduce that $x_*$ is a minimizer of $f_i$ for almost every $i \in \mathcal{I}$, which shows that interpolation holds.
\end{proof}

% !TEX root = main.tex

\section{Reducing a Lyapunov Analysis to the Resolution of a System of Inequalities}\label{S3::sec__}

Our main approach to derive bounds for SGD 
is to rely on an analysis of a new parametrized family of Lyapunov energies.
After introducing these Lyapunov energies, we will show how to deduce bounds for SGD from a Lyapunov decrease in Section \ref{sec:bounds-decrease}.
In a second time, we present our main technical result which states that obtaining such Lyapunov decrease can be reduced to solving a system of inequalities (see Section \ref{sec:decrease-ineq}).

\subsection{Bounds for SGD from a Lyapunov Decrease}\label{sec:bounds-decrease}

Let $T \geq 1$ be fixed, let $x_* \in {\rm{argmin}}~f$, and let $(x_t)_{t=0}^{T}$ be generated by the SGD algorithm for some fixed step-size $\gamma >0$.
Given a set of parameters $\rho, a_0, \dots, a_T, e_0, \dots, e_{T-1} \ge 0$, we define the following Lyapunov energy, for $t = 0, \dots, T-1$: 
\begin{equation*} 
    E_t \coloneqq  
    a_t \Vert x_t - x_* \Vert^2
    + \rho \sum_{s=0}^{t-1} \left( f(x_s) - \min f \right)
    - \sum_{s=0}^{t-1} e_s\sigma_*^2,
\end{equation*}
where, by convention, the empty sum $\sum_{s=0}^{-1}$ is equal to zero.

The first term of this Lyapunov energy is the distance to the solution $\Vert x_t - x_* \Vert^2$, a classical term which typically decreases for deterministic monotone gradient dynamics.
The second term involves the function gap $f(x_t) - \min f$, which also typically decreases for gradient descent.
The standard Lyapunov for gradient descent usually contains the term $t(f(x_t) - \min f)$, 
and such term was also considered for analyzing SGD in \cite{taylor_stochastic_2019}.
Here we make a slightly different choice, replacing this term with the sum of the past function gaps (observe that both are of the same order in time).
The last term for this Lyapunov energy is a \emph{negative} cumulated sum, where the $e_t$'s measure the contribution of the variance.
This term is meant to compensate the fluctuations caused by the uncertainty in the SGD algorithm, and will allow the Lyapunov energy to decrease.

A first result consists in deriving bounds on the average function value gap, provided the Lyapunov parameter $\rho$ is nonzero.
This bound will typically be obtained for convex smooth problems. The proof is based on a telescopic sum and an application of Jensen's inequality, as is often the case for convex smooth objectives, as done for instance in \cite{taylor_stochastic_2019,gower_sgd_2021}.

\begin{lemma}[Bound from Lyapunov decrease. Convex case]\label{L:SGD bound from lyapunov decrease}
    Consider Problem \ref{Pb:main}, assume that $f$ is convex, and let $T \geq 1$.
    Assume that $\Exp{E_{t+1}} \leq \Exp{E_t}$ for every $t=0, \dots, T-1$, and that $\rho > 0$. 
    Then, if we note $\bar e = \tfrac{1}{T}\sum_{t=0}^{T-1} e_t$ and $\bar x_T = \tfrac{1}{T}\sum_{t=0}^{T-1} x_t$,
    \begin{equation*}
        \mathbb{E} \left[ f(\bar x_T) - \min f \right]
        \leq 
        \frac{1}{T}\sum_{t=0}^{T-1}\Exp{f(x_t)-\min f}
        \leq 
        \frac{a_0 \Vert x_0 - x_* \Vert^2}{\rho T} + \frac{\bar e \sigma_*^2}{\rho}.
    \end{equation*}
\end{lemma}

\begin{proof}
    Iterating $\Exp{E_{t+1}} \leq \Exp{E_t}$ for $t=0, \dots, T-1$ yields that $\Exp{E_T}\le E_0$, or, equivalently,
    \[
        a_T\Exp{\|x_T-x_*\|^2}+\rate \sum_{t=0}^{T-1}\Exp{f(x_t)-\min f}-\sum_{t=0}^{T-1}e_t\sigma_*^2\le a_0\|x_0-x_*\|^2.
    \]
    Bounding $a_T\Exp{\|x_T-x_*\|^2}\ge 0$ and dividing by $\rate T>0$ yields
    \[
        \mathbb{E}\left[ f(\bar x_T) - \min f \right]
        \le \frac1T\sum_{t=0}^{T-1}[f(x_t)-\min f]\le \frac{a_0\|x_0-x_*\|^2}{\rate T}+\frac{\bar e \sigma_*^2}{\rate},
    \]
    where in the first inequality we used Jensen's inequality.
\end{proof}

As a second result, a Lyapunov decrease can yield a bound of the square distance of the iterate $x_t$ to a minimizer, provided the Lyapunov parameter $a_T$ is nonzero.
This is standard for strongly convex problems, as we will see in Section \ref{S3::sec:str}. The proof, also based on a telescopic argument,  is again typical for this class of problems, see for instance \cite{gower_sgd_2019}.

\begin{lemma}[Bound from Lyapunov decrease. Strongly convex case]\label{L:SGD bound from lyapunov decrease strongly convex}
    Consider Problem \ref{Pb:main}, and let $T \geq 1$.
    Assume that $\Exp{E_{t+1}} \leq \Exp{E_t}$ for every $t=0, \dots, T-1$, and that $a_{T} >0$.
    Then, if we note $e_T^{sum} = \sum_{t=0}^{T-1} e_t$,
    \begin{equation*}
        \Exp{\Vert x_T - x_* \Vert^2} 
        \leq 
        \frac{a_0 \Vert x_0 - x_* \Vert^2}{a_T} + \frac{ e_T^{sum} \sigma_*^2}{a_T}.
    \end{equation*}
\end{lemma}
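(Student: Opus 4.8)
The plan is to mirror the proof of Lemma~\ref{L:SGD bound from lyapunov decrease}, simply discarding a different nonnegative term at the end. First I would iterate the hypothesis $\Exp{E_{t+1}} \leq \Exp{E_t}$ over $t = 0, \dots, T-1$ to obtain $\Exp{E_T} \leq E_0$. Unfolding the definition of the Lyapunov energy on both sides, this reads
\begin{equation*}
    a_T \Exp{\Vert x_T - x_* \Vert^2} + \rho \sum_{s=0}^{T-1} \Exp{f(x_s) - \min f} - \sum_{s=0}^{T-1} e_s \sigma_*^2 \leq a_0 \Vert x_0 - x_* \Vert^2,
\end{equation*}
using that $E_0 = a_0 \Vert x_0 - x_* \Vert^2$ since the empty sums vanish by convention.

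Next I would use that $\rho \geq 0$ and that $f(x_s) - \min f \geq 0$ for every $s$, so the middle sum $\rho \sum_{s=0}^{T-1} \Exp{f(x_s) - \min f}$ is nonnegative and can be dropped from the left-hand side. This leaves
\begin{equation*}
    a_T \Exp{\Vert x_T - x_* \Vert^2} \leq a_0 \Vert x_0 - x_* \Vert^2 + \sum_{s=0}^{T-1} e_s \sigma_*^2 = a_0 \Vert x_0 - x_* \Vert^2 + e_T^{sum} \sigma_*^2,
\end{equation*}
by the definition $e_T^{sum} = \sum_{t=0}^{T-1} e_t$. Finally, dividing by $a_T > 0$ yields the claimed bound.

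There is essentially no obstacle here: the only points requiring a word of care are that $E_T$ is indeed defined (it is, by the convention on the range of indices and the discrete-derivative lemma), and that the dropped term is genuinely nonnegative, which needs $\rho \geq 0$ and the nonnegativity of the function gaps (both available by hypothesis and the definition of $\min f$). Unlike the convex case, no convexity of $f$ or Jensen's inequality is needed, since here the quantity of interest is the last iterate $\Vert x_T - x_* \Vert^2$, which appears directly with coefficient $a_T$ in $E_T$.
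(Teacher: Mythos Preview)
Your proof is correct and essentially identical to the paper's: iterate the Lyapunov decrease to get $\Exp{E_T}\le E_0$, drop the nonnegative term $\rho\sum_{s=0}^{T-1}\Exp{f(x_s)-\min f}$, and divide by $a_T>0$. The paper's argument is exactly this, with the same justification that $\rho\ge 0$ and $f(x_s)-\min f\ge 0$.
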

\begin{proof}
    Iterating $\Exp{E_{t+1}} \leq \Exp{E_t}$ for $t=0, \ldots, T-1$ yields $\Exp{E_T}\le E_0$, or in other words that
    \[
        a_T\Exp{\|x_T-x_*\|^2}+\rate\sum_{t=0}^{T-1}\Exp{f(x_t)-\min f}\le a_0\|x_0-x_*\|^2 + e_T^{sum}\sigma_*^2.
    \]
    Since $f(x_t) - \min f \geq 0$ for all $t=0, \ldots, T-1$, and $\rho\ge 0$, this yields the wanted inequality after division by $a_T>0$.
\end{proof}

\subsection{Lyapunov Decrease from a System of Inequalities}\label{sec:decrease-ineq}

Lemmas \ref{L:SGD bound from lyapunov decrease} and \ref{L:SGD bound from lyapunov decrease strongly convex} reduce the problem of finding upper bounds for SGD to finding parameters $\rho, a_0, \dots, a_T, e_0, \dots, e_{T-1} \ge 0$ that allow the Lyapunov energy to decrease.
The next theorem provides sufficient conditions for this to be the case, and will be at the heart of our subsequent results.
Indeed, this will allow to shift the focus from sequences generated by SGD to a system of inequalities on real numbers. 

\begin{theorem}[Sufficient conditions for Lyapunov decrease]\label{T:lyapunov sufficient conditions strongly convex}\label{T:lyapunov sufficient conditions}
    Let Assumptions \ref{Ass:strongly convex smooth} and \ref{Ass:bounded solution variance} hold true, and let $T \geq 1$.
    Assume that there exist parameters $(\alpha_t, \beta_t)_{t=0}^{T-1}$ such that, for every $t=0, \dots, T-1$, the following conditions are verified:
    \begin{enumerate}
        \item\label{T:lyapunov sufficient conditions strongly convex:positivity}\label{T:lyapunov sufficient conditions:positivity}\label{T:Lyap:1}
        $\rho, a_t, e_t, \alpha_t, \beta_t \geq 0$,
        \item\label{T:lyapunov sufficient conditions strongly convex:rho}\label{T:lyapunov sufficient conditions:rho} \label{T:lyapunov sufficient conditions:rho upper bound} \label{T:Lyap:2}
        $\rho \leq 2(L-\mu)(\alpha_t - \beta_t)$,
        \item\label{T:lyapunov sufficient conditions strongly convex:monotone at}\label{T:lyapunov sufficient conditions:monotone at} \label{T:Lyap:3}
        $a_{t+1} \leq \mu L (\alpha_t + \beta_t) +  a_t $, 
        \item\label{T:lyapunov sufficient conditions strongly convex:annoying}\label{T:lyapunov sufficient conditions:annoying} \label{T:Lyap:4}
        $a_{t+1} \gamma^2 \leq \alpha_t + \beta_t$,
        \item\label{T:lyapunov sufficient conditions strongly convex:curious}\label{T:lyapunov sufficient conditions:curious} \label{T:Lyap:5}
        $(a_{t+1}\gamma - \alpha_t L - \beta_t \mu )^2 \leq (\mu L (\alpha_t + \beta_t) + a_t - a_{t+1})(\alpha_t + \beta_t - a_{t+1}\gamma^2)$,
        \item\label{T:lyapunov sufficient conditions strongly convex:variance}\label{T:lyapunov sufficient conditions:variance} \label{T:Lyap:6}
        $a_{t+1} \gamma^2 (\alpha_t+\beta_t) \leq (\alpha_t + \beta_t - a_{t+1}\gamma^2)e_t$.
    \end{enumerate}
    Then $\Exp{E_{t+1}} \leq \Exp{E_t}$ for every $t=0, \ldots, T-1$. 
\end{theorem}

\begin{proof}
    Using the definition of $x_{t+1}$ allows us to write
    \begin{eqnarray}
         E_{t+1} - E_t \label{eq:lyapunov derivative full} 
         =
         \rho \left( f(x_t) - \min f \right)
         - e_t \sigma_*^2
         +
        (a_{t+1}  - a_t) \Vert x_t - x_* \Vert^2 
        + a_{t+1}\gamma^2 \Vert \nabla f_i(x_t) \Vert^2 - 2 a_{t+1}\gamma \langle \nabla f_i(x_t), x_t - x_* \rangle.
    \end{eqnarray}
    In what follows, we denote by $\mathbb{E}_t \coloneqq  \Exp{ \cdot \ | \ x_0, \dots, x_t }$ the conditional expectation with respect to the iterates up to and including $x_t$.
    Multiplying the expected cocoercivity inequalities (Lemma \ref{L:characterisation expected cocoercivity}) for the pair $(x_t, x_*)$ by $2\alpha_t \geq 0$ and for the pair $(x_*, x_t)$ by $2\beta_t \geq 0$, and summing them gives
    \begin{eqnarray*}
        && ({\alpha_t + \beta_t}) \Expt{\Vert \nabla f_i(x_t) - \nabla f_i(x_*) \Vert^2} + {\mu L(\alpha_t + \beta_t)} \Vert x_t-x_* \Vert^2 \\
        &\leq &
        2(L- \mu)(\beta_t - \alpha_t) \left(  f(x_t) - \min f \right) + 2(\alpha_t L + \beta_t \mu ) \langle  \nabla f(x_t), x_t-x_* \rangle.
    \end{eqnarray*}
    Adding $\Expt{E_{t+1}} - E_t$ on both sides and using the expression obtained in \eqref{eq:lyapunov derivative full}, taking the expectation and developing the squares yields
    \begin{eqnarray*}
    \Exp{E_{t+1} - E_t} 
        &\leq &
        (\rho + 2(L-\mu)(\beta_t - \alpha_t)) \mathbb{E}\left[ f(x_t) - \min f\right] \\
        && + \Exp{A {\Vert x_t - x_* \Vert^2} + B {\Vert \nabla f_i(x_t) \Vert^2 } + B' {\Vert \nabla f_i(x_*) \Vert^2}
         + 
        2C {\langle \nabla f_i(x_t), x_t - x_* \rangle
        } +
        2 D {\langle \nabla f_i(x_t), \nabla f_i(x_*) \rangle,
        }}     
    \end{eqnarray*}
    where we simplified the expression by introducing the following constants
    \begin{equation*}
        \begin{cases}
            A &= \mu L (\alpha_t + \beta_t ) + a_t - a_{t+1}, \\
            B &= \alpha_t + \beta_t - a_{t+1}\gamma^2, \\
            B' &= e_t + \alpha_t + \beta_t, \\
            C &= a_{t+1}\gamma - \alpha_t L - \beta_t \mu, \\
            D &= -(\alpha_t+\beta_t).
        \end{cases}
    \end{equation*}
    For the Lyapunov energy to be decreasing in expectation, it is enough that the right-hand side is nonpositive.
    For the first term, because $f(x_t) - \min f \geq 0$, it is enough to assume that 
    $\rho \leq 2(L-\mu)(\alpha_t - \beta_t)$, which corresponds to Condition \ref{T:lyapunov sufficient conditions strongly convex:rho}.
    The remaining terms are simply the expectation of a quadratic polynomial in $X = x_t - x_*$, $Y = \nabla f_i(x_t)$ and $Y' = \nabla f_i(x_*)$.
    Using elementary linear algebra, it can be shown (see Lemma \ref{L:polynomial quadratic nonnegative conditions} below) that for this polynomial to be nonnegative, it is enough to require
    \begin{equation*}
        A, B, B' \ge 0, \quad C^2\le AB, \quad \text{and}\quad D^2\le BB'.
    \end{equation*}
    A simple calculation shows that these conditions correspond exactly to the remaining conditions of our theorem. 
\end{proof}

\begin{remark}
    We note that Theorem \ref{T:lyapunov sufficient conditions}, which is the key theorem for all of our subsequent results, holds provided that the inequality of Lemma \ref{L:characterisation expected cocoercivity} holds for any pair of points $(x, x_*)$ and $(x_*, x)$, where $x\in \mathcal H$. As such, we could replace Assumption \ref{Ass:strongly convex smooth} by such a weaker version, which includes a class of nonconvex functions. Moreover, in the case where $\beta_t\equiv 0$, it is only necessary for the conclusion of Lemma \ref{L:characterisation expected cocoercivity} to hold for any pair $(x, x_*)$, where $x\in \mathcal H$. We note the similarities with the recently introduced expected smoothness assumption \cite{gower_stochastic_2021}. Although this nonconvex extension is of interest, we should not delve further into it in this work.
\end{remark}

We end this section by proving the technical lemma used the previous proof.

\begin{lemma}[Nonnegative quadratic polynomial]\label{L:polynomial quadratic nonnegative conditions}
    Let $A,B,B',C,D \in \mathbb{R}$ be such that 
    \begin{equation}\label{SA::eq:conds}
        A, B, B' \geq 0,\quad C^2 \leq AB, \quad \text{and}\quad  D^2 \leq  BB'.
    \end{equation}
    Let $X,Y,$ and $Y'$ be three random variables over $\mathcal H$ such that $\mathbb{E}\left[ Y' \right]=0$.
    Then
    \begin{eqnarray*}
        \Exp{
        A  \Vert X \Vert^2
        + 
        B \Vert Y \Vert^2
        + 
        B'  \Vert Y' \Vert^2
        +
        2C  \langle Y, X \rangle
        -
        2D  \langle Y, Y' \rangle 
        }
        \geq 0.
    \end{eqnarray*}
\end{lemma}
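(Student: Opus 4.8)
The plan is to write the random quantity in the statement as a sum of two quadratic forms, one deterministic and one pathwise nonnegative, each carrying a positive semidefinite $2\times 2$ coefficient matrix; the hypothesis $\mathbb{E}[Y']=0$ enters only to \emph{recenter} the cross term $\langle Y,Y'\rangle$. Indeed, since $\mathbb{E}[Y']=0$, the inner product of any deterministic vector with $Y'$ has zero expectation, so $\mathbb{E}[\langle Y,Y'\rangle]=\mathbb{E}[\langle Y-\mathbb{E}[Y],Y'\rangle]$. It suffices --- and this is the only regime in which the lemma is used --- to prove the statement when $X$ is a fixed element of $\mathcal H$ (in the application $X=x_t-x_*$ and the expectation is the conditional one given $x_0,\dots,x_t$). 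A realisation-by-realisation argument is \emph{not} available: the $3\times 3$ matrix underlying the form (diagonal $A,B,B'$, with $C$ coupling the $X$- and $Y$-slots and $-D$ coupling the $Y$- and $Y'$-slots) need not be positive semidefinite under \eqref{SA::eq:conds}, its determinant $ABB'-AD^2-C^2B'$ being possibly negative.

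Writing $\bar Y\coloneqq\mathbb{E}[Y]$, the bias--variance identity $\mathbb{E}\|Y\|^2=\|\bar Y\|^2+\mathbb{E}\|Y-\bar Y\|^2$ together with the recentering identity turns the expectation in the statement into
\[
    \big(A\|X\|^2+2C\langle X,\bar Y\rangle+B\|\bar Y\|^2\big)+\mathbb{E}\big[B\|Y-\bar Y\|^2-2D\langle Y-\bar Y,Y'\rangle+B'\|Y'\|^2\big].
\]
The first group is a quadratic form in the pair $(X,\bar Y)\in\mathcal H^2$ whose coefficient matrix has diagonal $(A,B)$ and off-diagonal $C$; by \eqref{SA::eq:conds} (namely $A,B\ge 0$ and $AB\ge C^2$) this matrix is positive semidefinite, so the group is $\ge 0$. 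In the second group the integrand is, pointwise, a quadratic form in $(Y-\bar Y,Y')$ whose coefficient matrix has diagonal $(B,B')$ and off-diagonal $-D$; this matrix is positive semidefinite because $B,B'\ge 0$ and $BB'\ge D^2$, so the integrand is nonnegative and its expectation is $\ge 0$. Summing the two nonnegative contributions yields the claim.

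The one elementary fact I would isolate is that for a positive semidefinite symmetric matrix $N=(n_{ij})_{i,j\le 2}$ and vectors $u,v$ in a Hilbert space one has $n_{11}\|u\|^2+2n_{12}\langle u,v\rangle+n_{22}\|v\|^2\ge 0$: expand $u$ and $v$ in an orthonormal basis and apply the scalar inequality coordinate by coordinate (equivalently, this expression equals $\Tr(NG)\ge 0$ with $G$ the positive semidefinite Gram matrix of $\{u,v\}$). The main obstacle is conceptual rather than computational --- recognising that one cannot argue pathwise and must instead exploit $\mathbb{E}[Y']=0$ together with the fact that $X$ is frozen under the relevant (conditional) expectation; neither ingredient can be dropped, as seen already on $\mathbb R$ with $A=B=B'=C=D=1$, $Y'=\pm 1$ equiprobable, $Y=Y'$ and $X=-Y'$, for which the left-hand side equals $-1$. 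Everything else is the bookkeeping of checking positive semidefiniteness of the two $2\times2$ matrices from \eqref{SA::eq:conds} and verifying the recentering identity.
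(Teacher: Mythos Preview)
Your argument is correct and takes a genuinely different, more elementary route than the paper. The paper exploits $\mathbb{E}[Y']=0$ to add a free term $2\theta\,\mathbb{E}[\langle X,Y'\rangle]$ and then shows, via Sylvester's criterion and a fairly involved algebraic verification, that some $\theta\in\mathbb{R}$ makes the full $3\times 3$ coefficient matrix (with $\theta$ in the $(X,Y')$ slot) positive semidefinite. You instead recenter $Y$ about its mean and split the expression into two $2\times 2$ quadratic forms --- one in $(X,\bar Y)$ governed by $\begin{psmallmatrix}A&C\\ C&B\end{psmallmatrix}$, one in $(Y-\bar Y,Y')$ governed by $\begin{psmallmatrix}B&-D\\ -D&B'\end{psmallmatrix}$ --- whose positive semidefiniteness is read off directly from \eqref{SA::eq:conds}; this bypasses all the determinant manipulations. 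You are also right that the lemma, as literally stated with $X$ random, is false (your counterexample is valid), and that the paper's own proof tacitly uses $\mathbb{E}[\langle X,Y'\rangle]=0$ at its very first step, which requires $X$ deterministic or independent of $Y'$; in the application this holds because $X=x_t-x_*$ is fixed under the conditional expectation $\mathbb{E}_t$ (equivalently, independent of $Y'=\nabla f_{i_t}(x_*)$ under the full expectation). What the paper's route buys is a pointwise inequality $\hat P_\theta\ge 0$ once $\theta$ is found, whereas your decomposition is intrinsically an in-expectation argument; in return yours is shorter and conceptually cleaner.
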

    \begin{proof}
    Let us note $P$ to be the quantity that we want to see nonnegative.
    We start by exploiting the fact that $\Exp{Y'}=0$ to write that $P = P + \theta \Exp{\langle X,Y' \rangle }$, for every $\theta \in \mathbb{R}$.
    Now, see that for $P \geq 0$ to be true, it is enough for
    \begin{equation*}
        \hat P_\theta(x,y,y') = A  \Vert x \Vert^2
        + 
        B  \Vert y \Vert^2
        + 
        B ' \Vert y' \Vert^2
        +
        2C \langle y, x \rangle
        -
        2D  \langle y, y' \rangle
        + 2 \theta  \langle x,y' \rangle
    \end{equation*}
    to be nonnegative for every $x,y,y' \in \mathcal H$ and for some $\theta  \in \mathbb{R}$.
    Our polynomial $\hat P_\theta$ is equal to $\langle M_\theta z,z \rangle$ where $z=(x,y,y')$ and 
    \begin{equation*}
        M_\theta = 
        \begin{pmatrix}
        A  & C & \theta  \\
        C  & B  & - D \\
        \theta  & - D & B '
        \end{pmatrix}.
    \end{equation*}
    As such, $\hat P_\theta\ge 0$ if, and only if, $M_\theta\succeq 0$, which, by Sylvester's criterion, is equivalent to
    \begin{subequations}\label{SA::eq:M}
        \begin{align}
            & A, B, B' \geq 0, \label{SA::eq:M1}\\
            & C^2\le AB, \quad \theta^2 \le AB', \quad D^2\le BB', \label{SA::eq:M2} \\
            & A (BB' - D^2) \geq B'C^2 + B\theta^2 + 2CD\theta.  \label{SA::eq:M3}
        \end{align}
    \end{subequations}
    So $P \geq 0$ if there exists $\theta  \in \mathbb{R}$ such that Inequalities \eqref{SA::eq:M} are verified.
    We can already see Equations \eqref{SA::eq:M1}-\eqref{SA::eq:M2} imply Equations \eqref{SA::eq:conds}.
    Let us prove that in fact the existence of a $\theta\in \R$ satisfying Equations \eqref{SA::eq:M} is equivalent to Equations \eqref{SA::eq:conds}.
    
    To prove equivalence, we assume Equations \eqref{SA::eq:conds} and show that there exists a $\theta\in \R$ such that
    \begin{equation}\label{SA::eq:exists}
        AB' \geq \theta ^2 \quad \text{ and } \quad A (BB' - D^2) \geq B'C^2  + B\theta ^2 + 2 CD\theta.
    \end{equation}
    The last inequality is quadratic in $\theta$, and has solutions if, and only if, its discriminant $\Delta$ is nonnegative. A straightforward computation shows that
    \[
        \Delta = 4(BB' - D^2)(AB-C^2),
    \]
    which guarantees that $\Delta \ge 0$ given Equations \eqref{SA::eq:conds}. The quadratic may hence be rewritten as
    \begin{equation*}
        \frac{-2CD - \sqrt{\Delta}}{2B}
        \leq \theta  \leq \frac{-2CD + \sqrt{\Delta}}{2B}.
    \end{equation*}
    For \eqref{SA::eq:exists} to have a solution, the above must hold simultaneously with $-\sqrt{AB'} \leq \theta \leq \sqrt{AB'}$.
    This is equivalent to the intersection of two intervals to be nonempty, which is the case if
    \begin{equation*}
        \begin{cases}
            \frac{-2CD + \sqrt{\Delta}}{2B} \ge -\sqrt{AB'} \quad &\text{if $CD\ge 0$}, \\
            \frac{-2CD - \sqrt{\Delta}}{2B} \le \sqrt{AB'} \quad &\text{if $CD\le 0$}.
        \end{cases}
    \end{equation*}
    or equivalently,
    \begin{eqnarray*}
        2|CD| \leq 2 B \sqrt{AB'} + \sqrt{\Delta}.
    \end{eqnarray*}
    As all terms are nonnegative, this inequality is equivalent to
    \begin{equation*}
        4 C^2D^2 \leq 4 B^2 AB' + 4 B \sqrt{AB' \Delta} + \Delta,
    \end{equation*}
    which holds true since $D^2 \leq B B'$, $C^2 \leq AB$ and $\Delta\ge 0$.
\end{proof}
% !TEX root = main.tex

\section{Bias-Optimal Bounds in the Convex Smooth Setting}\label{S2::sec}\label{S2::sec:res}

We now present bounds for SGD that follow directly from Lemma \ref{L:SGD bound from lyapunov decrease}.
These bounds are obtained by identifying specific Lyapunov parameters that satisfy the sufficient conditions of Theorem \ref{T:lyapunov sufficient conditions}, with the objective of maximizing the value of $\rate$, which controls the bias term in Lemma \ref{L:SGD bound from lyapunov decrease}.
Our approach proceeds in two stages. First, using numerical tools (see Section \ref{SNUM::sec} for further details), we determined the maximal admissible value of $\rate$. In a second time, we established a formal proof demonstrating that this value indeed yields an optimal bias term.
We note that the Lyapunov parameters $e_t$ arising from these proofs are at times rather intricate. For clarity of exposition, we therefore present bounds based on simplified parameter choices in this section. The complete, more detailed bounds, as well as the full proofs and the underlying motivation for the parameter selection, are deferred to Appendix \ref{SA::sec:conv_proof}.

The presentation of our results is split into three parts based on the normalized step-size $\gamma L$, namely Theorem \ref{S2::thm:main_small} for short step-sizes $\gamma L\in (0, 1)$, Theorem \ref{S2::thm:main_1} for the critical step-size $\gamma L=1$, and Theorem \ref{S2::thm:main_large} for large step-sizes $\gamma L\in (1, 2)$. 

\subsection{Short Step-Sizes $\gamma L <1$}\label{S3b::ssec:short}

\begin{theorem}[Convex case, short step-sizes]\label{S2::thm:main_small}
    Let Assumptions \ref{Ass:strongly convex smooth} and \ref{Ass:bounded solution variance} hold true with $\mu=0$,
    and
    let $(x_t)$ be generated by \eqref{S1::eq:SGD} with $\gamma L \in (0,1)$.
    Then, for every $T \geq 1$, with $\bar x_T = \tfrac{1}{T}\sum_{t=0}^{T-1} x_t$,
    \begin{equation*}
        \Exp{f(\bar x_T) - \min f }
        \leq 
        \frac{1}{T}\sum_{t=0}^{T-1} \mathbb{E} \left[ f(x_t) - \min f \right]
        \leq 
        \frac{L \Vert x_0 - x_* \Vert^2}{2 \gamma L T + 2(1 - \gamma L)} + \frac{\gamma \sigma_*^2}{2(1 - \gamma L)}.
    \end{equation*}
    Moreover, the above bound is bias-optimal, in the sense that there exists a problem for which $\sigma_*^2=0$ and both inequalities are equalities.
\end{theorem}

\begin{proof}
    The first inequality follows from Jensen's inequality.
    The second follows from Lemma \ref{L:SGD bound from lyapunov decrease} and Theorem \ref{T:lyapunov sufficient conditions strongly convex}, where in the latter we select the following parameters:
    \[
        \rho = 2 \gamma + \frac{2(1 - \gamma L)}{LT}, \quad a_t = \frac{T-t}{T} \frac{1+ \gamma L (T-1) }{1 + \gamma L(T-t-1)}, \quad 
        e_t = \frac{\gamma^2 }{1-\gamma L}, \quad 
        \alpha_t \equiv \alpha =\frac{\rho}{2L}, \quad \beta_t \equiv \beta =0.
    \]
    A detailed proof of the validity of those parameters, including the motivation for their choice, is available in Appendix \ref{SA::sec:proof_short}.
    Regarding the bias-optimality, we consider the convex and $L$-smooth Huber function, parameterized by $\eta>0$,
    \begin{align*}
    \mathcal{H}_{\eta}(x) = \begin{cases}
        \eta L \|x\| - \frac{L}{2}\eta^2 &\text{if} \quad \|x\| > \eta \\
        \frac{L}{2}\|x\|^2 &\text{if} \quad \|x\| \leq \eta.
    \end{cases}
    \end{align*}
    When $\eta$ is sufficiently small, namely $\eta \le\frac{\Vert x_0 \Vert}{1+(T-1)\gamma L}$, it can be shown that the first $T$ iterates remain within the region where $\mathcal H_\eta$ is linear. In this setting, a direct computation yields
    \[
    \mathcal{H}_{\eta}(\overline{x}_T) - \inf \mathcal{H}_{\eta} = \frac{L \|x_0-x^*\|^2}{2 (1+(T-1)\gamma L)},
    \]
    which matches the bias term of Theorem \ref{S2::thm:main_small}. The full argument is provided in the proof of Proposition \ref{SC::prop:GD}. 
    \end{proof}

\begin{remark}[Related works]
    Theorem \ref{S2::thm:main_small} provides a better bound, both in terms of bias and variance, than what can be found in the literature, while also extending the range of step-sizes for which the bounds apply.
    For instance, \cite[Theorem D.6]{gower_sgd_2021} requires $\gamma L\in (0, \frac{1}{2})$ and derives the bound
        \[
           f(\bar x_T)-\inf f \le \frac{\|x_0-x_*\|^2}{2\gamma (1-2\gamma L)T}+\frac{\gamma}{1-2\gamma L} \sigma_*^2.
        \]
        Note that the bias term blows up when $\gamma L$ approaches the upper bound $\tfrac{1}{2}$, which is not the case for our bound when $\gamma L \to 1$.
        However, they rely on weaker assumptions than ours, namely the smoothness of $f$ (instead of each $f_i$) and the quasar-convexity of the $f_i$ (instead of the convexity), and cover the use of variable step-size, which we do not address in this work, see also \cite[Theorem 5.3]{garrigos_handbook_2024}.
    A less straightforward comparison can be made with \cite[Theorem 15]{taylor_stochastic_2019}, where the authors investigate a primal averaging variant of SGD, which at every iteration evaluates the gradient at $\bar x_t$ instead of $x_t$. For $\gamma L\in (0, 1)$, they establish the upper bound
        \[
            \frac{\|x_0-x_*\|^2}{ 2\gamma T}+\frac{\gamma}{2(1-\gamma L)} \sigma_*^2.
        \]
    As our bound is strictly better, although asymptotically equivalent, we see that we do not need to rely on primal averaging to obtain such bound.
\end{remark}

\begin{remark}[About the bounded variance assumptions]
    Our results indicate that the commonly imposed assumptions of uniformly bounded variance or uniformly bounded gradients are {not necessary} to obtain improved complexity guarantees for SGD. In particular, the worst-case function arising in the proof of Theorem \ref{S2::thm:main_small} has bounded gradients, implying that the bias term cannot be improved by assuming gradient boundedness. Furthermore, \cite[Theorem 6]{taylor_stochastic_2019} shows that, under the assumption of uniformly bounded variance and for step-sizes satisfying $\gamma L \in (0, \tfrac{1+\sqrt{5}}{2}]$, one obtains the upper bound
    \begin{equation*}
        \frac{\|x_0-x_*\|^2}{2 \gamma T} + \gamma \frac{1+L}{2} \sigma^2,
    \end{equation*} 
    which is looser than the bound established in our work.
\end{remark}

\subsection{Critical Step-Size $\gamma L =1$}\label{sec::crit_step_size}

We now turn to the case $\gamma L=1$, which we call the \textit{critical} step-size, because of the sudden qualitative change in behavior.
While standard for GD, this case has surprisingly not been included in any complexity rates for SGD without bounded variance assumptions. 
The only result we are aware of is \cite[Theorem 3]{DOLoiLarMit23a}, which provides a bound for the averaged gradient norm $\tfrac{1}{T} \sum_{t=0}^{T-1} \Vert \nabla f(x_t) \Vert^2$, and which is not a complexity bound since its variance term is independent of $\gamma$ and is essentially equal to $\sigma_*^2$ (see \cite[Lemma 4.18.1]{garrigos_handbook_2024}).
As far as we know, what follows is the first complexity bound for the ergodic function values provided in this setting.

Considering the result of Theorem \ref{S2::thm:main_small} for short step-sizes and letting $\gamma L \to 1$, we note that the bias term tends to $\tfrac{1}{2 \gamma T}$
while the variance term diverges since its denominator is a multiple of $1 - \gamma L$.
This suggests there might be a complication to obtain tight bounds in the case $\gamma L =1$.
As discussed in Section \ref{SNUM::sec}, we have empirical results showcasing that it is not possible to obtain a bound for SGD where the bias term is $\tfrac{1}{2 \gamma T}$ for $\gamma L =1$ whilst preserving a finite variance term.
Therefore, in this section, we will prove bounds where the bias term is arbitrarily close to $\tfrac{1}{2 \gamma T}$.

\begin{theorem}[Convex case, critical step-size]\label{S2::thm:main_1}
    Let Assumptions \ref{Ass:strongly convex smooth} and \ref{Ass:bounded solution variance} hold true with $\mu=0$.
    Let $(x_t)$ be generated by \eqref{S1::eq:SGD} with $\gamma L =1$.
    Then, for every  $\varepsilon \in (0,1)$ and $T \geq 1$, with $\bar x_T = \tfrac{1}{T}\sum_{t=0}^{T-1} x_t$,
    \begin{equation*}
        \mathbb{E} \left[ f(\bar x_T) - \min f \right]
        \leq 
        \frac{1}{T}\sum_{t=0}^{T-1} \mathbb{E} \left[ f(x_t) - \min f \right]
        \leq 
        \frac{\Vert x_0 - x_* \Vert^2}{2(1-\varepsilon) \gamma T} + \frac{1+ \varepsilon}{2\varepsilon(1-\varepsilon)} \gamma \sigma_*^2 .
    \end{equation*}
\end{theorem}
\begin{proof}
    It suffices to select the following parameters in Theorem \ref{T:lyapunov sufficient conditions strongly convex}:
    \[
        \rho = 2(1-\varepsilon) \gamma, \quad a_t \equiv a = 1,\quad \alpha_t = \gamma^2,\quad  \beta_t =  \gamma^2 \varepsilon, \quad \text{and}\quad e_t = \gamma^2 \cdot \frac{1+\varepsilon}{\varepsilon}.
    \]
    A detailed proof of their validity, including the motivation for their choice, is presented in the proof of Theorem \ref{T:convex large stepsize subopt:appendix}.
\end{proof}

We moreover show that a better bias-term, corresponding to $\varepsilon=0$, is not achievable through our proof technique. The implications of this result are further discussed in  Remark \ref{S4::rem:sing}.

\begin{proposition}\label{S4::prop:incomp}
    The sufficient conditions of Theorem \ref{T:lyapunov sufficient conditions strongly convex} cannot be verified with $\rate = 2\gamma$, $a_0=1$ and $\gamma L = 1$.
\end{proposition}
\begin{proof}
    The full details are spelled out in Appendix \ref{SA3::ssec:optimal}. The proofs proceeds considering two quadratrics of equal curvature centered at distinct points, which lead to a contradiction with the sufficient conditions of Theorem \ref{T:lyapunov sufficient conditions strongly convex}.
\end{proof}

\begin{remark}[Conjecture regarding singularity]\label{S4::rem:sing}
Although it would seem like a flaw of our analysis that $\varepsilon=0$ is not feasible in Theorem \ref{S2::thm:main_1}, we conjecture that obtaining a bias term of $2\gamma$ is not possible for $\gamma L=1$.
Our first evidence is numerical, and will be presented in Section \ref{SNUM::sec}. 
Our second evidence is that the sufficient conditions of Theorem \ref{T:lyapunov sufficient conditions strongly convex} cannot be satisfied when both $\gamma L = 1$ and $\rho = 2 \gamma$, see Proposition \ref{S4::prop:incomp}.
Although Theorem \ref{T:lyapunov sufficient conditions strongly convex} provides only sufficient conditions, it is important to note that these conditions are nonetheless sharp enough to yield bias-optimal bounds for all non-critical step-sizes. The resulting singularity at $\gamma L = 1$ is therefore unexpected, and it suggests that achieving a bias term of order $\tfrac{L}{2T}$ is unreachable for SGD in the absence of variance assumptions. This stands in contrast to both deterministic gradient descent and stochastic gradient descent under a uniform bounded variance assumption, for which this bias term may be reached \cite[Theorem 6]{taylor_stochastic_2019}.
\end{remark}

\begin{remark}[Stochastic proximal algorithm]\label{S2::rem:prox}
    While the convergence rates for the deterministic proximal algorithm are well-studied, complexity results for its stochastic counterpart are unknown in a general setting.
    A classical trick allows to rewrite the stochastic proximal algorithm as a particular instance of SGD, for a problem with no bounded variance, with a step-size of $\gamma L =1$.
    Theorem \ref{S2::thm:main_1} then enables us to derive the first complexity bound for this algorithm with no other assumption on the functions than convexity, avoiding, for instance, to impose finiteness and Lipschitzness as in \cite{davis_stochastic_2019}. 
    For more details, see Section \ref{SA::sec:prox}.
\end{remark}

Naturally, when interpolation holds (i.e., when $\sigma_{*} = 0$) the variance term vanishes, allowing us to take the limit $\varepsilon \to 0$ and thereby obtain an optimal convergence rate. 
We note that an identical rate was previously established in \cite[Theorem 8]{taylor_stochastic_2019} for a modified version of SGD incorporating a primal averaging scheme.

\begin{corollary}[Convex case with interpolation, critical step-size]\label{S2::coro:inter_1}
    Let Assumptions \ref{Ass:strongly convex smooth} and \ref{Ass:bounded solution variance} hold true, with $\mu=0$,
    and assume that $\sigma_*^2=0$.
    Let $(x_t)$ be generated by \eqref{S1::eq:SGD} with $\gamma L =1$.
    Then, for every $T \geq 1$, with $\bar x_T = \tfrac{1}{T}\sum_{t=0}^{T-1} x_t$,
    \[
        \Exp{f(\bar x_T)-\min f}\le 
        \frac{1}{T}\sum_{t=0}^{T-1} \mathbb{E} \left[ f(x_t) - \min f \right]
        \leq 
        \frac{ \|x_0-x_*\|^2}{2 \gamma T}.
    \]
    Moreover, the above bound is optimal, in the sense that there exists a problem for which $\sigma^2_*=0$ and both inequalities are  equalities.
\end{corollary}

\begin{proof}
    Apply Theorem \ref{S2::thm:main_1} with $\varepsilon \to 0$ to get the bound.
    The bias-optimality is again achieved through the same Huber function as for $\gamma L\in (0,1)$, see the proof of Proposition \ref{SC::prop:GD}.
\end{proof}

\subsection{Large Step-Sizes $\gamma L >1$}

The case $\gamma L\in (1, 2)$, referred to as \textit{large step-sizes}, is yet unexplored for SGD without bounded variance assumptions. We present below the first result of this type. 

\begin{theorem}[Convex case, large step-sizes, optimal bias]\label{S2::thm:main_large}
    Let Assumptions \ref{Ass:strongly convex smooth} and \ref{Ass:bounded solution variance} hold true, with $\mu=0$,
    and
    let $(x_t)$ be generated by \eqref{S1::eq:SGD} with $\gamma L \in (1,2)$.
    Then, for every $T \geq 1$, with $\bar x_T = \tfrac{1}{T}\sum_{t=0}^{T-1} x_t$,
    \begin{equation*}
        \Exp{f(\bar x_T) - \min f}
        \leq 
        \frac{1}{T}\sum_{t=0}^{T-1} \mathbb{E} \left[ f(x_t) - \min f \right]
        \leq
        \frac{\delta_T \Vert x_0 - x_* \Vert^2 }{2 \gamma (2 - \gamma L) T} + \frac{\gamma  \bar e_T'\sigma_*^2}{2(2 - \gamma L)^3},
    \end{equation*}
    where $\delta_T = 1 - (\gamma L-1)^{2T} \in (0,1)$, and $\bar e_T'$ grows exponentially like $ 
        \frac{1}{T (\gamma L-1)^{2T-2}}$.
    Moreover, this bound is bias-optimal, in the sense that there exists a problem for which $\sigma_*^2=0$ and the second inequality is an equality.
\end{theorem}
\begin{proof}
    It suffices to select the following parameters in Theorem \ref{T:lyapunov sufficient conditions strongly convex}, with $\theta = (\gamma L - 1)^2$:
    \[
        \rho = \frac{2 \gamma(2 - \gamma L)}{1 - \theta^T},\quad a_t = \frac{1 - \theta^{T-t}}{1- \theta^T}, \quad \beta_t = \frac{ \gamma(\gamma L -1)(1 - \theta^{T-t-1})}{L(1-\theta^T)}, \quad \alpha_t=\beta_t+\frac{\rho}{2L},
        \]
        \[
        e_t = 
        \frac{\gamma^2}{2-\gamma L} 
        \frac{1 - \theta^{T-t-1}}{1- \theta^T} 
        \left( \frac{\gamma L }{\theta^{T-t-1}} - 2(\gamma L -1) \right).
    \]
    A detailed proof of their validity, including the motivation for their choice, is presented in Appendix \ref{SA::sec:proof_large}.
    Proving that the bias term is optimal can be done by considering $f(x) = \tfrac{L}{2}\Vert x \Vert^2$, see Proposition \ref{SD::sec:proof_optimality_rates} in the appendix for more details.
\end{proof}

\begin{remark}[About bias-optimality]
The bound above is bias-optimal for the average function value gap, but not for the ergodic function value gap $f(\bar{x}_{T}) - \inf f$. This limitation arises from the fact that our Lyapunov analysis is inherently designed to produce bounds on the former quantity, rather than the latter. In the short step-size regime, we benefited from the fortunate circumstance that the worst-case objective is locally linear, which makes the two performance metrics coincide along the iterates. However, this argument does not extend to the present setting, and we are not aware of any existing proof technique capable of establishing SGD bounds directly for the ergodic function value gap.
\end{remark}

\begin{remark}[About the exponential variance]
    The variance term in Theorem \ref{S2::thm:main_large} increases exponentially with $T$, making our bound unpractical unless interpolation holds.
    A couple of comments must be made here.
    First, we believe that such exponential growth \emph{cannot be avoided} when looking for a bias-optimal bound, based on a numerical experiment described in Section \ref{SNUM::sec}.
    Second, this degenerate variance term is akin to what happened in the critical step-size regime, where the bias-optimal bound is seemingly impossible to attain due to an infinite variance term.
    While the price to pay for bias-optimality is too high here, one can obtain a bounded variance term provided the bias term is allowed to be suboptimal.
    A couple months after our paper was made available online, a concurrent work independently derived  a bound for SGD in the large step-size regime  \cite[Theorem 8]{attia_fast_2025}, where a suboptimal bias was traded with a bounded variance term.
    We illustrate below that such result can easily be derived within the Lyapunov framework of Theorem \ref{T:lyapunov sufficient conditions}, by using exactly the same Lyapunov coefficients than in the critical step-size regime.
\end{remark}

\begin{theorem}[Convex case, large step-sizes, suboptimal bias] \label{S2::th:large subopt}
    Let Assumptions \ref{Ass:strongly convex smooth} and \ref{Ass:bounded solution variance} hold true with $\mu=0$,
    and
    let $(x_t)$ be generated by \eqref{S1::eq:SGD} with $\gamma L \in (1,2)$.
    Then, for every $\varepsilon \in (0, 1)$, every $T \geq 1$, with $\bar x_T = \tfrac{1}{T}\sum_{t=0}^{T-1} x_t$,
    \begin{equation*}
        \Exp{f(\bar x_T) - \min f}
        \leq 
        \frac{1}{T}\sum_{t=0}^{T-1} \mathbb{E} \left[ f(x_t) - \min f \right]
        \leq
        \frac{\Vert x_0 - x_* \Vert^2 }{2\gamma(1-\varepsilon)(2 - \gamma L) T} + 
        \frac{\gamma L + \varepsilon (2- \gamma L)}{2\varepsilon(1-\varepsilon)(2 - \gamma L)^2} \gamma \sigma_*^2.
    \end{equation*}
\end{theorem}

\begin{proof}
    It suffices to select the following parameters in Theorem \ref{T:lyapunov sufficient conditions}:
    \[
    \rho = 2\gamma(1-\varepsilon)(2 - \gamma L), \quad a_t = 1, \quad \alpha_t = \frac{\gamma}{L}, \quad \beta_t = \frac{\gamma}{L}\left[ (\gamma L - 1) + \varepsilon (2 - \gamma L) \right], \quad
    e_t = \gamma^2 \frac{\gamma L + \varepsilon (2- \gamma L)}{\varepsilon(2-\gamma L)}.
    \]
    A detailed proof of their validity, including the motivation for their choice, is presented in Appendix \ref{SA::sec:proof_large}.
\end{proof}

\begin{remark}[Related works]
    As far as we are aware of, Theorem \ref{S2::thm:main_large} is the first bound provided for SGD with large step-sizes in the convex smooth case.
    We believe the closest anterior result to be \cite[Theorem 8]{taylor_stochastic_2019} where the authors consider a primal averaging modification of SGD, further assume interpolation to hold, and obtain a convergence rate slightly larger than the bias term of Theorem \ref{S2::thm:main_large}.
    This suggests again that such primal averaging is not needed to derive satisfying bounds for SGD.
    The result in \cite[Theorem 8]{attia_fast_2025}, published after the first version of our manuscript, is comparable to ours and corresponds to Theorem \ref{S2::th:large subopt} with $\varepsilon = 3/4$ and a variance term slightly larger than ours.
    We also observe that their variance can be directly recovered within the framework of our Theorem \ref{T:lyapunov sufficient conditions}, by using the exact same Lyapunov parameters, except for $\beta_t$ which is taken as $\gamma^2 / 2$.  
\end{remark}
% !TEX root = main.tex

\section{Bias-Optimal Bounds in the Strongly Convex Smooth Setting}\label{S3::sec:str}\label{S3::sec}

As in the convex case, results for strongly convex problems are obtained after carefully choosing Lyapunov parameters via Theorem \ref{T:lyapunov sufficient conditions}, in view to derive rates with Lemma \ref{L:SGD bound from lyapunov decrease strongly convex}. 
As in the convex case, the results in this section are presented with slight simplifications, for the ease of presentation. The more detailed bounds may be found in the appendix.
Our first Theorem \ref{S3::thm:strong} provides a new bias-optimal bound which is valid for almost all step-sizes $\gamma L \in (0,2)$.

\begin{theorem}[Strongly convex case, non-critical step-size]\label{S3::thm:strong}
    Let Assumptions \ref{Ass:strongly convex smooth} and \ref{Ass:bounded solution variance} hold, with $\mu > 0$,
and let $(x_t)$ be generated by \eqref{S1::eq:SGD} with $\gamma L \in (0, 2)$. If $\gamma \neq \gamma_{\crit} := \tfrac{2}{\mu + L}$ then, for every $T \geq 1$,
\begin{equation*}
    \Exp{\| x_T-x_*\|^2}\le \phi^{2T}\cdot \|x_0-x_*\|^2 + \frac{1-\phi^{2T}}{1- \phi^2}\frac{\phi \gamma_{\crit}}{\vert \gamma - \gamma_{\crit} \vert} \gamma^2 \sigma_*^2,
\end{equation*}
where $\phi = \max\{ 1- \gamma \mu ; \gamma L -1\} \in [0,1)$. 
Moreover the above bound is bias-optimal, in the sense that there exists a problem for which $\sigma_*^2=0$ and the inequality is an equality. 
\end{theorem}

\begin{proof}
    The proof distinguishes two cases.
    If $L>\mu$, we select the following parameters in Theorem \ref{T:lyapunov sufficient conditions strongly convex}:
\[
	\rho=0, \quad a_t = \phi^{2(T-t)}, \quad \alpha_t = \beta_t= \frac{\gamma \phi}{L-\mu} \cdot a_{t+1}, \quad e_t = \frac{\phi\gamma_{\crit}}{|\gamma-\gamma_{\crit}|} \cdot \gamma^2 a_{t+1}.
\]
    If $L=\mu$, we proceed with a limiting argument.
    We fix $\varepsilon \in (0,1-\phi^2)$ and select the following parameters in Theorem \ref{T:lyapunov sufficient conditions strongly convex}:
    \[
    \rho =0, \quad a_t = (\phi^2+\varepsilon)^{T-t}, \quad \alpha_t = \beta_t = \alpha a_{t+1},\quad e_t = \frac{2 \gamma^2 \alpha}{2 \alpha - \gamma^2}\cdot  a_{t+1}
    \]
    where 
    \[
    	\alpha = \max \left\{ \frac{\gamma^2}{2} \left(1+ \frac{\phi^2}{\varepsilon} \right) ,  \frac{1 - \phi^2- \varepsilon}{2 L^2} \right\}.
    \]
We thus obtain a bound from Lemma \ref{L:SGD bound from lyapunov decrease strongly convex}, and then take $\varepsilon\to 0$.
    A detailed proof of the validity of those parameters, including the motivation for their choice, is presented in Appendix \ref{SA2::sec_str}.
    The bias-optimality of the bound follows from the standard fact that the functions $f(x) = \tfrac{L}{2}\Vert x \Vert^2$ and $\tfrac{\mu}{2}\Vert x \Vert^2$ provide worst-case rates (see Appendix \ref{SA2::sec_str} for the details).
\end{proof}

\begin{remark}[Related works]
	As for the convex case, Theorem \ref{S3::thm:strong} provides a better bound than what can be found in the literature, providing a bias term of the order $\phi^{2T}$, where the geometrical rate $\phi^2$ corresponds exactly to the optimal rate for GD. The result also extends the range of step-sizes to $\gamma L \in (0,2)$, which was not done in previous works. The first boundsderived under the same assumptions as Theorem \ref{S3::thm:strong} were obtained in \cite[Theorem 1]{bach_non-asymptotic_2011}. For $\gamma L \in (0, \tfrac{\mu}{L}) \subset (0,1)$, they obtain a bias term $\varphi^T$, where $\varphi = 1 - 2 \mu \gamma + 2L^2 \gamma^2$ is larger than our $\phi^2$, which is $(1-\gamma \mu)^2$ in this context. Those results were improved in \cite[Theorem 2.1]{needell_stochastic_2016}, where the authors extend the analysis to $\gamma L \in (0,1)$ with a bound governed by $\varphi^T$, where $\varphi = 1-2\gamma\mu(1-\gamma L)$ is again larger than our $\phi^2$. A similar bound was obtained more recently in \cite[Theorem 3.1]{gower_sgd_2019}, where the authors notably showed that their result remains valid in a nonconvex setting.
\end{remark}

\begin{remark}[Conjecture regarding singularity]
	Our result is valid for every non-critical step-size $\gamma L \in (0,2)$. When $\gamma \to \gamma_{\crit}$ the bias term converges to $\left(\tfrac{L-\mu}{L+\mu} \right)^{2T}$, which is the best worst-case rate for GD. Unfortunately this is unattainable in our analysis as the variance term tends to $+\infty$. It is remarkable that covering large step-sizes allows us to observe a singularity phenomenon which, to the best of our knowledge, was never observed before. We conjecture again that, in general, one cannot derive a bound for SGD for which the bias term is equal to the best worst-case rate of GD among all possible constant step-sizes. It seems reasonable to think that achieving the optimal rate for a deterministic method requires such precision that introducing a small perturbation or variance might break it. Such conjecture seems to be validated by our numerical experiments presented in Section \ref{SNUM::sec}. Our next Theorem \ref{S3::thm:strong_opt_bias} provides a bound for the critical step-size, where a sub-optimal bias term is traded with a bounded variance term.
\end{remark}

\begin{theorem}[Strongly convex case, critical step-size, suboptimal bias]\label{S3::thm:strong_opt_bias}
    Let Assumptions \ref{Ass:strongly convex smooth} and \ref{Ass:bounded solution variance} hold true, with $\mu > 0$, and let $\gamma_{\crit} \coloneq \tfrac{2}{\mu + L}$, $\phi_{\crit} = \tfrac{L-\mu}{L+\mu}$.
    Let $(x_t)$ be generated by \eqref{S1::eq:SGD} with $\gamma =\gamma_{\crit}$. 
    Then, for any $\phi \in (\phi_{\crit},1)$ and every $T \geq 1$,
\begin{equation*}
    \Exp{\| x_T-x_*\|^2}\le 
    \phi^{2T}\cdot \|x_0-x_*\|^2 + 
    \frac{1 - \phi^{2T}}{1- \phi^2} \frac{\phi^2}{\phi^2 - \phi_{\crit}^2} \gamma^2 \sigma_*^2.
\end{equation*}
\end{theorem}

\begin{proof}
    If $L>\mu$, it suffices to select the following parameters in Theorem \ref{T:lyapunov sufficient conditions strongly convex}:
    \[
    	\rho =0,\quad a_t = \phi^{2(T-t)}, \quad \alpha_t = \beta_t= \frac{2\phi^2}{(L-\mu)^2}\cdot  a_{t+1}, \quad e_t = \frac{\phi^2}{\phi^2 - \phi_{\crit}^2} \cdot \gamma^2 a_{t+1}.
    \]
	On the other hand, if $L=\mu$, it suffices to select the following parameters in Theorem \ref{T:lyapunov sufficient conditions strongly convex}:
	\[
		\rho =0, \quad a_t = \phi^{2(T-t)}, \quad \alpha_t = \beta_t=\alpha a_{t+1}, \quad e_t =  \frac{2 \gamma^2 \alpha}{2 \alpha - \gamma^2} a_{t+1},
	\]
	where 
	\[
        \alpha = \max \left\{ \frac{\gamma^2}{2} \left(1+ \frac{\phi_{opt}^2}{\varepsilon} \right) ,  \frac{1 - \phi^2_{opt} - \varepsilon}{2 L^2} \right\},
	\]
	and take $\varepsilon\to 0$. A detailed proof of the validity, including the motivation for their choice, is presented in Appendix \ref{SA2::sec_str}.
\end{proof}

As in the convex case, when interpolation holds the variance blowing up is no longer a problem, allowing us to take an optimal bias.

\begin{corollary}[Strongly convex case with interpolation, critical step-size]
Let Assumptions \ref{Ass:strongly convex smooth} and \ref{Ass:bounded solution variance} hold true, with $\mu > 0$,
and assume that $\sigma_*^2=0$. 
Let $(x_t)$ be generated by \eqref{S1::eq:SGD} 
with $\gamma = \tfrac{2}{\mu + L}$. 
Then, for every $T \geq 1$,
\begin{equation*}
    \Exp{\| x_T-x_*\|^2}\le \left( \frac{L-\mu}{L+\mu}  \right)^{2T} \|x_0-x_*\|^2.
\end{equation*}
Moreover the above rate is optimal, in the sense that there exists a problem for which $\sigma_*^2=0$ and the inequality is an equality.
\end{corollary}
% !TEX root = main.tex

\section{Stochastic Proximal Algorithm} \label{SA::sec:prox}

\subsection{Main Result}

In the deterministic setting, one can approximate the minimizers of a nonsmooth (and possibly not everywhere finite) convex function by means of the  Proximal Point Algorithm \citep{martinet_breve_1970,rockafellar_monotone_1976}. In this section, we explain how the study of SGD for the critical step-size $\gamma L = 1$ allows us to study the convergence of the Stochastic Proximal Algorithm as a byproduct, and discuss how the results obtained compare to existing ones. %Our results remain valid for extensions including non-uniform sampling or mini-batching (see Section \ref{S:minibatch}).

%\subsection{Problem Formulation} \label{SS:Prox_formulation}

%Consider the context of Problem \ref{Pb:main}, where the $f_i$ are not necessarily differentiable, or even finite.

\begin{problem}\label{Pb:prox}
    Let $\mathcal H$ be a real Hilbert space with inner product $\langle \cdot, \cdot \rangle$ and induced norm $\|\cdot\|$.
    Let $\{f_i\}_{i\in \mathcal I}$ be a family of extended real-valued functions $f_i \colon \mathcal H \to \mathbb{R}\cup \{+ \infty\}$, where $\mathcal I$ is a (possibly infinite) set of indices.
    We consider the problem of minimizing $f\coloneqq \Exp{f_i}$, where the expectation is taken %over the indices $i \in \mathcal I$, 
    with respect to some probability distribution $\mathcal{D}$ over $\mathcal I$, such that $i \mapsto f_i(x)$ is $\mathcal{D}$-measurable.
    We assume that each $f_i$ is proper, convex, and lower semi-continuous, and that ${\rm{argmin}}~f \neq \emptyset$.
\end{problem}

An algorithm of choice to solve Problem \ref{Pb:prox} is the \textit{Stochastic Proximal Algorithm}, defined as
\begin{equation}\label{D:SProx}\tag{SProx}
	x_{t+1} = \prox_{\gamma f_{i_t}}(x_t),
\end{equation}
where $i_t \in \mathcal{I}$ is sampled i.i.d. from the distribution $\mathcal{D}$, and 
where the proximal operator $\prox_{\gamma f_{i}}$ is  defined by
\begin{equation*}
    \prox_{\gamma f_{i}}(x) = \underset{y \in \mathcal{H}}{\rm{argmin}}~\left\{f_i(y) + \frac{1}{2 \gamma} \Vert y-x \Vert^2\right\},
\end{equation*}
with a step-size $\gamma>0$. Since $f_i$ is proper, convex and lower semi-continuous, $\prox_{\gamma f_i}$ is well-defined and single-valued.

\textbf{Barriers to a Unified Analysis.} 
The complexity results available for \eqref{D:SProx} can be divided in four categories: convex feasibility problems; problems where the functions are everywhere finite and uniformly Lipschitz-continuous; smooth problems, where the gradients are uniformly Lipschitz-continuous; and finally strongly convex problems. These range from functions intrinsically taking the value $+ \infty$, to finite functions with various levels of regularity. %As far as we know, only the results in the strongly convex case avoid making any regularity assumption on the functions $f_i$. This case, however, excludes the feasibility problem. 
%This is one possible reason why no unified analysis for \eqref{D:SProx} has ever been carried out.
% \paragraph{Studying the complexity of \eqref{D:SProx} is not trivial.}
%\textbf{Performance Metric.} Before attempting to obtain bounds on \eqref{D:SProx}, one must determine what metric to estimate.
As opposed to the deterministic case, there is no obvious performance metric that makes sense in full generality. The first natural candidate is 
%In the deterministic setting, a standard result shows that the function value gap $f(x_t) - \min f$ is upper bounded by $O(\tfrac{1}{t})$. For SGD, it is standard as well to upper bound
the expected function value gap, with or without averaging, namely $\mathbb{E}\left[ f(\bar x_T) - \min f \right]$ or $\mathbb{E}\left[ f(x_T) - \min f \right]$. However, we cannot hope to bound such gap for the Stochastic Proximal Algorithm, since nothing guarantees that the iterates $x_T$ (or their averages $\bar x_T$) remain in the domain of $f$. 
An extreme case is the convex feasibility problem (see Example \ref{Ex:feasibility problems}), where $\bar x_T$ must already be a solution in order for $f(\bar x_T)$ to be finite.
Now, one could search for a bound on the distance between the current iterate and a solution of the problem (or the distance to the set of solutions), but such distance can decrease arbitrarily slowly even in the deterministic case \citep{garrigos_convergence_2023}, which is why we usually do not have bounds on $\Vert x_t - x_* \Vert$ for (non strongly) convex problems. Our analysis will provide bounds for a different metric, which is always well-defined, and can be connected to standard performance metrics.% that make sense in different particular contexts.

\textbf{Stochastic Proximal Algorithm as SGD.} 
It is common knowledge that the proximal algorithm is a particular case of the gradient descent algorithm.
This can be stated formally, by introducing the notion of the \textit{Moreau envelope} of a convex function.
Given a proper, convex, and lower semi-continuous function $f\colon \mathcal{H} \to \mathbb{R} \cup \{ + \infty\}$, and a parameter $\gamma > 0$, we define $f^\gamma \colon \mathcal H \to \mathbb{R}$ by
\begin{equation*}
f^\gamma(x) = \inf\limits_{y \in \mathcal H} \left\{ f(y) + \frac{1}{2 \gamma} \Vert y - x \Vert^2\right\}.
\end{equation*}
The Moreau envelope $f^\gamma$ is finite and differentiable. Its gradient, given by $\nabla f^\gamma (x) = \frac{x - \prox_{\gamma f}(x)}{\gamma}$, is Lipschitz continuous with constant $\tfrac{1}{\gamma}$ \citep[Proposition 12.30]{bauschke_convex_2017}. In particular, computing $\prox_{\gamma f}(x)= x - \gamma \nabla f^\gamma(x)$ is the same as computing one step of gradient descent for the function $f^\gamma$, with step-size $\gamma$. %:
%\begin{equation*}
%	\prox_{\gamma f}(x) = x - \gamma \nabla f^\gamma(x).
%\end{equation*}
Since $\gamma\Lip(\nabla f^\gamma)=1$, \eqref{D:SProx} can be seen as an instance of \eqref{S1::eq:SGD} applied to the function
\begin{equation*}
	F^\gamma(x) \coloneqq  \Exp{f_i^\gamma(x)},
\end{equation*}
with the step-size being \emph{critical}.
% \begin{equation*}
% 	x_{t+1} = \prox_{\gamma f_i}(x_t) = x_t - \gamma \nabla f_i^\gamma(x_t).
% \end{equation*}
This connection was already discussed and exploited in \cite{patrascu_nonasymptotic_2018,necoara_randomized_2019}.
The novelty of our approach is that we are able to derive complexity rates for \eqref{D:SProx} from the ones we obtain from  \eqref{S1::eq:SGD} when $\gamma L =1$.
%Now, the Stochastic Proximal Algorithm aims at minimizing another function, namely
%\begin{equation*}
%	F^\gamma(x) \coloneqq  \Exp{f_i^\gamma(x)}.
%\end{equation*}
%The following result is a consequence of our bounds for SGD with critical step-size in Theorem \ref{T:SGD bound convex optimal}.

\begin{theorem}[Bound for SProx. General case]\label{T:SProx convex general}
    Considering Problem \ref{Pb:prox}, let $\gamma >0$ and assume that ${\rm{argmin}}~F^\gamma \neq \emptyset$.
	Let $(x_t)$ be generated by \eqref{D:SProx} with step-size $\gamma$.
	Then, for every $T\ge 1$, with $\bar x_T=\tfrac1T\sum_{t=0}^{T-1}x_t$,
	\begin{equation*}
	\mathbb{E}\left[  F^\gamma( \bar x_T) - \min F^\gamma \right] \leq 
	\frac{\Vert x_0 - x_*^\gamma \Vert^2}{\gamma T} + 3 \gamma \sigma^2_*(\gamma),
	\end{equation*}
	where $\sigma^2_*(\gamma) = \mathbb{E}\left[\Vert \nabla f_i^\gamma(x_*^\gamma) \Vert^2 \right]$, and $x_*^\gamma \in {\rm{argmin}}~F^\gamma$.
    If we further assume that each $f_i$ is $\mu$-strongly convex ($\mu >0$), then %, then with $\mu_\gamma \coloneqq \tfrac{\mu}{1 + \gamma \mu}$, we have
    \begin{equation*}
%           \Exp{\Vert x_T - x_*^\gamma \Vert^2}
%        \leq
%        \left( 1 - \frac{\mu_\gamma}{L}\right)^{2T} 
%        \Vert x_0 - x_*^\gamma \Vert^2
%        + \frac{2}{\mu_\gamma ( 2L - \mu_\gamma)} \sigma_*^2(\gamma).
        \Exp{\Vert x_T - x_*^\gamma \Vert^2}
        \leq
        \left( \frac{1}{1+\gamma \mu}\right)^{2T} 
        \Vert x_0 - x_*^\gamma \Vert^2
        + \frac{2(1 + \gamma \mu)}{\mu} \gamma^2 \sigma_*^2(\gamma).
    \end{equation*}
\end{theorem}

\begin{proof}
    For the convex case, apply Theorem \ref{S2::thm:main_1} with $\varepsilon = 1/2$. %Corollary \ref{} with $\gamma L = 1$. 
    For the strongly convex case, apply Theorem \ref{S3::thm:strong}.
    Remember that each $f_i^\gamma$ is $L_\gamma$-smooth with $\gamma L_\gamma =1$, and it a standard duality  exercise to show that each $f_i^\gamma$ is $\mu_\gamma$-strongly convex, with $\mu_\gamma = \tfrac{\mu}{1+ \gamma \mu}$. 
    % (see \citep[Proposition 13.24.iii]{bauschke_convex_2017}), we deduce that $(f^\gamma_i)^* =  f^* + \frac{\gamma}{2}\Vert \cdot \Vert^2$
     % \begin{equation*}
     %     (f^\gamma_i)^* =  f^* + \frac{\gamma}{2}\Vert \cdot \Vert^2
     % \end{equation*}
     %is $\tfrac{1}{\mu} + \gamma$ smooth, thus $f^\gamma_i$ is $\mu_\gamma$-strongly convex. %with constant $\tfrac{1}{{1}/{\mu} + \gamma} = \mu_\gamma$.
     It is then a matter of simplifying the obtained bound
     \begin{equation*}
         \phi_\gamma = 1 - \frac{\mu_\gamma}{L_\gamma} \quad \implies \quad
         \frac{1}{1- \phi_\gamma^2}
         \frac{\tfrac{2}{\mu_\gamma + L_\gamma}\phi_\gamma}{\vert \tfrac{2}{\mu_\gamma + L_\gamma} - \tfrac{1}{L_\gamma} \vert } 
         = \frac{2(1+\gamma \mu)^2}{\mu (2+ \gamma \mu)}
         \leq 
         \frac{2(1+\gamma \mu)}{\mu},
     \end{equation*}
     yielding the wanted bound.
\end{proof}

This is the first time that a complexity bound on the Stochastic Proximal Algorithm is derived with no other assumption than convexity.
We note some unusual terms in those bounds, such as the use of the regularized function $F^\gamma=\Exp{f_i^\gamma}$ for the metric instead of $f=\Exp{f_i}$, or the use of the regularized minimizer $x_*^\gamma$ instead of $x_*$.
The presence of $x_*^\gamma$ is an artifact of our approach, that we were unable to avoid, except for interpolated problems, and leave this technical issue for a future work.
In the following subsections we illustrate that the optimality metric $F^\gamma$ can be related to metrics used in the literature under standard additional assumptions, allowing us to recover several classical results, together with a novel result in the interpolation setting.
One class of results remain out of our reach at the moment, which we discuss next.

\begin{remark}[Strongly Convex  Problems]\label{R:sprox_sc_general}
As far as we know, Theorem \ref{T:SProx convex general} is the first bound available for \eqref{D:SProx} applied to strongly convex problems, without any further assumptions.
Nevertheless, our bound controls the distance of the iterates to the minimizer $x_*^\gamma$ of the regularized objective, instead of the original objective $f$.
Some works were able to provide similar bounds controlling $\mathbb{E}[\Vert x_T - x_* \Vert^2]$ directly, under additional regularity assumptions.
For instance \citep[Proposition~5]{asi_stochastic_2019} assumes that all the functions $f_i$ have a common and closed domain. Similarly, \citep[Theorem 10]{patrascu_nonasymptotic_2018}, which analyzes a slightly different variant of the stochastic proximal algorithm, relies on the same assumption.
More recently \citep[Theorem~2]{richtarik_unified_2024} considered functions $f_i$ having a full domain.
We leave for a future work the question of connecting Theorem \ref{T:SProx convex general} to the above mentioned results.
\end{remark}

\subsection{Interpolated Problems}

Let us remind, as stated in Definition \ref{D:interpolation}, that interpolation holds when the functions $f_i$ share a common minimizer.
Using standard convex analysis tools, one can show that if the family $(f_i)_{i \in \mathcal{I}}$ interpolates, then so does $(f_i^\gamma)_{i \in \mathcal{I}}$, which means that we can use Corollary \ref{S2::coro:inter_1}.
We further have ${\rm{argmin}}~F^\gamma = {\rm{argmin}}~f$ and $\inf F^\gamma = \inf f$, which means that the function value gap $F^\gamma(x) - \inf F^\gamma$ is a meaningful metric for our original problem, in the sense that it provides a faithful  measure of how far we are from optimality.

\begin{theorem}[Bounds for SProx. Interpolation case]\label{T:Sprox convex interpolation}
    Consider Problem \ref{Pb:prox} and assume that interpolation holds.
	Let $(x_t)$ be generated by \eqref{D:SProx} with step-size $\gamma >0$.
	Then for every $x_* \in {\rm{argmin}}~f$,
	\begin{equation*}
	\mathbb{E}\left[  F^\gamma( \bar x_T) - \inf f \right] \leq 
	\frac{\Vert x_0 - x_* \Vert^2}{2 \gamma T}.
	\end{equation*}
    If we further assume that each $f_i$ is $\mu$-strongly convex ($\mu >0$) then
    \begin{equation*}
        \Exp{\Vert x_T - x_* \Vert^2}
        \leq
        \left( \frac{1}{1+\gamma \mu}\right)^{2T} 
        \Vert x_0 - x_* \Vert^2.
    \end{equation*}
\end{theorem}

\begin{proof}
    Let us first verify that interpolation for the original problem also applies to the regularized one.
    If interpolation holds for the family $(f_i)_{i \in \mathcal{I}}$, then 
    $\cap~ {\rm{argmin}}~f_i \neq \emptyset$.
    As shown, for instance, in \cite[Proposition 12.9]{bauschke_convex_2017}, $\inf f_i^\gamma = \inf f_i$ and ${\rm{argmin}}~f_i^\gamma = {\rm{argmin}}~f_i$.
    As such $\cap~ {\rm{argmin}}~f_i^\gamma \neq \emptyset$ and interpolation holds for the regularized problem.
    We can then use  Lemma \ref{L:interpolation argmin and inf coincide} to see that
    \[
        {\rm{argmin}}~F^\gamma 
        = \bigcap {\rm{argmin}}~f_i^\gamma 
        = \bigcap{\rm{argmin}}~f_i
        =
        {\rm{argmin}}~f\quad\hbox{and}\quad \inf F^\gamma = \Exp{\inf f_i^\gamma}
        = \Exp{\inf f_i}
        = \inf f.
    \]
	We can now apply Corollary \ref{S2::coro:inter_1} in the convex case, or  Theorem \ref{T:SProx convex general}  in the strongly convex case.
\end{proof}

The above result is new in the convex case. 
We are not aware of any result providing bounds for \eqref{D:SProx} under the sole assumption of interpolation. The result in the strongly convex case can be compared to \citep[Theorem 6.1]{tovmasyan_revisiting_2025}, where the authors obtain a worse rate when the function $f$ is strongly convex, under the additional restrictions that $\gamma \leq \tfrac{\mu}{2}$, that the functions are differentiable, and that a bound on the gradient variance, called \emph{star-similarity}, is verified.

\begin{example}[Feasibility problems]\label{Ex:feasibility problems}
    Feasibility problems consists in finding $x_* \in C \coloneqq \cap_i C_i$ where $C_i$ are nonempty closed convex sets.
    A standard algorithm for solving this problem is the \textit{Stochastic Projection Algorithm}, where the current iterate is projected onto a set $C_i$ is chosen at random: $x_{t+1} = \proj_{C_{i_t}}(x_t)$.
This is a particular case of the \eqref{D:SProx} algorithm applied to Problem \ref{Pb:prox} where $f_i$ is taken as the indicator function of $C_i$, which is $0$ if $x \in C_i$ and is $+ \infty$ if $x \notin C_i$.
A first important observation is that this problem is well-posed ($f$ has minimizers) if and only if interpolation holds.
This is due to the fact that ${\rm{argmin}}~f=C \coloneqq \cap_i C_i=\cap_i{\rm{argmin}}~f_i$.
A second point is that the regularized function value gap is simply $F^\gamma(x) - \inf F^\gamma
		=
		\tfrac{1}{2 \gamma}\Exp{\dist(x;C_i)^2}$
which is a standard measure of how far $x$ is from the set of solutions $C = \cap_i C_i$.
We can therefore apply Theorem \ref{T:Sprox convex interpolation}  and obtain that the Stochastic Projection Algorithm enjoys the rate (take for instance $\gamma = \tfrac{1}{2}$ and $x_* = \proj_{C}(x_0)$)
	\begin{equation*}
	\Exp{\dist(\bar x_T;C_i)^2}
    \leq 
	\frac{\dist(x_0; C)^2}{ T}.
	\end{equation*}
The above bound is well-known and corresponds exactly to what was obtained in \citep[Proposition 6]{nedic_random_2010}.
It is remarkable that our analysis on SGD  converted into \eqref{D:SProx} is able to provide exactly the best known rates for the Stochastic Projection Algorithm.
Of course the story does not stop here, as it is possible to derive better bounds provided that the intersection $\cap_i C_i$ is \emph{regular} \cite{Kru06,LewLukMal09}.
Such regularity ensures that the problem enjoys a Łojasiewicz inequality \cite{AttBolSva13,bolte_error_2017} which can be exploited to derive linear rates of convergence  \cite[Theorem 3]{necoara_randomized_2019}.
We leave for future work the question whether our analysis can be adapted to exploit such regularity features.
\end{example}

\subsection{Regular Problems: Lipschitz, Smooth}

If the functions $f_i$ are  everywhere finite, the function value gap $f(x) - \inf f$ becomes an appropriate performance metric. 
So it becomes reasonable to expect the results of  Theorem \ref{T:SProx convex general} to hold for $f(x) - \inf f$ instead of $F^\gamma(x) - \inf F^\gamma$.
We are going to show that both quantities are related when the problem is \emph{regular}, which we decline in two flavors: Lipschitz and smooth problems.

\begin{corollary}[Bound for SProx. Regular cases]\label{T:SProx lipschitz:cor}
    Consider Problem \ref{Pb:prox}, let $\gamma >0$ and assume that ${\rm{argmin}}~F^\gamma \neq \emptyset$.
	Let $(x_t)$ be generated by \eqref{D:SProx}, with step-size $\gamma >0$.
    \begin{enumerate}
        \item If each function $f_i$ is $G$-Lipschitz continuous, then
	\begin{equation*}
	\mathbb{E}\left[  f( \bar x_T) - \inf f \right] \leq 
	\frac{\Vert x_0 - x_*^\gamma \Vert^2}{\gamma T} 
    + 4\gamma G^2, \qquad\hbox{where}\quad x_*^\gamma \in {\rm{argmin}}~F^\gamma.
	\end{equation*}
        \item If each function $f_i$ is $L$-smooth, if $\mathbb{E}[\inf f_i] > - \infty$, and if $\gamma L \in (0,1)$, then 
	\begin{equation*}
	\mathbb{E}\left[  f( \bar x_T) - \inf f \right] \leq 
	\frac{\Vert x_0 - x_*^\gamma \Vert^2}{(1- \gamma L)\gamma T} 
    +   
    \frac{7 \gamma L}{1 - \gamma L} \Delta_*, \qquad\hbox{where}\quad x_*^\gamma \in {\rm{argmin}}~F^\gamma,\quad \Delta_* = \inf f - \Exp{\inf f_i}.
    \end{equation*}
    \end{enumerate}
\end{corollary}

\begin{proof}
In this proof we will use the subdifferential of $f_i$ at a point $x$, which is the set
\begin{equation*}
    \partial f_i(x) \coloneqq \{ g \in \mathcal{H} \ | \ (\forall y \in \mathcal{H}) \ f_i(y) - f_i(x) - \langle g, y-x \rangle \geq 0 \}.
\end{equation*}
This set is nonempty, convex and closed  \citep[Propositions 16.4 and 16.27]{bauschke_convex_2017}, in particular due to the fact that we assume $f_i$ to take finite values. We denote its least-norm element by $\partial^0 f_i(x)$.
Using the definition of subdifferential, we can write for $x \in \mathcal{H}$
\begin{equation*}
    f_i(x) - f_i^\gamma(x) = \sup_{y \in \mathcal{H}} f_i(x) - f_i(y) - \frac{1}{2 \gamma} \Vert y-x \Vert^2 
        \leq  \sup_{y \in \mathcal{H}} - \langle \partial^0 f_i(x), y-x \rangle - \frac{1}{2\gamma} \Vert y - x \Vert^2 
\end{equation*}
The supremum on the right-hand side is attained at $y = x -\gamma \partial^0 f_i(x)$, which yields
$f_i(x) - f_i^\gamma(x) 
        \leq 
        \frac{\gamma}{2} {\Vert \partial^0 f_i(x) \Vert^2}$. 
We further know that $f_i^\gamma \leq f_i$, which means that $F^\gamma \leq f$ and so $\inf F^\gamma \leq \inf f$.
Combining all the above inequalities, we deduce that
    \begin{equation}\label{e:transfer regularized function value}
        (\forall x \in \mathcal{H}) \quad
        f(x) - \inf f \leq F^\gamma(x) - \inf F^\gamma + \frac{\gamma}{2} \Exp{\Vert \partial^0 f_i(x) \Vert^2}.
    \end{equation}
Combining this inequality with Theorem \ref{T:SProx convex general}, we obtain
	\begin{equation*}
	\mathbb{E}\left[  f( \bar x_T) - \inf f \right] \leq 
	\frac{\Vert x_0 - x_*^\gamma \Vert^2}{\gamma T} 
    + 3 \gamma \sigma_*^2(\gamma) 
    + \frac{\gamma}{2} \Exp{\Vert \partial^0 f_i(\bar x_T) \Vert^2},
	\end{equation*}
	where $x_*^\gamma \in {\rm{argmin}}~F^\gamma$ and $\sigma^2_*(\gamma) = \mathbb{E}\left[\Vert \nabla f_i^\gamma(x_*^\gamma) \Vert^2 \right]$.
We are now ready to derive our bounds in the Lipschitz and smooth cases.
\begin{enumerate}
    \item The $G$-Lipschitzness of $f_i$ guarantees that $\Vert \partial^0 f_i(\bar x_T) \Vert^2 \leq G^2$.
    Moreover, $\|\nabla f_i^\gamma (x_*^\gamma)\| \le \|\partial^0 f_i(x_*^\gamma)\|$ (see \citep[Lemma 3]{patrascu_nonasymptotic_2018}) so we also have $\Vert \nabla f_i^\gamma(x_*^\gamma) \Vert^2 \leq G^2$ and in turn $\sigma_*^2(\gamma) \leq G^2$.
    This leads to the desired variance term $3 \gamma G^2 + \tfrac{\gamma}{2}G^2 \leq 4 \gamma G^2$.
    \item If the functions are $L$-smooth, it is easy to bound $\sigma_*^2(\gamma)$ with $\Delta_*$.
    To do so, take $x_i \in {\rm{argmin}}~f_i^\gamma = {\rm{argmin}}~f_i$ and  use the convexity and smoothness of $f_i$ through Lemma \ref{L:characterisation expected cocoercivity} to get 
    \begin{equation*}
        \Vert \nabla f_i^\gamma(x_*^\gamma) \Vert^2
        =
        \Vert \nabla f_i^\gamma(x_*^\gamma) - \nabla f_i^\gamma(x_i) \Vert^2
        \leq
        2L \left( f_i^\gamma(x_*^\gamma) -  f_i^\gamma(x_i) \right)
        =
        2L \left( f_i^\gamma(x_*^\gamma) -   \inf f_i \right).
    \end{equation*}
    After taking expectation, we conclude that $\sigma_*^2(\gamma) 
        \leq 
        2L \left( \inf F^\gamma - \Exp{\inf f_i} \right)
        \leq 
        2L \left( \inf f  - \Exp{\inf f_i}  \right)
        =
        2L \Delta_*$.
    Then we need to tackle the term $\Exp{\Vert \partial^0 f_i(\bar x_T) \Vert^2} = \Exp{\Vert \nabla f_i(\bar x_T) \Vert^2}$.
    A now standard variance transfer argument (see \citep[Lemma 4.19]{garrigos_handbook_2024}) shows that $\Exp{\Vert \nabla f_i(x) \Vert^2} \leq 2L(f(x) - \inf f) + 2L \Delta_*$, where $\Delta_* \coloneqq  \inf f - \Exp{\inf f_i}$.
    Reorganizing the term $f(\bar x_T) - \inf f$ and dividing by $(1-  \gamma L)$ leads to the desired complexity bound.
\end{enumerate}
\vspace{-1.8em}
\end{proof}

\begin{remark}[Convex Regular Problems] \label{R: convex regular}
    The unifying approach that leads to Theorem \ref{T:SProx convex general} comes at a price.
    Directly exploiting the regularity of the functions allows to derive sharper bounds.
    For Lipschitz functions, both \cite[Proposition 4]{asi_stochastic_2019} and \cite[Theorem 5]{patrascu_nonasymptotic_2018} obtain bounds in which the bias and variance terms are both improved by constant multiplicative factors, namely 
    \[
    \E \left[f(\overline{x}_T) - f_*\right] \leq \frac{\|x_0 - x_*\|^2}{2\gamma T} + \frac{\gamma G^2}{2}.
    \]
    Likewise, for smooth functions \cite[Theorem 4.3]{traore_variance_2024} obtains the better bound 
    \[
    \E \left[f(\overline{x}_T) - f_*\right] \leq \frac{\|x_0-x_*\|^2}{\gamma T} + 2\gamma \sigma^2_*,
    \]
    for a step-size $\gamma < 1/2L$. Additionally, note that the right hand side of the aforementioned bounds involve a true problem solution $x_*$, whilst ours unconventionally relies on the distance of the initialization $x_0$ to the regularized solution $x_*^\gamma$.
\end{remark}
% !TEX root = main.tex

\section{Numerical Results with PEP}\label{SNUM::sec}

In this section, we discuss both the proof strategy underlying our results and the tightness of the derived bounds. Our analysis relies on the \emph{Performance Estimation Problem} (PEP) methodology, which played a central role throughout this work. The PEP framework allows one to reformulate the search for admissible Lyapunov parameters ensuring energy decrease as a semidefinite program that can be solved numerically. This approach not only yields numerical values for the optimal bias and variance terms, but also enables us to infer proofs of their optimality through the dual variables assocaited to the program. The insights provided by this framework directly inspired and guided our analytical proofs, by helping us conjecture the appropriate Lyapunov parameters, as made explicit in our proofs in Appendix \ref{SA::sec:proofs}. 

Section \ref{S7::sec:PEP} provides a concise presentation of the PEP framework, while a more detailed discussion is deferred to Appendix \ref{SD:sec}. We have provided lower bounds matching the bias term of our bounds, thus analytically proving that the bias term cannot be improved. Section \ref{S7::sec:numerical} demonstrates that the variance terms obtained in our theoretical bounds match the numerically optimal ones identified through the PEP analysis.

\subsection{Performance Estimation Problem Methodology}\label{S7::sec:PEP}

The PEP framework was initially introduced by \cite{drori_performance_2014}, further improved by \cite{taylor_smooth_2017}, and adapted by \cite{taylor_stochastic_2019} in the context of stochastic algorithms, that we will follow closely. We briefly introduce the PEP framework, specifically adapted to carry out our Lyapunov analysis, and refer the reader to Appendix \ref{SD:sec} for a detailed explanation.

As demonstrated by Lemmas \ref{L:SGD bound from lyapunov decrease} and \ref{L:SGD bound from lyapunov decrease strongly convex}, a natural way to establish guarantees for SGD is to derive nonnegative parameters $\rho, (a_t, e_t)$ such that $\Exp{E_{t+1}}\le \Exp{E_t}$ for all $t=0, \ldots, T-1$ and for all functions $f_1, \dots, f_m \in \mathcal F_{\mu, L}(\mathbb{R}^d)$, where $E_t$ is given by \eqref{S0::eq:Lyapunov}.
We will refer to such parameters as \emph{admissible Lyapunov parameters}. As our focus is on bias-optimal results, we aim to identify admissible Lyapunov parameters that minimize the bias. In particular Lemmas \ref{L:SGD bound from lyapunov decrease} and \ref{L:SGD bound from lyapunov decrease strongly convex} show that the bias and variance are given by terms of the form
\[
    \text{Bias} \coloneq
    \begin{dcases}
        {a_0}/{\rho}, \quad &\text{ if $f$ is convex,} \\[1.25ex]
        {a_0}/{a_T}, \quad &\text{ if $f$ is strongly-convex.}
    \end{dcases}, \quad 
    \text{Var} \coloneq
    \begin{cases}
        {\sum_{t=0}^{T-1}e_t}/{(\rho T)}, \quad &\text{ if $f$ is convex,} \\[1.25ex]
        {\sum_{t=0}^{T-1}e_t}/{a_T}, \quad &\text{ if $f$ is strongly-convex.}
    \end{cases}
\]
Thus, minimizing the bias within this framework amounts to solving
\[
    \text{Bias}_{\text{opt}} \coloneq \inf \left\{ \text{Bias} \colon \rho, \left(a_t, e_t\right) \text{ are admissible Lyapunov parameters} \right\}.
\]
We note that this is an infinite-dimensional optimization problem, due to the functional constraints $f_i\in \mathcal F_{\mu, L}(\R^d)$. Moreover, as both $a_0$ and $\rho$ ($a_0$ and $a_T$ in the strongly convex case) are variables of the problem, the optimization problem in inherently nonconvex.
However, using standard tools from the PEP methodology, this problem can be re-cast into an equivalent finite-dimensional semidefinite program that can be solved numerically. In fact, we observe that
\[
\rho, \left(a_t, e_t\right) \text{ are Lyapunov parameters} \iff B_0 \leq 0, \ldots, B_T \leq 0,
\]
where $$B_t \coloneq \sup \left\{\E [E_{t+1} - E_t] \colon f_1, \dots, f_m \in \mathcal F_{\mu, L}(\mathbb{R}^d), ~~ x_0\in \R^d \right\}.$$
The essence of the PEP methodology is that the infinite-dimensional constraint $f_i \in \mathcal F_{\mu, L}(\mathbb{R}^d)$,
can be reduced to a finite collection of linear matrix inequalities. This reformulation allows the problems $(B_t)$ to be re-cast as a conic convex optimization problems. Moreover, by considering its dual formulation, verifying whether $B_t\le 0$ holds true amounts to solving a feasibility problem, denoted by $\tilde B_t$. These ideas may be summarized by the following high-level theorem.

\begin{theorem} \label{thm:simplified sdp}
    For all $t=0,\ldots, T$, there exists a convex conic problem $\Tilde{B}_t$ such that $\tilde B_t$ is feasible if, and only if, $B_t\le 0$.
    Consequently,
    \[
    \rho, \left(a_t, e_t\right) \text{ are admissible Lyapunov parameters} \iff 
    \Tilde{B}_0, \ldots, \Tilde{B}_T \text{ are feasible.}
    \]
    Therefore, $\Bias_{\opt}$ admits a tight semidefinite reformulation.
\end{theorem}

It is known that the dual variables extracted when solving such semidefinite program help to derive the analytical proofs of the results. This observation motivated our proofs and allowed us to obtain analytical proofs showing bias-optimality. This process is further described in Appendix \ref{sec:maths proof from pep}.

Once $\text{Bias}_{\text{opt}}$ has been established, one can, as a by-product derive the variance parameters $(e_t)$ that will yield the minimal variance by solving
\[
\text{Var}_{\text{opt}} \coloneq \inf \{\text{Var} \colon \rho, \left(a_t, e_t\right) \text{ are Lyapunov parameters}, \ \text{Bias}=\text{Bias}_{\text{opt}}\},
\]
which can, in a similar fashion, be equivalently reformulated as a semidefinite program.

\subsection{Numerical Experiments}\label{S7::sec:numerical}

We have established the optimal bias parameters for all non-critical step-sizes in both the convex and strongly convex settings (Theorems \ref{S2::thm:main_small}, \ref{S2::thm:main_large}, and \ref{S3::thm:strong}). However, our results suggest that these optimal biases are not attainable at the corresponding critical step-sizes (Theorems \ref{S2::thm:main_1} and \ref{S3::thm:strong_opt_bias}), as achieving it seem to result in an infinite variance term. 
In the following, we perform numerical experiments to examine the associated variance behavior when the bias is fixed to its theoretically optimal value. 
This allows us to verify numerically that our variance bounds are tight given the optimal bias, and, moreover, that the variance diverges as the step-size approaches the critical value. This suggests that the optimal bias cannot be achieved in these degenerate regimes. Specifically, we fix the bias term to be the optimal bias term obtained in Theorems \ref{S2::thm:main_small} and \ref{S2::thm:main_large} in the convex setting and in Theorem \ref{S3::thm:strong} in the strongly convex setting, and aim to minimize the associated variance term.

All of our numerical experiments\footnote{Code available on \url{https://github.com/DanielCortild/Bias-Optimal-SGD}.} were developed in Python 3.13 using the solvers MOSEK version 11.0.19 \citep{mosek_mosek_2025} and Clarabel version 0.9.0 \citep{goulart_clarabel_2024} to solve the resulting semidefinite programs, and run on Intel Xeon Platinum 8380 CPUs. 

\textbf{Convex Setting.}
The obtained minimal variance is plotted in Figure \ref{SO::fig:convex_variances}.
For all $\gamma L \in (0,2) \backslash \{1\}$, the numerically optimal variance precisely matches the theoretically predicted one.
This suggests, at least numerically, that the variance term in our bounds cannot be improved. 
It also demonstrates numerically that the variance diverges at the critical step-size $\gamma L = 1$
hence leaving the bias to be at most $(2-\varepsilon)\gamma$ for an arbitrary $\varepsilon>0$, as presented in Theorem \ref{S2::thm:main_1}.

\begin{figure}[H]
    \centering
    \includegraphics[width=0.5\linewidth]{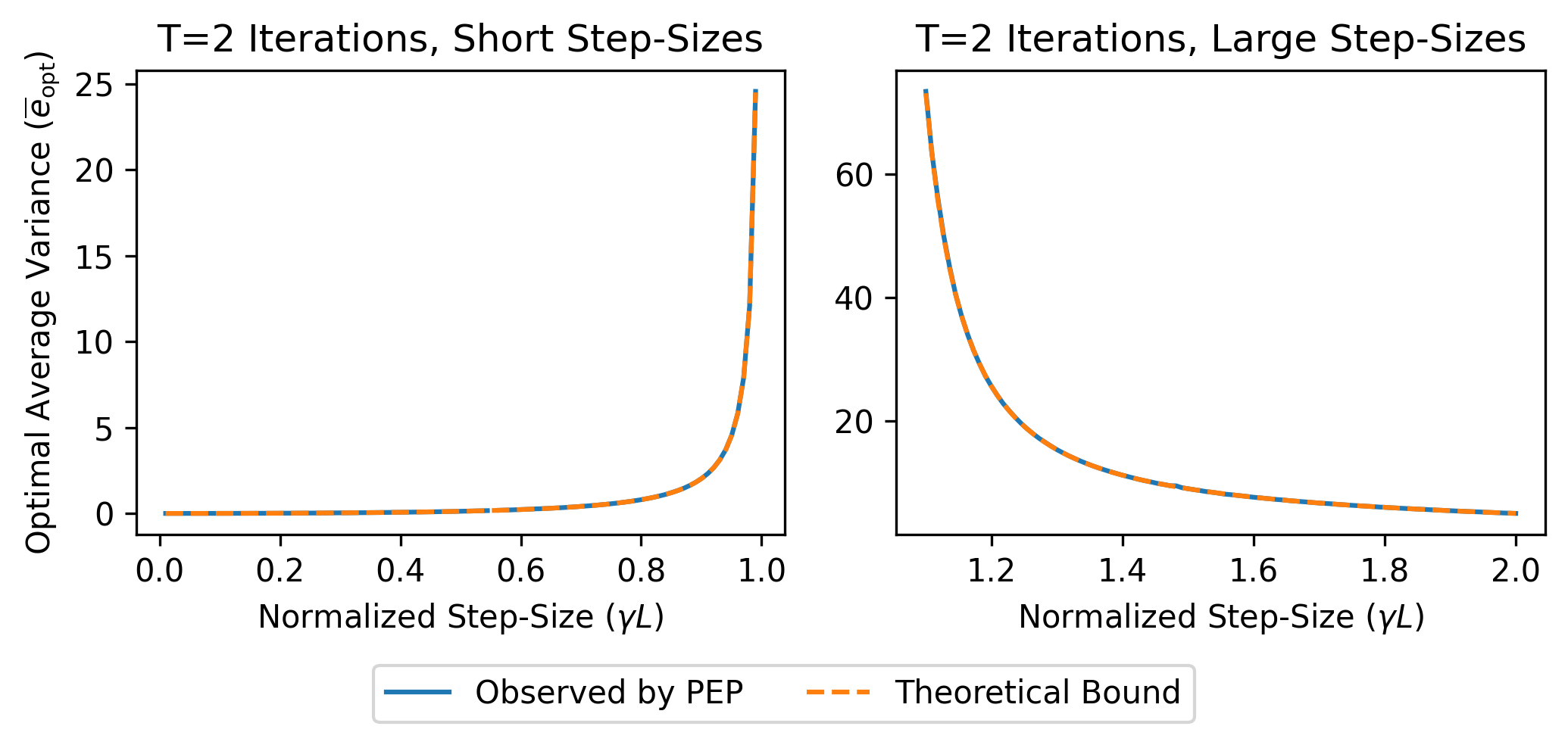}
    \caption{Theoretical and numerical average variance term, subject to optimal bias ($L=1$).}
    \label{SO::fig:convex_variances}
\end{figure}

\textbf{Strongly Convex Setting.}
As illustrated in Figure \ref{SO::fig:str_convex_variance1}, the empirical and theoretical variances coincide perfectly for all $\gamma \neq \tfrac{2}{L + \mu}$, confirming the sharpness of our theoretical results. 
In contrast, the variance exhibits a clear divergence in the vicinity of the critical step-size, suggesting that, much like in the convex case, the theoretically optimal bias is not achievable at this critical step-size, thus justifying the need for a result with sub-optimal bias, obtained in Theorem \ref{S3::thm:strong_opt_bias}.

\begin{figure}[H]
    \centering
    \includegraphics[width=0.7\linewidth]{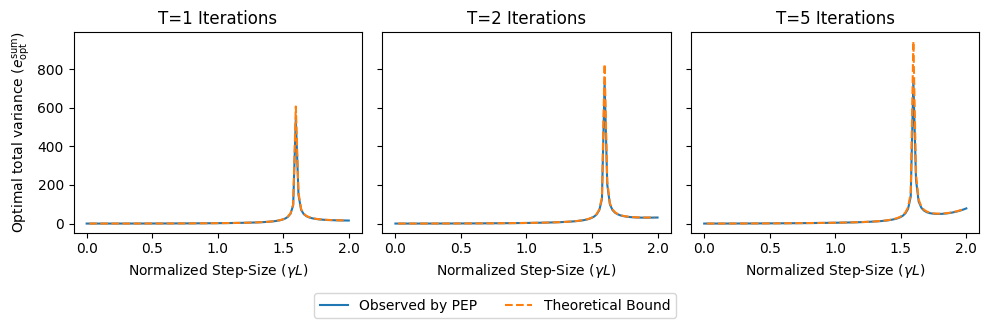}
    \caption{Theoretical and numerical total variance term, subject to optimal bias ($L=1$, $\mu = 0.25$ and $\tfrac{2}{\mu + L} = 1.6$).}
    \label{SO::fig:str_convex_variance1}
\end{figure}

In both the convex and strongly convex settings, our analytical expressions for the variance align with the variance estimated through the PEP methodology. Although this was investigated, we were unable to identify an instance of SGD for which the variance accumulated throughout the execution of the algorithm attains the theoretical variance described in Theorems \ref{S2::thm:main_small} and \ref{S2::thm:main_large} for the convex case, and Theorem \ref{S3::thm:strong} for the strongly convex case. 
An alternative explanation could be that our Lyapunov framework, while being well suited to derive an optimal bias term in our bounds, struggles to capture the behavior of the optimal variance. Maybe a larger class of Lyapunov energies is required to obtain a lower variance term while keeping bias optimality.
We leave that question open for a future work.

% !TEX root = main.tex

\section{Conclusion and Perspectives}\label{S4::sec}\label{SC::sec}

In this paper, we established new and improved upper bounds for \eqref{S1::eq:SGD} without imposing any variance assumptions beyond smoothness, both in the convex and strongly convex settings, and for a full range of step-sizes. Our analysis relies on the introduction of a novel Lyapunov energy \eqref{S0::eq:Lyapunov}. By providing lower bounds that match our results \textit{in some sense}, we demonstrated that the obtained bounds are \textit{bias-optimal}. By doing so, our analysis raised intriguing questions regarding the presence of unexpected singularities at critical step-sizes, observed in both settings. The results, as well as their proofs, are inspired by the Performance Estimation Problem methodology.

\textbf{Future Work. }\label{SC::sec:future} 
We leave the following questions and research directions open:

\begin{itemize}
    \item While this paper focuses on achieving optimal bias terms, this is not the only meaningful objective. In future work, we plan to investigate bounds that provide sharp \emph{complexity} rates. We anticipate this to be more challenging, as quantities such as $\|x_0 - x_*\|^2$ and $\sigma_*^2$ then become intrinsic problem parameters rather than fixed constants.
    \item In this work, we focused on Lyapunov energies of the form \eqref{S0::eq:Lyapunov}. It remains unclear whether sharper results could be obtained by considering more general classes of Lyapunov functions. While we showed that the bias term cannot be improved, no such guarantees are currently available for the variance term. In particular, it is conceivable that the singularities we observed are artifacts of the chosen Lyapunov structure. Investigating more complex energies therefore appears as a natural direction for future research.
    \item In the convex setting, our results establish bias-optimality for the average function value gap in the large step-size regime. A natural next step would be to strengthen Theorem \ref{S2::thm:main_large} in order to obtain bias-optimal bounds for the function value gap evaluated at the Ces\`aro average of the iterates.
    \item Our analysis is restricted to constant step-sizes. Extending these results to varying or adaptive step-size schemes constitutes an interesting and potentially impactful line of work.
\end{itemize}

\textbf{Acknowledgments. } 
This work benefited from the support of the FMJH Program Gaspard Monge for optimization and operations research and their interactions with data science. 
Daniel Cortild acknowledges the support of the Clarendon Funds Scholarships.
The authors also thank the Center for Information Technology of the University of Groningen for their support and for providing access to the Hábrók high performance computing cluster.

\appendix

% !TEX root = main.tex

\section{Proofs of Main Results}\label{SA::sec:proofs}

\subsection{Convex Setting}\label{SA::sec:conv}\label{SA::sec:conv_proof}

In this section, we derive parameters that satisfy the sufficient conditions of Theorem \ref{T:lyapunov sufficient conditions strongly convex} in order to derive upper bounds for SGD in the convex setting. The propositions in this section can be verified symbolically, yet we include the proofs to provide intuition on how these parameters were derived.

\subsubsection{Short Step-Sizes: \texorpdfstring{$\gamma L <1$ (Proof of Theorem \ref{S2::thm:main_small})}{}}\label{SA::sec:proof_short}

\begin{proposition}[Lyapunov parameters. Convex case, short step-sizes]\label{P:lyapunov convex short}
    Let $\gamma L \in (0, 1)$, and consider the parameters $\rho, a_t, e_t, \alpha_t, \beta_t$ defined by
    \begin{itemize}
        \item $\rho = 2 \gamma + \tfrac{2(1 - \gamma L)}{LT}$,
        \item $a_t = \tfrac{T-t}{T} \tfrac{1+ \gamma L (T-1) }{1 + \gamma L(T-t-1)}$,
        \item $\alpha_t \equiv \alpha =\tfrac{\rho}{2L}$,
        \item $\beta_t \equiv \beta =0$,
        \item $e_t = \frac{a_{t+1}\gamma^2 \alpha_t }{\alpha_t - a_{t+1} \gamma^2}$.
    \end{itemize}
    These parameters satisfy the sufficient Lyapunov conditions of Theorem \ref{T:lyapunov sufficient conditions strongly convex}.
    Moreover, it holds that
    \begin{equation*}
        e_t = 
        \frac{\gamma^2}{1 - \gamma L} \frac{T-t-1}{T} \frac{(1 - \gamma L) + \gamma L T}{(1- \gamma L) + \gamma L(T-t)} 
        \leq
        \frac{\gamma^2}{1 - \gamma L}.
    \end{equation*}
    In particular, the bound in Theorem \ref{S2::thm:main_small} holds true.
\end{proposition}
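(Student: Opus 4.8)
The plan is to verify each of the six conditions of Theorem~\ref{T:lyapunov sufficient conditions strongly convex} directly, exploiting the fact that $\beta\equiv 0$ and $\mu=0$, which collapses several conditions. With $\mu=0$ and $\beta_t=0$, Condition~2 becomes $\rho\le 2L\alpha$, which holds with equality by the choice $\alpha=\rho/(2L)$; Condition~3 becomes $a_{t+1}\le a_t$, i.e.\ the sequence $(a_t)$ is nonincreasing; Condition~4 becomes $a_{t+1}\gamma^2\le\alpha$; Condition~5 becomes $(a_{t+1}\gamma-\alpha L)^2\le(a_t-a_{t+1})(\alpha-a_{t+1}\gamma^2)$; and Condition~6 becomes $a_{t+1}\gamma^2\alpha\le(\alpha-a_{t+1}\gamma^2)e_t$, which holds with equality by the definition of $e_t$ (this also forces $\alpha-a_{t+1}\gamma^2>0$, i.e.\ Condition~4 in strict form, which must be checked to ensure $e_t$ is well-defined and nonnegative). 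So the real work is Conditions 3, 4, and 5, plus the positivity statements of Condition~1.

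First I would record the closed form $a_t=\tfrac{T-t}{T}\cdot\tfrac{1+\gamma L(T-1)}{1+\gamma L(T-t-1)}$ and compute the ratio $a_{t+1}/a_t$ or the difference $a_t-a_{t+1}$ explicitly. Writing $u_t=1+\gamma L(T-t-1)$ (so $u_{t+1}=u_t-\gamma L$ and $u_0=1+\gamma L(T-1)$), one has $a_t=\tfrac{(T-t)u_0}{T\,u_t}$, hence $a_{t+1}=\tfrac{(T-t-1)u_0}{T\,u_{t+1}}$. A direct computation of $a_t-a_{t+1}=\tfrac{u_0}{T}\bigl(\tfrac{T-t}{u_t}-\tfrac{T-t-1}{u_{t+1}}\bigr)$, after putting over the common denominator $u_tu_{t+1}$, should simplify the numerator to something manifestly nonnegative for $\gamma L\in(0,1)$ and $0\le t\le T-1$; this gives Condition~3 and a usable formula for the right-hand side of Condition~5. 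Next, for Condition~4 I would note $a_{t+1}\le a_0=1$ (from monotonicity, once Condition~3 is in hand) and, since $\alpha=\rho/(2L)=\gamma+\tfrac{1-\gamma L}{L T}>\gamma$, we get $a_{t+1}\gamma^2\le\gamma^2<\gamma\alpha<\alpha$ — actually one needs the sharper $a_{t+1}\gamma^2<\alpha$ with strict inequality, which follows since $\gamma^2<\gamma\cdot\alpha\le\alpha$ using $\gamma L<1\le$ the relevant factor; I would double-check that $\alpha-a_{t+1}\gamma^2\ge\alpha-\gamma^2>0$ so that $e_t\ge 0$ and the claimed formula for $e_t$ (obtained by substituting the closed forms of $a_{t+1}$ and $\alpha$ into $e_t=\tfrac{a_{t+1}\gamma^2\alpha}{\alpha-a_{t+1}\gamma^2}$) is valid, including the final bound $e_t\le\tfrac{\gamma^2}{1-\gamma L}$ which should reduce to showing the $t$-dependent fraction is $\le 1$.

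The main obstacle will be Condition~5, the quadratic-form inequality $(a_{t+1}\gamma-\alpha L)^2\le(a_t-a_{t+1})(\alpha-a_{t+1}\gamma^2)$, since both sides are genuinely $t$-dependent and the algebra does not collapse as cleanly. The strategy there is to substitute $\alpha L=\gamma L+\tfrac{1-\gamma L}{T}$ so that $a_{t+1}\gamma-\alpha L=\gamma(a_{t+1}-L\cdot\tfrac{\alpha}{\gamma})$ — better, to write everything in terms of $u_t,u_{t+1}$ and $\gamma L$, clear the denominator $T^2u_t u_{t+1}^2$ (or a suitable power), and check that the resulting polynomial inequality in $\gamma L\in(0,1)$, $T\ge 1$, $t\in\{0,\dots,T-1\}$ holds — ideally it factors or becomes a sum of squares. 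Since the proposition asserts these can be "verified symbolically", I would present the reduction to this polynomial identity/inequality and carry out the factorization, noting that the choice of $a_t$ was reverse-engineered precisely so that Condition~5 is tight (holds with equality, or near-equality) for the optimal $\rho$; verifying that equality case pins down the computation. Finally, Condition~1 (nonnegativity of $\rho,a_t,e_t,\alpha$) is immediate from the closed forms and the range $\gamma L\in(0,1)$, and I would close by stating the simplified bound $e_t\le\gamma^2/(1-\gamma L)$ as a corollary of the explicit formula.
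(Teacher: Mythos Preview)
Your plan is correct and will go through; Condition~5 in fact holds with \emph{equality} for the stated parameters, so your polynomial verification will collapse to an identity rather than an inequality. With your notation $u_t=1+\gamma L(T-t-1)$ and $u_0=1+\gamma L(T-1)$, the three key computations are
\[
a_t-a_{t+1}=\frac{u_0(1-\gamma L)}{T\,u_t u_{t+1}},\qquad
L\alpha-a_{t+1}\gamma=\frac{u_0(1-\gamma L)}{TL\,u_{t+1}},\qquad
\alpha-a_{t+1}\gamma^2=\frac{u_0(1-\gamma L)\,u_t}{TL^2\,u_{t+1}},
\]
from which $(a_{t+1}\gamma-L\alpha)^2=(a_t-a_{t+1})(\alpha-a_{t+1}\gamma^2)$ is immediate. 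One small slip: $\alpha=\rho/(2L)=\tfrac{\gamma}{L}+\tfrac{1-\gamma L}{L^2T}$, not $\gamma+\tfrac{1-\gamma L}{LT}$; what you wrote is $L\alpha$, and your Condition~4 argument should be phrased via $a_{t+1}\le 1$ and $\alpha-\gamma^2=\tfrac{(1-\gamma L)(1+\gamma LT)}{TL^2}>0$.

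The paper takes a different, constructive route: rather than verifying the given $a_t$, it starts from Condition~5 with $\alpha$ and $\rho$ undetermined, and via the substitution $v_t=1/(L\alpha-\gamma a_t)$ reduces it to the arithmetic recursion $v_{t+1}\le v_t-\tfrac{\gamma}{(1-\gamma L)\alpha}$. Enforcing $a_T\ge 0$ then pins down the largest admissible $\alpha$ (hence $\rho$), and setting the recursion to equality yields the closed form for $a_t$. Your direct verification is shorter once the parameters are handed to you; the paper's derivation explains \emph{why} these are the optimal parameters and how they were found. For the final bound on $e_t$, your plan (show the $t$-dependent fraction is $\le 1$) is cleaner than the paper's, which instead argues $e_t$ is nonincreasing in $t$ and bounds by $e_0$.
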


\begin{proof}
    Notice that the sufficient conditions from Theorem \ref{T:lyapunov sufficient conditions} are homogeneous. Because our goal is to obtain bounds whose bias term is the smallest possible, we can without loss of generality impose that $a_0=1$, and try to maximize $\rho$. Through our numerical analysis, we empirically observe that we can take $\alpha_t \equiv \alpha >0$ and $\beta_t\equiv \beta=0$, thus justifying these choices. We dedicate the remainder of the proof to justify our remaining choices.
    
    We start by focusing on Condition \ref{T:lyapunov sufficient conditions strongly convex:curious} from Theorem \ref{T:lyapunov sufficient conditions}, which reads
    \begin{eqnarray*}
        && \left( a_{t+1} \gamma - L{\alpha} \right)^2 
        \leq   
        ({\alpha} - a_{t+1} \gamma^2) (a_t - a_{t+1}) \\
        & \iff & 
        \left( a_{t+1} \gamma - L{\alpha} \right)^2 
        \leq   
        (1-\gamma L) \alpha(a_t - a_{t+1}) +(\gamma L \alpha - a_{t+1} \gamma^2) (a_t - a_{t+1}) 
        \\
        & \iff & 
        \left( a_{t+1} \gamma - L{\alpha} \right)^2 
        - \gamma ( L \alpha - a_{t+1} \gamma) (a_t - a_{t+1}) 
        \leq   
        (1-\gamma L) \alpha(a_t - a_{t+1})
        \\
        & \iff & 
        \left(L \alpha - a_{t+1} \gamma \right)(L \alpha - a_{t+1} \gamma
        - \gamma  (a_t - a_{t+1}) )
        \leq   
        (1-\gamma L) \alpha(a_t - a_{t+1}) 
        \\
        & \iff & 
        \left(L \alpha - a_{t+1} \gamma \right)(L \alpha
        -  a_t \gamma)
        \leq   
        (1-\gamma L) \alpha(a_t - a_{t+1}).
    \end{eqnarray*}
    Condition \ref{T:lyapunov sufficient conditions strongly convex:monotone at} boils down to $a_t\ge a_{t+1}$. By additionally imposing $a_t>a_{t+1}$, we may divide by $a_t-a_{t+1}$ to obtain
    \begin{eqnarray*}
        & \iff & 
        \frac{ 
        \left(L \alpha - a_{t+1} \gamma \right)(L \alpha 
        - a_t \gamma )}{a_t - a_{t+1}}
        \leq   
        (1-\gamma L) \alpha
        \\
        & \iff & 
        \frac{ 
        \left(L \alpha - a_{t+1} \gamma \right)(L \alpha 
        - a_t \gamma )}{(L \alpha - a_{t+1} \gamma) - (L \alpha - a_{t} \gamma ) }
        \leq   
        \frac{(1-\gamma L) \alpha}{\gamma}.
    \end{eqnarray*}
    As $(a_t)$ is decreasing, with $a_0=1$, it holds true that $a_t\le 1$ for all $t=0, \ldots, T-1$. Specifically, if $L\alpha>\gamma$ (which we shall assume), then both $L \alpha - a_{t+1} \gamma>0$ and $L \alpha - a_{t} \gamma>0$, thus allowing us to write
   \begin{eqnarray*}
        & \iff & 
        \frac{(L \alpha - a_{t+1} \gamma) - (L \alpha - a_{t} \gamma ) }{ 
        \left(L \alpha - a_{t+1} \gamma \right)(L \alpha
        - a_t \gamma )}
        \geq   
        \frac{\gamma}{(1-\gamma L) \alpha}
        \\
        & \iff & 
        \frac{1}{L \alpha 
        - a_t \gamma}
        -
        \frac{1}{L \alpha - a_{t+1} \gamma}
        \geq   
        \frac{\gamma}{(1-\gamma L) \alpha}
        \\
        & \iff & 
        \frac{1}{L \alpha - a_{t+1} \gamma}
        \leq   
        \frac{1}{L \alpha 
        - a_t \gamma}
        -
        \frac{\gamma}{(1-\gamma L) \alpha}.
    \end{eqnarray*}
    Introducing the temporary variables $u_t \coloneqq  \tfrac{1}{L \alpha - \gamma  a_t}$ and $c = \tfrac{\gamma}{(1-\gamma L) \alpha}$, we see that we obtained an arithmetic inequality $u_{t+1} \leq u_t - c$, leading us to $u_t \leq u_0 - ct$.
    We know that $u_0 = \tfrac{1}{L \alpha - \gamma}$, so we can write
    \begin{equation*}
        u_t \leq 
        \frac{1}{L \alpha - \gamma} - \frac{t\gamma}{(1 - \gamma L) \alpha}
        =
        \frac{(1 - \gamma L) \alpha - t\gamma (L \alpha - \gamma)}{(1- \gamma L) \alpha (L \alpha - \gamma)}.
    \end{equation*}
    Returning to the definition of $u_t$ we deduce the following bound for $a_t$:
    \begin{eqnarray*}
        a_t &\leq &
        \frac{1}{\gamma}\left( L \alpha - \frac{(1- \gamma L) \alpha (L \alpha - \gamma)}{(1 - \gamma L) \alpha - t \gamma (L \alpha - \gamma)} \right) \\
        &=&
        \frac{1}{\gamma}\frac{ L \alpha^2(1- \gamma L) -L\alpha t \gamma  (L \alpha - \gamma) - (1 - \gamma L) \alpha (L \alpha - \gamma)}{(1 - \gamma L) \alpha - t \gamma (L \alpha - \gamma)} 
        \\
        &=&
        \frac{1}{\gamma} \frac{  -L \alpha t  \gamma (L \alpha - \gamma)  +   \alpha\gamma(1 - \gamma L)}{(1 - \gamma L) \alpha - t \gamma (L \alpha - \gamma)} \\
        &=&
        \alpha \frac{  (1 - \gamma L) -t  L (L \alpha - \gamma)}{(1 - \gamma L) \alpha - t \gamma (L \alpha - \gamma)}.
    \end{eqnarray*}
    We note that our Condition \ref{T:lyapunov sufficient conditions strongly convex:positivity} requires that $a_t \geq 0$ for all $t=0, \ldots, T$, which is equivalent to having $a_T \geq 0$ as we assumed $(a_t)$ to be decreasing.
    Specifically, we must have
    \begin{equation*}
         (1 - \gamma L) - T L (L \alpha - \gamma) \geq 0 
         \quad
         \iff
         \quad
         \alpha \leq \frac{1 - \gamma L + \gamma T L}{T L^2}.
    \end{equation*}
    As we want $\rho$ as large as possible, and due to Condition \ref{T:lyapunov sufficient conditions strongly convex:rho} which reads $\rho \leq 2L(\alpha - \beta)$ with $\beta = 0$, we want $\alpha$ to be as large as possible.
    Therefore we fix 
    \begin{equation*}
        \alpha = \frac{1 - \gamma L + T \gamma L}{T L^2}
        \quad \text{ and } \quad 
        \rho = 2L \alpha = \frac{2(1 - \gamma L) + 2 T \gamma L}{T L}.
    \end{equation*}
    This choice respects the assumption $L \alpha > \gamma$ we made earlier, since $1 - \gamma L > 0$.
    
    To get an expression for $a_t$, we simply set the inequalities to equalities, and replace $\alpha$ by its chosen value.
    We then obtain
    \begin{equation*}
        L \alpha - \gamma=
        \frac{(1- \gamma L) + \gamma LT - \gamma LT}{TL}
        =
        \frac{1 - \gamma L}{TL}
        \Rightarrow
        u_0 = \frac{TL}{1 - \gamma L}.
    \end{equation*}
    So
    \begin{eqnarray*}
        u_t &=& u_0 - tc 
        = \frac{TL}{1 - \gamma L} - \frac{t \gamma}{\alpha(1- \gamma L)}
        =
        \frac{\alpha TL - t \gamma }{\alpha(1- \gamma L)} 
        =
        \frac{\tfrac{1}{L}(1 - \gamma L) + \gamma T - t \gamma }{\alpha(1- \gamma L)}     
        =
        \gamma \frac{T -t - 1 + \tfrac{1}{\gamma L}}{\alpha (1 - \gamma L)}.
    \end{eqnarray*}
    We now use the fact that $\gamma a_t = L \alpha - u_t^{-1}$ to write
    \begin{eqnarray*}
        a_t 
        &=&
        \frac{L\alpha}{\gamma}  - \frac{1}{\gamma u_t}
        =
        \frac{\alpha L}{\gamma } - \frac{1}{L \gamma} \frac{(1- \gamma L) \tfrac{\alpha L}{\gamma}}{T-t-1 + \tfrac{1}{\gamma L}} \\
        &=&
        \frac{\alpha L}{\gamma}\left( 1 - \frac{1}{\gamma L} \frac{(1- \gamma L) }{T-t-1 + \tfrac{1}{\gamma L}} \right)
        =
        \frac{\alpha L}{\gamma}\left( 1 - \frac{(1- \gamma L) }{\gamma L (T-t-1 )+ 1} \right) \\
        &=&
        \frac{\alpha L}{\gamma} \frac{\gamma L (T-t)}{\gamma L (T-t-1 )+ 1} 
        =
        \frac{(1 - \gamma L) + \gamma L T}{T \gamma L} \frac{\gamma L (T-t)}{\gamma L (T-t-1 )+ 1} \\
        &=&
        \frac{(T-t)}{T} \frac{1  + \gamma L (T-1)}{\gamma L (T-t-1 )+ 1}.
    \end{eqnarray*}
    To conclude the proof, we need to check all the sufficient Lyapunov conditions of Theorem \ref{T:lyapunov sufficient conditions}.
    Conditions \ref{T:lyapunov sufficient conditions strongly convex:positivity}, \ref{T:lyapunov sufficient conditions strongly convex:rho}, \ref{T:lyapunov sufficient conditions strongly convex:monotone at} and \ref{T:lyapunov sufficient conditions strongly convex:curious} are already satisfied, by construction.
    It remains to check Conditions \ref{T:lyapunov sufficient conditions strongly convex:annoying} and \ref{T:lyapunov sufficient conditions strongly convex:variance}.

    Condition \ref{T:lyapunov sufficient conditions strongly convex:annoying} is equivalent to
    \begin{eqnarray*}
        %&&
        a_{t+1} \gamma^2 \leq \alpha %\\
        &\iff&
         \gamma^2 \frac{(T-t- 1)}{T} \frac{1  + \gamma L (T-1)}{1 + \gamma L (T-t-2 )} \leq \frac{(1 - \gamma L) + \gamma LT}{T L^2} \\
         & \iff &
         \frac{\gamma^2 L^2 (T-t-1)}{1 + \gamma L (T-t-2 )} (1 + \gamma L (T-1)) \leq 1 + \gamma L (T-1)\\
         & \iff &
         \gamma^2 L^2 (T-t-1) \leq 1 + \gamma L (T-t-2 )
         =1 + \gamma L (T-t-1 ) - \gamma L
         \\
         & \iff &
         0\leq 1 - \gamma L + \gamma L (T-t-1 )(1 - \gamma L),
    \end{eqnarray*}
    which is true for every $t = 0, \dots, T-1$.

    Condition \ref{T:lyapunov sufficient conditions strongly convex:variance} is a lower bound on $e_t$, of which the equality case coincides with the chosen value of $e_t$. More specifically, this gives
    \begin{eqnarray*}
       e_t = \frac{a_{t+1}\gamma^2 \alpha }{\alpha - a_{t+1} \gamma^2}
       =
       \frac{\gamma^2}{\frac{1}{a_{t+1} } - \frac{\gamma^2}{\alpha}},
    \end{eqnarray*}
    where
    \begin{eqnarray*}
        \frac{1}{a_{t+1}} - \frac{\gamma^2}{\alpha}
        &=&
        \frac{T}{T-t - 1} \frac{(1- \gamma L) + \gamma L (T-t-1) }{(1 - \gamma L) + \gamma L T}
        -
        \frac{T \gamma^2 L^2}{(1 - \gamma L) + \gamma L T}
        \\
        &=&
        \frac{T}{T-t - 1} \frac{(1- \gamma L) + \gamma L (T-t-1) }{(1 - \gamma L) + \gamma L T}
        -
        \frac{T \gamma^2 L^2}{(1 - \gamma L) + \gamma L T} \\
        &=&
        T \frac{(1- \gamma L) + \gamma L (T-t-1) - \gamma^2 L ^2 (T-t-1)}{(T-t-1)((1 - \gamma L) + \gamma L T)} \\
        &=&
        T \frac{(1- \gamma L) ((1- \gamma L) + \gamma L(T-t))}{(T-t-1)((1 - \gamma L) + \gamma L T)}.
    \end{eqnarray*}
    We thus get
    \begin{eqnarray*}
        e_t &=& \frac{\gamma^2}{1 - \gamma L} \frac{T-t-1}{T} \frac{(1 - \gamma L) + \gamma L T}{(1- \gamma L) + \gamma L(T-t)}.
    \end{eqnarray*}
    We readily verify that $e_t$ is nonincreasing with respect to $t$.
    Indeed, its derivative with respect to $t$ has the same sign as
    \begin{equation*}
        (T-1)((1 - \gamma L) + \gamma L T) \gamma L - ((1 - \gamma L) + \gamma L T)^2,
    \end{equation*}
    which is always nonpositive.
    Therefore $e_t \leq e_0$, where
    \begin{equation*}
        e_0 = \frac{\gamma^2}{1 - \gamma L} \frac{T-1}{T} \leq \frac{\gamma^2}{1 - \gamma L},
    \end{equation*}
    thus concluding the proof. The bound in Theorem \ref{S2::thm:main_small} follows by combining Theorem \ref{T:lyapunov sufficient conditions} and Lemma \ref{L:SGD bound from lyapunov decrease}.
\end{proof}

Note that the bound in Theorem \ref{S2::thm:main_small} may be sharpened by using the exact values of $e_t$ rather than their bound. Finally, we prove the bias-optimality claim in Theorem \ref{S2::thm:main_small}.

\begin{proposition}[Bias-optimality. Convex case, short step-sizes]\label{SC::prop:GD}
        The bias obtained in the bound of Theorem \ref{S2::thm:main_small} is optimal, in the sense that, for any $x_0\in \mathcal H$, there exists a convex and smooth problem for which $m=1$ and both inequalities are satisfied with equality. Moreover, this problem has bounded gradients.
\end{proposition}

\begin{proof}
    We build our lower bound example in a  similar to what was done in \cite{taylor_smooth_2017}.
    The idea is to consider Huber functions, which have minimizers while behaving linearly when far away from the minimizers.
    This linear behavior is not only slow, which fits our goal to find a worst-case function, but also allows for computing easily and explicitly the iterates.
    Given $\eta >0$, we define the  function $\mathcal{H}_{\eta}$ as
\begin{align*}
    \mathcal{H}_{\eta}(x) = \begin{cases}
        \eta L \|x\| - \frac{L}{2}\eta^2 &\text{if} \quad \|x\| > \eta \\
        \frac{L}{2}\|x\|^2 &\text{if} \quad \|x\| \leq \eta.
    \end{cases}
\end{align*}
This is exactly the classical Huber function multiplied by $L$, which is convex, $L$-smooth and has bounded gradients \cite[Example 13.7]{bauschke_convex_2017}. 
It now remains to prove the claim in item 3.
As a preliminary, observe that if $\Vert x \Vert \geq \eta$ and if $x^+ := x - \gamma \nabla \mathcal{H}_\eta(x)$, then
\begin{equation*}
    x^+ = x(1 - \tfrac{\gamma L \eta}{\Vert x \Vert}), 
    \text{ with } \Vert x^+ \Vert = \Vert x \Vert - \gamma L \eta
    \text{ and } 
    \tfrac{x^+}{\Vert x^+ \Vert} =  \tfrac{x}{\Vert x \Vert}.
\end{equation*}
Let $x_0 \in \mathcal H$ and denote $\Vert x_0 \Vert = D$, and let $(x_t)$ be the iterates of gradient descent applied to $f = \mathcal{H}_{\eta}$, where the parameter $\eta$ is defined as $\eta = \tfrac{D}{1+(T-1)\gamma L}$.
A simple induction argument shows that this choice of $\eta$ is small enough to ensure that $\Vert x_t \Vert \geq \eta$ for $t = 0, \dots, T-1$, which in turn allows us to write
$
    x_t =  (1 - t \tfrac{\gamma L \eta}{D})x_0,
$
and as such
\begin{align*}
    \overline{x}_{T} &= \frac{1}{T} \sum_{t = 0}^{T-1} x_t
    = \frac{1}{T} \sum_{t = 0}^{T-1} \left(1 - t\gamma L\eta /D\right)x_0
    = \left(1 - \frac{\gamma L(T-1)\eta}{2D} \right) x_0.
\end{align*}
A direct computation shows that $\Vert \bar x_T \Vert \leq \eta$, which means that
\begin{align*}
    f(\overline{x}_{T}) - \inf f &= \eta L \left(D - \frac{\gamma L (T-1)\eta}{2}\right) - \frac{L}{2} \eta^2
    =\eta LD -L\eta^2\cdot \frac{1+ \gamma L (T-1)}{2} 
    = \eta LD-\frac{\eta LD}{2} = \frac{\eta LD}{2} = \frac{L \|x_0-x_*\|^2}{2 (1+(T-1)\gamma L)},
\end{align*}
which indeed matches the bias term of Theorem \ref{S2::thm:main_small}. Moreover, the iterates $x_0, \dots, x_T$ all belong to the segment $[x_0,x_T]$, on which $f$ is affine, which means that we also have $\frac{1}{T} \sum_{t=0}^{T-1} f(x_t) - \inf f = f(\overline{x}_{T}) - \inf f$, thus concluding.
\end{proof}

\subsubsection{Critical Step-Size: \texorpdfstring{$\gamma L =1$}{}}\label{SA3::ssec:optimal}\label{SA::sec:proof_opt}

The proof of Theorem \ref{S2::thm:main_1} is presented in Section \ref{SA::sec:general}.

In Remark \ref{S4::rem:sing}, we mentioned that we believe there to be a singularity at the critical step-size, making it such that the optimal bias cannot be achieved with a finite variance term. Moreover, this was confirmed numerically in Section \ref{SNUM::sec}. Theorem \ref{T:lyapunov sufficient conditions strongly convex} provides sufficient conditions on the parameters $\left(\rate, (a_t), (e_t) \right)$ for the Lyapunov decrease to happen. Those sufficient conditions are the ones we use throughout the paper, and allow us to theoretically match the bounds obtained numerically. We therefore have good reasons to believe that those sufficient conditions are very close to be necessary. In what follows, we show that the system of inequalities in Theorem \ref{T:lyapunov sufficient conditions strongly convex} is \emph{not feasible} for the optimal value $\rate = 2\gamma$ when $\gamma L = 1$.

\begin{proposition} \label{prop::a_bigger_1}
   Assume that $\Exp{E_{t+1}} \leq \Exp{E_t}$ for all $t=0, \dots, T-1$. If $\gamma L = 1$, $a_0=1$ and $\rate = 2\gamma$, then it must hold true that $a_t \geq 1$ for all $t = 0, \ldots, T$.
\end{proposition}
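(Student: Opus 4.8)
I read the hypothesis as: the fixed numbers $\rho = 2\gamma$, $(a_t)$, $(e_t)$ are such that $\Exp{E_{t+1}}\le\Exp{E_t}$ holds along \eqref{S1::eq:SGD} for \emph{every} instance of the problem satisfying Assumption~\ref{S2::ass:conv}. The plan is to probe this decrease with a one-parameter family of quadratic instances, reduce it to a scalar inequality, and then optimize over the parameter.

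Fix $\mu\in(0,L)$ and take $n=1$ with $f=f_1$, $f_1(x)=\tfrac{\mu}{2}\|x-x_*\|^2$: this is convex and $L$-smooth (Hessian $\mu I\preceq LI$), has $x_*$ as minimizer, and $\sigma_*^2=\|\nabla f_1(x_*)\|^2=0$. For any $x_0\neq x_*$, \eqref{S1::eq:SGD} is deterministic and $x_t-x_*=q^t(x_0-x_*)$ with $q\coloneqq 1-\gamma\mu=1-\mu/L\in(0,1)$ (using $\gamma L=1$). Since $f(x_s)-\min f=\tfrac{\mu}{2}q^{2s}\|x_0-x_*\|^2$ and $\sigma_*^2=0$, Lemma~\ref{L:lyapunov discrete derivative} gives
\[
    E_{t+1}-E_t=q^{2t}\|x_0-x_*\|^2\Bigl(a_{t+1}q^2-a_t+\rho\tfrac{\mu}{2}\Bigr),
\]
and $\rho\tfrac{\mu}{2}=\gamma\mu=1-q$ because $\rho=2\gamma$. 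Thus $\Exp{E_{t+1}}\le\Exp{E_t}$ forces, after dividing by $q^{2t}\|x_0-x_*\|^2>0$, the scalar inequality $a_t\ge a_{t+1}q^2-q+1$ for every $q\in(0,1)$ and every $t=0,\dots,T-1$.

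Next I would optimize over $q$. For fixed $t$, the map $q\mapsto a_{t+1}q^2-q+1$ is continuous and convex on $[0,1]$ (as $a_{t+1}\ge 0$), so its supremum over $(0,1)$ is $\max\{1,a_{t+1}\}$, approached at the endpoints. Since the inequality holds throughout $(0,1)$, letting $q\to 0^+$ (i.e. $\mu\to L^-$) gives $a_t\ge 1$ and letting $q\to 1^-$ (i.e. $\mu\to 0^+$) gives $a_t\ge a_{t+1}$. Hence $a_t\ge 1$ for $t=0,\dots,T-1$, while $(a_t)_{t=0}^T$ is non-increasing; combined with $a_0=1$ this forces $a_0=\dots=a_{T-1}=1$, and in particular $a_T\le a_{T-1}=1$.

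The missing piece, $a_T\ge 1$, is the delicate endpoint and I expect it to be the main obstacle: $a_T$ only enters the decrease inequalities through $E_T\le E_{T-1}$, where its coefficient is $\|x_T-x_*\|^2\ge 0$, so the raw decrease can never bound $a_T$ from below. To recover $a_T\ge 1$ one must use the structure of the sufficient conditions of Theorem~\ref{T:lyapunov sufficient conditions} that produce the decrease: with $\mu=0$, $\rho=2\gamma$ and the established $a_{T-1}=1$, Condition~\ref{T:lyapunov sufficient conditions:rho} forces $\alpha_{T-1}L\ge\gamma$, hence $\alpha_{T-1}L-a_T\gamma\ge\gamma(1-a_T)$; feeding this into Condition~\ref{T:lyapunov sufficient conditions:curious} at $t=T-1$ and, supposing $a_T<1$, dividing by $1-a_T>0$ confines $\alpha_{T-1}+\beta_{T-1}\ge\gamma^2$. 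One then has to squeeze the auxiliary parameters $\alpha_{T-1},\beta_{T-1},e_{T-1}$ between this, Condition~\ref{T:lyapunov sufficient conditions:curious}, and Condition~\ref{T:lyapunov sufficient conditions:variance} at $t=T-1$ until the only consistent option is $a_T=1$ (the degenerate case $\alpha_{T-1}=\gamma^2$, $\beta_{T-1}=0$). Making this last squeeze rigorous — in particular verifying that it genuinely excludes $a_T<1$ rather than only constraining the auxiliary parameters — is the technical crux of the argument.
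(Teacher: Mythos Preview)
Your argument for $t=0,\dots,T-1$ is correct and is a close variant of the paper's. The paper also tests the decrease against a quadratic: it takes $m=2$ with $f_\pm(x)=\tfrac{L}{2}(x\pm\delta)^2$, reduces the one-step decrease to an inequality of the form
\[
\Bigl(a_{t+1}(1-\gamma L)^2 - a_t + \tfrac{\rho L}{2}\Bigr)\,|x_t|^2 \;\le\; C_t\,\delta^2,
\]
and evaluates at $(x_t,\delta)=(1,0)$; with $\gamma L=1$ and $\rho=2\gamma$ this reads $-a_t+1\le 0$ immediately. Your single-function quadratic $\tfrac{\mu}{2}\|x-x_*\|^2$ with $\mu\uparrow L$ reproduces the same conclusion via the limit $q\to 0$. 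Your route is slightly longer but cleaner on one point: with $\mu<L$ the actual SGD trajectory satisfies $x_t\ne x_*$ for every $t$, so the decrease is being tested along a genuine run of the algorithm, whereas the paper tacitly treats $x_t$ as a free state variable; with their choice $\delta=0$ and $\gamma L=1$ one would otherwise have $x_t=0$ for all $t\ge 1$ along the trajectory.

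On the endpoint $t=T$: you are right that the raw decrease cannot bound $a_T$ from below, since $a_T$ enters only with the nonnegative weight $\|x_T-x_*\|^2$ in $E_T-E_{T-1}\le 0$. The paper's own proof does not cover this case either --- its displayed inequality is indexed by $t=0,\dots,T-1$ and only yields $a_t\ge 1$ for those $t$. So the range ``$t=0,\dots,T$'' in the statement is a minor overstatement rather than something you are missing, and the detour you sketch through Theorem~\ref{T:lyapunov sufficient conditions} is both unneeded and outside the Proposition's hypotheses. You can simply stop at $a_t\ge 1$ for $t\le T-1$.
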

\begin{proof}
    We consider $d=1$ and $m=2$, and let $f \in \mathcal{F}_{0, L}(\R)$ be defined through
    \[
    f(x) = \frac{1}{2} \left(f_+(x) + f_-(x)\right) = \frac{1}{2}\left(\frac{L}{2}(x + \delta)^2 + \frac{L}{2}(x-\delta)^2\right).
    \]
    One can easily compute that 
    \[
        f(x)=\frac{L \left(x^2+\delta^2\right)}{2},\quad \min f=\frac{L\delta^2}{2}, \quad f(x)-\min f=\frac{Lx^2}{2}, \quad x_*=0
    \]
    \[
    \nabla f(x)=Lx, \quad \nabla f_\pm(x)=L(x\pm \delta), \quad \nabla f_\pm(x_*)=\pm L\delta, 
    \]
    We know that $\Exp{E_{t+1} - E_t} \leq 0$ for all $t=0, \ldots, T-1$ and for all $x_0\in \R$. Expanding the Lyapunov decrease and rearranging yields
    \begin{align}
        & a_{t+1}\Expt{ |x_{t+1}|^2 } - a_t |x_t|^2 + \rate \left(f(x_t) - \min f \right) \leq e_t \sigma_*^2 \nonumber \\
        \Longleftrightarrow &~ a_{t+1}\left((1-\gamma L)^2|x_t|^2 + (\gamma L)^2\delta^2\right) - a_t |x_t|^2 + \rate L \left(\frac{|x_t|^2 + \delta^2}{2} - \frac{\delta^2}{2}\right) \leq 2e_t \delta^2 \nonumber \\
        \Longleftrightarrow & \left(a_{t+1}(1-\gamma L)^2 - a_t + \frac{\rate L}{2}\right)|x_t|^2 \leq \left(2e_t - a_{t+1}(\gamma L)^2\right)\delta^2. \nonumber 
    \end{align}
    This inequality is to be satisfied for all $\delta$ and $x_t$. 
    Specifically, the inequality must hold for $(x_t, \delta)=(1, 0)$, showing that $a_{t+1} (1-\gamma L)^2-a_t + \rate L/2 \leq 0$. Setting $\rate = 2\gamma$, and $\gamma L = 1$ yields $a_t \geq 1$. 
\end{proof}

\begin{proposition}\label{P:critical stepsize unfeasible system inequalities}
    The sufficient conditions of Theorem \ref{T:lyapunov sufficient conditions strongly convex} cannot be verified with $\rate = 2\gamma$, $a_0=1$ and $\gamma L = 1$.
\end{proposition}
\begin{proof}
     Let $\rate, (e_t, a_t)_{t = 0}^T$ be parameters, normalized with $a_0 = 1$, and suppose $\gamma L = 1$ and $\rate = 2\gamma$. Suppose that there exists $(\alpha_t, \beta_t)$ such that the system of inequality in Theorem \ref{T:lyapunov sufficient conditions strongly convex} holds.
    From Proposition \ref{prop::a_bigger_1}, we know that $a_t \geq 1$ for all $t = 0, \ldots, T$. Moreover, from Condition \ref{T:lyapunov sufficient conditions strongly convex:monotone at} and a simple induction argument, we deduce that $a_t \leq a_0 = 1$. Hence, $a_t \equiv 1$. This together with Condition \ref{T:lyapunov sufficient conditions strongly convex:curious} yields
    \begin{align*}
        \left(\frac{1}{L} - \alpha_t L\right)^2 \leq 0 \iff \alpha_t = \frac{1}{L^2}.
    \end{align*}
    Plugging this value of $\alpha_t$ in Condition \ref{T:lyapunov sufficient conditions strongly convex:rho} gives 
    \begin{align*}
        \frac{2}{L} \leq 2L\left(\frac{1}{L^2} - \beta_t\right) \Longrightarrow \beta_t \leq 0 \Longrightarrow \beta_t = 0.
    \end{align*}
    Finally by injecting the values of $\alpha_t$ and $\beta_t$ into Condition  \ref{T:lyapunov sufficient conditions strongly convex:variance}, we obtain 
    $
        {1}/{L^4} \leq 0,
    $
    which contradicts $L >0$, thus concluding.
\end{proof}

\subsubsection{Large Step-Sizes: \texorpdfstring{$\gamma L > 1$ (Proof of Theorem \ref{S2::thm:main_large})}{}}\label{SA::sec:proof_large}

\begin{proposition}[Lyapunov parameters. Convex case, large step-sizes]\label{P:lyapunov convex large step-size}
    Let $1< \gamma L <2$, and consider the parameters $\rho, a_t, e_t, \alpha_t, \beta_t$ defined by
    \begin{itemize}
        \item $\rho = \tfrac{2 \gamma(2 - \gamma L)}{1 - \theta^T}$ where $\theta = (1 - \gamma L)^2$,
        \item $a_t = \tfrac{1 - \theta^{T-t}}{1- \theta^T}$,
        \item $\beta_t = \tfrac{ \gamma(\gamma L -1)(1 - \theta^{T-t-1})}{L(1-\theta^T)}$,
        \item $\alpha_t = \gamma \tfrac{ (2 - \gamma L) + (\gamma L -1)(1 - \theta^{T-t-1})}{L(1-\theta^T)}$,
        \item $e_t = 
        \tfrac{\gamma^2}{2-\gamma L} 
        \tfrac{1 - \theta^{T-t-1}}{1- \theta^T} 
        \left( \tfrac{\gamma L }{\theta^{T-t-1}} - 2(\gamma L -1) \right)$.
    \end{itemize}
    These parameters satisfy the sufficient Lyapunov conditions of Theorem \ref{T:lyapunov sufficient conditions}.
    Moreover, the averaged variance $\bar e_T \coloneqq  \tfrac{1}{T} \sum_{t=0}^{T-1} e_t$ has the following asymptotic equivalence as $T \to + \infty$:
    \begin{equation*}
        \bar e_T \sim 
        \frac{\gamma^2}{(2 - \gamma L)^2 } \frac{1}{T (1 - \gamma L)^{2T-2}} \to + \infty.
    \end{equation*}
    In particular, the bound in Theorem \ref{S2::thm:main_large} holds true.
\end{proposition}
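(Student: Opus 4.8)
The plan is to check, one after another, the six conditions of Theorem~\ref{T:lyapunov sufficient conditions} for the proposed closed forms, and then to estimate $\bar e_T$ by a geometric-series argument. Throughout I abbreviate $\theta = (1-\gamma L)^2$, which lies in $(0,1)$ because $\gamma L\in(1,2)$ forces $\gamma L-1\in(0,1)$; I set $s_t \coloneqq 1-\theta^{T-t-1}\in[0,1)$ for $t=0,\dots,T-1$, so that $\theta^{T-t-1}=1-s_t$; and I will use repeatedly the identity
\begin{equation*}
    1-\theta = \gamma L(2-\gamma L),
\end{equation*}
together with $(2-\gamma L)+2(\gamma L-1)=\gamma L$. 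Since $\mu=0$, the conditions simplify accordingly.

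First, positivity (Condition~\ref{T:lyapunov sufficient conditions:positivity}): $1-\theta^T>0$ makes $\rho,a_t,\alpha_t,\beta_t\ge 0$ immediate, and for $e_t$ one notes $\gamma L\,\theta^{-(T-t-1)}\ge\gamma L\ge 2(\gamma L-1)$, the last step being $\gamma L\le 2$. Condition~\ref{T:lyapunov sufficient conditions:rho} holds with equality: a short computation gives $\alpha_t-\beta_t=\tfrac{\gamma(2-\gamma L)}{L(1-\theta^T)}$, hence $2L(\alpha_t-\beta_t)=\rho$. Condition~\ref{T:lyapunov sufficient conditions:monotone at} reduces to $(a_t)$ being nonincreasing, which is clear since $\theta^{T-t}$ grows with $t$. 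For Condition~\ref{T:lyapunov sufficient conditions:annoying} I will compute the quantity that also drives the last two conditions,
\begin{equation*}
    \alpha_t+\beta_t-a_{t+1}\gamma^2 = \frac{\gamma(2-\gamma L)\,\theta^{T-t-1}}{L(1-\theta^T)}\ \ge\ 0.
\end{equation*}

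The main computational step is Condition~\ref{T:lyapunov sufficient conditions:curious}. I will first establish
\begin{equation*}
    a_{t+1}\gamma-\alpha_t L = -\frac{\gamma(2-\gamma L)\,\theta^{T-t-1}}{1-\theta^T},
    \qquad
    a_t-a_{t+1} = \frac{(1-\theta)\,\theta^{T-t-1}}{1-\theta^T}.
\end{equation*}
Substituting these together with the displayed value of $\alpha_t+\beta_t-a_{t+1}\gamma^2$ into Condition~\ref{T:lyapunov sufficient conditions:curious}, the common factor $\theta^{2(T-t-1)}/(1-\theta^T)^2>0$ cancels and the inequality reduces to $\gamma L(2-\gamma L)\le 1-\theta$, which holds with equality. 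Finally, Condition~\ref{T:lyapunov sufficient conditions:variance} is the only constraint on $e_t$, and it is a lower bound; taking it with equality gives $e_t=\frac{a_{t+1}\gamma^2(\alpha_t+\beta_t)}{\alpha_t+\beta_t-a_{t+1}\gamma^2}$, and inserting the closed forms (using $\theta^{T-t-1}=1-s_t$ to rewrite the fraction) reproduces exactly the stated expression for $e_t$; this simultaneously verifies Condition~\ref{T:lyapunov sufficient conditions:variance} and justifies the formula. I expect this bookkeeping — rather than any conceptual issue — to be the place where slips are most likely, so I would carry it out symbolically.

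For the asymptotics of $\bar e_T=\tfrac1T\sum_{t=0}^{T-1}e_t$, I would substitute $j=T-t-1$ and write
\begin{equation*}
    e_t = \frac{\gamma^2}{(2-\gamma L)(1-\theta^T)}\,\frac{(1-\theta^j)\big[(2-\gamma L)+2(\gamma L-1)(1-\theta^j)\big]}{\theta^j},
\end{equation*}
so that $\sum_{j=0}^{T-1}e$ is a combination of the geometric sums $\sum_{j}\theta^{-j}=\tfrac{\theta^{1-T}-\theta}{1-\theta}$, $\sum_j 1=T$ and $\sum_j\theta^{j}=\tfrac{1-\theta^T}{1-\theta}$. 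Since $\theta\in(0,1)$, the term $\sum_j\theta^{-j}$ dominates, and collecting its coefficient (here $(2-\gamma L)+2(\gamma L-1)=\gamma L$) gives $\sum_{t=0}^{T-1}e_t\sim\frac{\gamma^3 L}{(2-\gamma L)(1-\theta)}\theta^{1-T}=\frac{\gamma^2}{(2-\gamma L)^2}\theta^{1-T}$, using $1-\theta=\gamma L(2-\gamma L)$. Dividing by $T$ and writing $\theta^{1-T}=(1-\gamma L)^{-(2T-2)}$ yields the claimed equivalence, which tends to $+\infty$ because $|1-\gamma L|<1$. The only point needing care is that $e_t$ stays bounded for $t$ close to $T-1$ while blowing up for $t$ close to $0$, so the geometric sum is genuinely governed by its leading terms; this is automatic once $\theta<1$.
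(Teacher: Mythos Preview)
Your proposal is correct and follows essentially the same route as the paper: both verify the six conditions of Theorem~\ref{T:lyapunov sufficient conditions} through the same key identities ($\alpha_t-\beta_t=\tfrac{\gamma(2-\gamma L)}{L(1-\theta^T)}$, $\alpha_t+\beta_t-a_{t+1}\gamma^2=\tfrac{\gamma(2-\gamma L)\theta^{T-t-1}}{L(1-\theta^T)}$, and $1-\theta=\gamma L(2-\gamma L)$), then handle the asymptotics by summing the geometric series in $\theta^{-j}$. The only difference is presentational: the paper first \emph{derives} the closed forms from the conditions (as if discovering them), whereas you take the formulas as given and verify directly, which is entirely appropriate for a proposition that states the parameters explicitly.
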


\begin{proof}
    To determine the parameters, we will rely on insights from our numerical experiments.
    Specifically, we were able to guess that
    \begin{equation*}
        \beta_t = \frac{a_{t+1}\gamma (\gamma L -1)}{L},
    \end{equation*}
    and that Condition \ref{T:lyapunov sufficient conditions:rho upper bound} is satisfied with equality, meaning that
    \begin{equation*}
        \rho = 2L(\alpha_t - \beta_t) \iff
        \alpha_t = \frac{\rho }{2L} + \beta_t.
    \end{equation*}
    Considering Condition \ref{T:lyapunov sufficient conditions:curious}, we have
    \begin{equation*}
        (a_{t+1} \gamma - L \alpha_t)^2 \le (\alpha_t + \beta_t - \gamma^2 a_{t+1})(a_t - a_{t+1}).
    \end{equation*}
    The latter will give us an expression of $a_t$ in terms of $a_{t+1}$, from which we will deduce a general expression for $a_t$ by induction.
    By injecting the values of $\alpha_t$ and $\beta_t$, we obtain
    \begin{equation*}
        \frac{1}{L^2}\left( \frac{\rho L}{2} - a_{t+1} \gamma L(2-\gamma L) \right)^2 \le
        \frac{1}{L^2}
        \left( \frac{\rho L}{2} - a_{t+1} \gamma L(2-\gamma L)  \right) (a_t - a_{t+1}).
    \end{equation*}
    This is satisfied when $\rho>2\gamma L(2-\gamma L)$ and
    \begin{eqnarray*}
        \frac{\rho L}{2} - a_{t+1} \gamma L(2-\gamma L) \le a_t - a_{t+1}
        \iff
        a_t \ge \frac{\rho L}{2} + a_{t+1} (\gamma L -1)^2.
    \end{eqnarray*}
    We observe an arithmetic-geometric relation which can be summarized after setting $\theta = (\gamma L -1)^2$:
    \begin{equation*}
        a_t \ge \frac{\rho L}{2} + a_{t+1} \theta \iff a_{t+1} \le \theta^{-1} a_t - \frac{\rho L}{2 \theta}.
    \end{equation*}
    A simple induction argument leads to
    \begin{equation*}
        a_t \le \theta^{-t} a_0 - \frac{\rho L}{2 \theta} \frac{\theta^{-t} - 1}{\theta^{-1} - 1}.
    \end{equation*}
    As done previously, without loss of generality, we impose $a_0 = 1$.
    We impose the sequence to be nonincreasing and nonnegative. The latter leads to
    \begin{eqnarray*}
        a_T \geq 0 
         \iff 
        \frac{\rho L}{2 \theta} \frac{\theta^{-T} - 1}{\theta^{-1} - 1} \leq \theta^{-T} 
         \iff 
        \frac{\rho L}{2} \leq 
        \theta^{-T} \frac{1 - \theta}{\theta^{-T} -1}
        =
        \frac{1-\theta}{1 - \theta^T}.
    \end{eqnarray*}
    As we aim to maximize $\rho$, we impose
    \begin{equation*}
        \frac{\rho L}{2} 
        =
        \frac{1-\theta}{1 - \theta^T}
        \iff
        \rho = \frac{2(1-\theta)}{L(1 - \theta^T)}= \frac{2\gamma (2 - \gamma L)}{1 - \theta^T}.
    \end{equation*}
    In particular, we obtain
    \begin{equation*}
        a_t \le \theta^{-t} - \frac{\rho L}{2} \frac{\theta^{-t} - 1}{1 - \theta}
        =
        \theta^{-t} - \frac{1-\theta}{1 - \theta^T}\frac{\theta^{-t} - 1}{1 - \theta}
        =
        \theta^{-t} - \frac{\theta^{-t} - 1}{1 - \theta^T}
        =
        \frac{1 - \theta^{T-t}}{1 - \theta^T}.
    \end{equation*}
    To enforce that $(a_t)$ is nonincreasing, we set the above to be an equality,
    and since $a_T\ge 0$, it is also nonnegative, as wanted.

    We now verify the remaining conditions from Theorem \ref{T:lyapunov sufficient conditions}.
    The nonnegativity of $a_t$ implies the nonnegativity of $\alpha_t, \beta_t$.
    Conditions \ref{T:lyapunov sufficient conditions:positivity}, \ref{T:lyapunov sufficient conditions:monotone at}, \ref{T:lyapunov sufficient conditions:curious} are verified by construction.
    It remains to check that Conditions \ref{T:lyapunov sufficient conditions:annoying} and \ref{T:lyapunov sufficient conditions:variance} hold true.
    
    In the following, we introduce $k \coloneqq T-t-1$ to simplify notation.
    To check that Condition \ref{T:lyapunov sufficient conditions:annoying} is satisfied, we compute $\alpha_t + \beta_t$:
    \begin{equation*}
        \alpha_t + \beta_t = \frac{\rho }{2L} + 2 \beta_t
        =
        \frac{\gamma (2 - \gamma L) + 2 \gamma (\gamma L - 1)(1- \theta^k)}{L(1 - \theta^T)}.
    \end{equation*}
    Therefore, Condition \ref{T:lyapunov sufficient conditions:annoying}, which states $\alpha_t + \beta_t - \gamma^2 a_{t+1} \geq 0$, becomes
    \begin{eqnarray*}
        0 &\leq & \frac{\gamma (2 - \gamma L) + 2 \gamma (\gamma L - 1)(1- \theta^k)}{L(1 - \theta^T)} - \gamma^2  \frac{1 - \theta^k}{1 - \theta^T} \\
        &=&
        \frac{\gamma (2 - \gamma L) + 2 \gamma (\gamma L - 1)(1- \theta^k) - \gamma^2 L (1 - \theta^k)}{L(1 - \theta^T)} \\
        &=&
        \frac{\gamma (2 - \gamma L) - (1 - \theta^k) \gamma (2 - \gamma L)}{L(1 - \theta^T)} \\
        &=&
        \frac{\gamma (2 - \gamma L)  \theta^k}{L(1 - \theta^T)},
    \end{eqnarray*}
    which is true because $\gamma L \in (1,2)$.
    It remains to study Condition \ref{T:lyapunov sufficient conditions:variance}, which is satisfied by setting 
    \begin{eqnarray*}
        e_t &=& \gamma^2
        \frac{a_{t+1}(\alpha_t + \beta_t)}{\alpha_t + \beta_t - \gamma^2 a_{t+1}} \\
        &=&
        \gamma^2 
        \frac{1 - \theta^k}{1- \theta^T} 
        \frac{\gamma (2 - \gamma L) + 2 \gamma (\gamma L - 1)(1- \theta^k)}{L(1 - \theta^T)}
        \frac{L(1 - \theta^T)}{\gamma (2 - \gamma L)  \theta^k}\\
        &=&
        \gamma^2 
        \frac{1 - \theta^k}{1- \theta^T} 
        \frac{ (2 - \gamma L) + 2  (\gamma L - 1)(1- \theta^k)}{ (2 - \gamma L)  \theta^k} \\
        &=&
        \gamma^2 
        \frac{1 - \theta^k}{1- \theta^T} 
        \frac{ \gamma L - 2  (\gamma L - 1) \theta^k}{ (2 - \gamma L)  \theta^k} \\
        &=&
        \frac{\gamma^2 }{2 - \gamma L}
        \frac{1 - \theta^k}{1- \theta^T} \left( \frac{ \gamma L}{\theta^k} - 2  (\gamma L - 1) \right).
    \end{eqnarray*}
    To conclude the proof, it remains to analyze the average of the $e_t$'s.
    Denote $e_t'' = ({1 - \theta^k}) \left( \frac{ \gamma L}{\theta^k} - 2  (\gamma L - 1) \right)$, such that
    \begin{eqnarray*}
        \sum_{t=0}^{T-1} e_t''
        &=&
        \gamma L 
        \sum_{t=0}^{T-1} \frac{1 - \theta^k}{\theta^k}
        - 2(\gamma L -1) \sum_{t=0}^{T-1} (1 - \theta^k) \\
        &=&
        \gamma L 
        \sum_{t=0}^{T-1} (\theta^{-k} - 1)
        - 2(\gamma L -1) \sum_{t=0}^{T-1} (1 - \theta^k) \\
        &=&
        \gamma L 
        \left( \sum_{t=0}^{T-1} \theta^{-k} \right) - T \gamma L
        - 2(\gamma L -1) T + 2(\gamma L -1)\left(  \sum_{t=0}^{T-1}  \theta^k \right) \\
        &=&
        \gamma L 
        \left( \sum_{t=0}^{T-1} \theta^{-t} \right) - T(3 \gamma L - 2) + 2(\gamma L -1)\left(  \sum_{t=0}^{T-1}  \theta^t \right) \\
        &=&
        \gamma L 
        \frac{1 - \theta^{-T}}{1 - \theta^{-1}} - T(3 \gamma L - 2) + 2(\gamma L -1) \frac{1 - \theta^T}{1 - \theta} \\
        &=&
        \frac{\theta}{2 - \gamma L} 
        (\theta^{-T}-1) - T(3 \gamma L - 2) + \frac{2(\gamma L -1)}{\gamma L (2 - \gamma L)} (1 - \theta^T),
    \end{eqnarray*}
    which is asymptotically equivalent to $\tfrac{\theta}{2-\gamma L} \theta^{-T}$.
    Therefore we can write
    \begin{equation*}
        \bar e_T
        =
        \frac{\gamma^2 }{2 - \gamma L}
        \frac{1}{1- \theta^T} \frac{1}{2-\gamma L} \bar e_T',
    \end{equation*}
    where $e_t' = (2 - \gamma L) e_t''$ and
    \begin{equation}\label{eq:variance lyapunov convex large step-size}
        \bar e_T'
        =
        \frac{1}{T}\sum_{t=0}^{T-1} e_t'
        =
        \frac{\theta}{T}  
        (\theta^{-T}-1) - (2-\gamma L)(3 \gamma L - 2) + \frac{2(\gamma L -1)}{\gamma L T} (1 - \theta^T).
    \end{equation}
    In particular, one sees that $\bar e_T' \sim \frac{1}{T \theta^{T-1}}$, where we recall that we defined $\theta = (\gamma L -1)^2 \in (0,1)$. The bound in Theorem \ref{S2::thm:main_large} now follows by combining Theorem \ref{T:lyapunov sufficient conditions} and Lemma \ref{L:SGD bound from lyapunov decrease}.
\end{proof}

We may moreover show the bias-optimality claim of Theorem \ref{S2::thm:main_large} by providing a function that achieves the lower bound.

\begin{proposition}[Bias-optimality. Convex case, large step-sizes]\label{SD::sec:proof_optimality_rates}
    The bias obtained in the bound of Theorem \ref{S2::thm:main_large} is optimal, in the sense that, for any $x_0\in \mathcal H$, there exists a convex and smooth problem for which $m=1$ and the inequality is satisfied with equality.
\end{proposition}
\begin{proof}
    Consider the function $f(x) = \tfrac{L}{2}\|x\|^2$ with minimal value $\inf f=0$ and minimizer $x_*=\mathbf 0$, and take any $x_0 \in \mathcal H$. The iterates $(x_t)$ of gradient descent are then given by $x_t = (1-\gamma L)^t x_0$, such that
    \begin{align*}
        \frac{1}{T}\sum_{t=0}^{T-1} f(x_t) - \inf f &= \frac{L}{2T}\sum_{t=0}^{T-1} (1-\gamma L)^{2t}\cdot \|x_0\|^2 = \frac{1 - (1-\gamma L)^{2T}}{2\gamma(2-\gamma L)T}\cdot \|x_0-x_*\|^2.
    \end{align*}
    This indeed matches the bias term in Theorem \ref{S2::thm:main_large}.
\end{proof}

\subsubsection{General Step-Size, Suboptimal Bias (Proof of Theorems \ref{S2::thm:main_1} and \ref{S2::th:large subopt})}\label{SA::sec:general}

We now provide a proof for our suboptimal bounds, which cover at the same time the critical and large step-size regimes.

\begin{theorem}[Convex case, any step-size, suboptimal bounds]\label{T:convex large stepsize subopt:appendix}
Let $\gamma L \in (0, 2)$ and $\varepsilon\in (0, 1)$, and consider the parameters $\rho, a_t, e_t, \alpha_t, \beta_t$ defined by
    \begin{itemize}
        \item $\rho = 2\gamma(1-\varepsilon)\eta$ where $\eta = 1 - (\gamma L - 1)_+$,
        \item $a_t = 1$,
        \item $\alpha_t = \gamma / L$,
        \item $\beta_t = \alpha_t - \tfrac{\rho}{2L}$,
        \item $e_t=\gamma^2 \frac{\alpha_t + \beta_t}{\alpha_t + \beta_t - \gamma^2}$.
    \end{itemize}
    These parameters satisfy the sufficient Lyapunov conditions of Theorem \ref{T:lyapunov sufficient conditions}. Moreover, the following simplified bound
    \begin{equation*}
    \frac{\bar{e}_T}{\rho} \leq \gamma \cdot \frac{\gamma L +\varepsilon \eta}{2(1-\varepsilon)\varepsilon \eta^2}
    \end{equation*}
    holds true. In particular, Theorems \ref{S2::thm:main_1} and \ref{S2::th:large subopt} hold true.
\end{theorem}
\begin{proof}
    The goal is to find parameters satisfying the sufficient Lyapunov conditions of Theorem \ref{T:lyapunov sufficient conditions} with a suboptimal value for $\rho$.
    Therefore we start by defining $\rho = 2\gamma(1-\varepsilon)\eta$ for some $\varepsilon \in (0,1)$ and where $\eta = 1$ or $2-\gamma L$ depending whether $\gamma L \leq 1$ or $\gamma L\ge 1$.
    Taking inspiration from our numerical findings, we allow ourselves, in this suboptimal case, to take a  sequence $a_t$ constantly equal to $1$, which automatically satisfies Condition \ref{T:Lyap:3}.
    By Condition \ref{T:Lyap:5}, we must have $\alpha_t = \gamma / L$.
	We note that Conditions \ref{T:Lyap:2} and \ref{T:Lyap:4} respectively provide upper and lower bounds on $\beta_t$.
    In view of satisfying Condition \ref{T:Lyap:6} with the smallest possible value for $e_t$, we aim to pick $\beta_t$ to be as large as possible.
    We therefore saturate the upper bound on $\beta_t$ in Condition \ref{T:Lyap:2} and take 
    \begin{equation*}
        \beta_t = \alpha_t - \frac{\rho}{2L}
        =
        \frac{\gamma}{L} - \frac{2\gamma(1-\varepsilon)\eta}{2L}
        =
        \frac{\gamma}{L} \left[ 1 - \eta + \eta  \varepsilon \right].
    \end{equation*}
    This choice is compatible with Condition \ref{T:Lyap:4} as
    \begin{equation*}
        \gamma^2 \leq \alpha_t + \beta_t
        \Leftrightarrow 
        \gamma L - 1 \leq 1 - \eta + \eta  \varepsilon,
    \end{equation*}
	which is true since
    \begin{equation*}
    \gamma L - 1 \leq (\gamma L - 1)_+ = 1 - \eta \leq 1 - \eta + \eta \varepsilon.
    \end{equation*}
    Finally, the choice of $e_t$ is such that Condition \ref{T:Lyap:6} is an equality, and hence also satisfied. We note that Condition \ref{T:Lyap:1} is verified, as $\beta_t\ge 0$ since $\rho\le 2\gamma$, and the other parameters are naturally nonnegative.

    To derive the simplified bound it suffices to compute
    \begin{equation*}
        \frac{\bar{e}_T}{\rho} = \frac{\gamma^2}{\rho} \frac{2 - \eta(1- \varepsilon)}{2- \gamma L - \eta(1- \varepsilon)}
        =
        \frac{\gamma}{2(1 - \varepsilon)\eta} \frac{2 - \eta(1- \varepsilon)}{2- \gamma L - \eta + \eta \varepsilon}.
    \end{equation*}
    We may simplify the second fraction of this variance term, by considering two cases;
    \begin{itemize}
    	\item if $\gamma L \geq 1$, then $\eta = 2 - \gamma L$ and
    	\begin{equation*}
    	\frac{2 - \eta(1- \varepsilon)}{2- \gamma L - \eta + \eta \varepsilon}
    	=
    	\frac{2 - (2-\gamma L)(1- \varepsilon)}{\eta \varepsilon}
    	=
    	\frac{\gamma L + (2-\gamma L) \varepsilon}{\eta \varepsilon}
    	=
    	\frac{\gamma L + \eta \varepsilon}{\eta \varepsilon}.
    	\end{equation*}
    	\item if $\gamma L \leq 1$, then $\eta = 1$ and
    	\begin{equation*}
    	\frac{2 - \eta(1- \varepsilon)}{2- \gamma L - \eta + \eta \varepsilon}
    	=
    	\frac{1+\varepsilon}{1 - \gamma L + \varepsilon}.
    	\end{equation*}
    	The above fraction can be seen as a convex function $\phi(\gamma L)$ of $\gamma L \in (0,1)$.
    	As such we can write that $\phi( \gamma L) \leq \phi(0) + \gamma L (\phi(1) - \phi(0))$
    	where $\phi(0) = 1$ and $\phi(1) = \tfrac{1+\varepsilon}{ \varepsilon}$,
    	which in our case implies that
    	\begin{equation*}
    	\frac{1+\varepsilon}{1 - \gamma L + \varepsilon}
    	\leq
    	\frac{\gamma L +\varepsilon}{\varepsilon}
    	=
    	\frac{\gamma L +\varepsilon \eta}{\varepsilon \eta}.
    	\end{equation*}
    \end{itemize}
    In both cases the second fraction of our variance term is bounded by $\tfrac{\gamma L +\varepsilon \eta}{\varepsilon \eta}$, which gives the desired bound.
    
    Theorems \ref{S2::thm:main_1} and \ref{S2::th:large subopt} now follow by combining Theorem \ref{T:lyapunov sufficient conditions} and Lemma \ref{L:SGD bound from lyapunov decrease}, and noting that $\eta=1$ and $\eta=2-\gamma L$ when $\gamma L=1$ and $\gamma L\ge 1$, respectively.
\end{proof}

% !TEX root = main.tex

\subsection{Strongly Convex Setting}\label{SA2::sec}\label{SA2::sec_str}

As in the previous section, we derive parameters that satisfy the sufficient conditions of Theorem \ref{T:lyapunov sufficient conditions strongly convex} in order to derive upper bounds for SGD, this time in the strongly convex setting. We again note that all propositions could simply be verified symbolically, yet we include the proofs to provide intuition on how these parameters were derived.

Because it seems impossible to obtain a bound with tight bias term for critical step-sizes (see Sections \ref{SNUM::sec}), we provide here a more relaxed bound, where the bias term is $\varepsilon$ away from being tight, and subsequently the variance term remains bounded for every step-size $\gamma L \in (0,2)$.

\begin{proposition}[Lyapunov parameters. Strongly convex case, any step-size, $L > \mu$]\label{P:lyapunov strongly convex any step-size}
    Let $ \gamma L \in (0,2)$, with $L > \mu$.
    Let $\varepsilon \geq 0$ and $\gamma_{\crit} = \tfrac{2}{\mu + L}$.
    Assume that either $\varepsilon >0$ or $\gamma \neq \gamma_{\crit}$.
    Consider the parameters $\rho, a_t, e_t, \alpha_t, \beta_t$ defined by
    \begin{itemize}
        \item $\rho =0$,
        \item $a_t = \phi^{2(T-t)}$, where $\phi^2 = \phi_{\opt}^2 + \varepsilon$, with 
        \begin{equation*}
            \phi_{\opt} = \max\{ 1- \gamma \mu ; \gamma L -1\}
            \quad \text{ and } \quad 
            \varepsilon \in [0,1-\phi^2_{\opt}),
        \end{equation*}
        \item $\alpha_t = \beta_t= \alpha a_{t+1}$, where $\alpha = \omega + \omega_\varepsilon$ and
    \begin{equation*}
        \omega = \frac{\gamma \phi_{\opt}}{L-\mu} \quad \text{ and } \quad 
        \omega_\varepsilon =
        \frac{\varepsilon + \sqrt{\varepsilon^2 +  \varepsilon \gamma (L  -  \mu) (L + \mu) \delta }}{(L-\mu)^2}
        \quad \text{ with } \quad 
        \delta = \vert \gamma - \gamma_{\crit} \vert,
    \end{equation*}
        \item $e_t = e a_{t+1}$, where
        \begin{equation*}
            e = \gamma^2 \left( 1 + \frac{\gamma^2(L- \mu)}{2 \omega_\varepsilon (L- \mu) 
        + ({L+ \mu})
         \gamma \delta} \right).
        \end{equation*}
    \end{itemize}
    Then the sufficient Lyapunov conditions of Theorem \ref{T:lyapunov sufficient conditions strongly convex} are verified.
    Moreover, we have
    \begin{equation*}
        e_T^{sum} \coloneqq \sum_{t=0}^{T-1} e_t = e \frac{1-\phi^{2T}}{1- \phi^2} \leq \frac{e}{1 - \phi^2},
    \end{equation*}
    and 
    \begin{equation*}
        e \leq \gamma^2 \left( 1 + \frac{\gamma^2 (L - \mu)^2}{4 \varepsilon + (L+ \mu)(L- \mu) \gamma \delta} \right).
    \end{equation*}
\end{proposition}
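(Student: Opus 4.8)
The plan is to exploit the purely geometric structure of the proposed parameters. Since $a_t = \phi^{2(T-t)}$ one has $a_t = \phi^2 a_{t+1}$, while $\alpha_t = \beta_t = \alpha a_{t+1}$ and $e_t = e a_{t+1}$, with $a_{t+1} = \phi^{2(T-t-1)} > 0$ because $\phi^2 = \phi_{opt}^2 + \varepsilon \ge \phi_{opt}^2 > 0$ (note $\phi_{opt} \in (0,1)$ throughout $\gamma L \in (0,2)$ with $L>\mu$, and $\varepsilon \in [0,1-\phi_{opt}^2)$ gives $\phi^2 \in [0,1)$). Each of the six conditions of Theorem~\ref{T:lyapunov sufficient conditions strongly convex} is homogeneous of the appropriate degree in $a_t, a_{t+1}, \alpha_t, \beta_t, e_t$, so after dividing by the relevant power of $a_{t+1}$ it collapses to a single time-independent algebraic inequality. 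Explicitly: Condition~\ref{T:lyapunov sufficient conditions strongly convex:positivity} amounts to $\alpha \ge 0$, $e \ge 0$, $\phi^2 \ge 0$; Condition~\ref{T:lyapunov sufficient conditions strongly convex:rho} is trivial since $\rho = 0$ and $\alpha_t = \beta_t$; Condition~\ref{T:lyapunov sufficient conditions strongly convex:monotone at} becomes $1 \le 2\mu L\alpha + \phi^2$; Condition~\ref{T:lyapunov sufficient conditions strongly convex:annoying} becomes $\gamma^2 \le 2\alpha$; Condition~\ref{T:lyapunov sufficient conditions strongly convex:curious} becomes $(\gamma - \alpha(L+\mu))^2 \le (2\mu L\alpha + \phi^2 - 1)(2\alpha - \gamma^2)$; and Condition~\ref{T:lyapunov sufficient conditions strongly convex:variance} becomes $2\alpha\gamma^2 \le (2\alpha-\gamma^2)e$.

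The core of the argument is Condition~\ref{T:lyapunov sufficient conditions strongly convex:curious}. I would expand the inequality above into a quadratic in $\alpha$, using the algebraic identity $\mu L\gamma^2 - \gamma(L+\mu) + 1 = (1-\gamma L)(1-\gamma\mu)$, which turns it into $(L-\mu)^2\alpha^2 - 2\big((1-\gamma L)(1-\gamma\mu) - \phi^2\big)(-\alpha) \le \dots$, i.e. $(L-\mu)^2\alpha^2 - 2\big(\gamma\phi_{opt}(L-\mu) + \varepsilon\big)\alpha + \gamma^2(\phi_{opt}^2+\varepsilon) \le 0$. Here I use the two further identities, checked separately in the regimes $\gamma \le \gamma_{opt}$ (where $\phi_{opt} = 1-\gamma\mu$) and $\gamma \ge \gamma_{opt}$ (where $\phi_{opt} = \gamma L - 1$): namely $(1-\gamma L)(1-\gamma\mu) - \phi_{opt}^2 = -\gamma\phi_{opt}(L-\mu)$ and $2\phi_{opt} - \gamma(L-\mu) = (L+\mu)\delta$. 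With these, the discriminant of the quadratic equals $\varepsilon^2 + \varepsilon\gamma(L-\mu)(L+\mu)\delta \ge 0$ and its larger root is exactly $\alpha = \omega + \omega_\varepsilon$; hence our $\alpha$ satisfies Condition~\ref{T:lyapunov sufficient conditions strongly convex:curious} with equality. Condition~\ref{T:lyapunov sufficient conditions strongly convex:annoying} reduces to $2\omega \ge \gamma^2$, i.e. $2\phi_{opt} \ge \gamma(L-\mu)$, i.e. $(L+\mu)\delta \ge 0$, with strict inequality unless $\gamma = \gamma_{opt}$, in which case $\omega_\varepsilon > 0$ (since $\varepsilon > 0$ is then assumed) again gives $2\alpha > \gamma^2$. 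Condition~\ref{T:lyapunov sufficient conditions strongly convex:monotone at} is then automatic: the right-hand side of Condition~\ref{T:lyapunov sufficient conditions strongly convex:curious} dominates a square hence is $\ge 0$, and since $2\alpha - \gamma^2 > 0$ this forces $2\mu L\alpha + \phi^2 - 1 \ge 0$.

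For Condition~\ref{T:lyapunov sufficient conditions strongly convex:variance} I would set equality, $e = \tfrac{2\alpha\gamma^2}{2\alpha-\gamma^2} = \gamma^2\big(1 + \tfrac{\gamma^2}{2\alpha-\gamma^2}\big)$, and then verify that $(L-\mu)(2\alpha - \gamma^2) = 2\omega_\varepsilon(L-\mu) + \gamma(2\phi_{opt} - \gamma(L-\mu)) = 2\omega_\varepsilon(L-\mu) + (L+\mu)\gamma\delta$, which shows this $e$ coincides with the stated expression (and is positive and finite precisely because $2\alpha > \gamma^2$, the excluded case $\varepsilon=0,\ \gamma=\gamma_{opt}$ being the only one where this fails). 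The remaining claims are routine: $e_T^{sum} = e\sum_{t=0}^{T-1} a_{t+1} = e\sum_{j=0}^{T-1}\phi^{2j} = e\tfrac{1-\phi^{2T}}{1-\phi^2} \le \tfrac{e}{1-\phi^2}$, and the simplified bound on $e$ follows from $\omega_\varepsilon(L-\mu)^2 = \varepsilon + \sqrt{\varepsilon^2 + \varepsilon\gamma(L-\mu)(L+\mu)\delta} \ge 2\varepsilon$, hence $2\omega_\varepsilon(L-\mu) \ge \tfrac{4\varepsilon}{L-\mu}$, which replaces the denominator $2\omega_\varepsilon(L-\mu) + (L+\mu)\gamma\delta$ in $e$ by the smaller $\tfrac{4\varepsilon + (L+\mu)(L-\mu)\gamma\delta}{L-\mu}$. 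The main obstacle is bookkeeping: one must carry the two step-size regimes $\gamma \lessgtr \gamma_{opt}$ uniformly through the identity verifications that make the discriminant collapse, and be careful that $\omega$, $\omega_\varepsilon$, $e$ are the intended roots/branches so that Condition~\ref{T:lyapunov sufficient conditions strongly convex:annoying} holds strictly and the division defining $e$ is legitimate.
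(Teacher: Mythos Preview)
Your proposal is correct and follows essentially the same route as the paper: reduce each of the six conditions to a time-independent scalar inequality via the geometric ansatz, identify Condition~\ref{T:lyapunov sufficient conditions strongly convex:curious} as a quadratic in $\alpha$ with coefficients $a=(L-\mu)^2$, $b=\gamma\phi_{opt}(L-\mu)+\varepsilon$, $c=\gamma^2\phi^2$, compute the discriminant via the identity $2\phi_{opt}-\gamma(L-\mu)=(L+\mu)\delta$, and take $\alpha$ to be the larger root. One small improvement over the paper: you verify Condition~\ref{T:lyapunov sufficient conditions strongly convex:monotone at} by noting that equality in Condition~\ref{T:lyapunov sufficient conditions strongly convex:curious} forces the product $(2\mu L\alpha+\phi^2-1)(2\alpha-\gamma^2)$ to equal a square, hence to be nonnegative, and then divide by the strictly positive factor $2\alpha-\gamma^2$; the paper instead checks $1\le 2\mu L\alpha+\phi^2$ by a separate case analysis on $\gamma\lessgtr\gamma_{opt}$.
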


\begin{proof}
    We start the proof by exploiting some insights from our numerical analysis.
    First, we observe that we can impose $\rho =0$  
    and $\alpha_t = \beta_t$ 
    without changing anything to the empirical best bias term.
    We moreover infer that ${a_t}/{a_{t+1}}\in (0, 1)$ is constant, so let us denote this ratio by $\phi^2$. 
    Since $\phi_{\opt}^2$ is the optimal rate for the deterministic gradient descent in the strongly convex smooth case, we anticipate that $\phi^2\ge \phi_\opt^2$.
    Without loss of generality, we impose $a_T=1$, from which we deduce that $a_t = \phi^{2(T-t)}$.
    We were also able to deduce that $e_t = e a_{t+1}$ and that $\alpha_t = \alpha a_{t+1}$ for some constants $e, \alpha > 0$, yet to be determined.    
    
    It remains to study the conditions of Theorem \ref{T:lyapunov sufficient conditions strongly convex} to find feasible values for $\alpha$ and  $e$.

    Condition \ref{T:lyapunov sufficient conditions strongly convex:variance} yields
    \begin{equation*}
        a_{t+1} \gamma^2 (2\alpha a_{t+1} ) \leq (2\alpha a_{t+1} - a_{t+1}\gamma^2)e a_{t+1},
    \end{equation*}
    which holds true if
    \begin{equation*}
         2\gamma^2 \alpha \leq (2\alpha  - \gamma^2)e.
    \end{equation*}
    As we want to minimize $e$, we set
    \begin{equation*}
        e = \frac{2\gamma^2 \alpha}{2\alpha  - \gamma^2},
    \end{equation*}
    and impose that the denominator $2\alpha  - \gamma^2$ is positive.
    Now it remains to find $\alpha$. The above expression is decreasing with $\alpha$, so we aim to find the largest $\alpha$ possible such that all conditions are satisfied.

    Turning to Condition \ref{T:lyapunov sufficient conditions strongly convex:curious} will determine $\alpha$.
    With our current notations and restrictions on the parameters, it can be written as 
    \begin{equation*}
        (a_{t+1}\gamma - \alpha a_{t+1} (L + \mu) )^2 \leq (2\mu L \alpha a_{t+1}  + a_t - a_{t+1})(2 \alpha a_{t+1}  - a_{t+1}\gamma^2),
    \end{equation*}
    which is true if
    \begin{equation*}
        (\gamma - \alpha (L + \mu) )^2 - (2\mu L \alpha  -(1- \phi^2))(2 \alpha   - \gamma^2) \leq 0.
    \end{equation*}
    This is a polynomial of degree at most $2$ in $\alpha$, which must be nonpositive.
    Expanding the terms to find the coefficients of this polynomial $P(\alpha)$, we obtain
    \begin{eqnarray*}
        P(\alpha) 
        &=&
        \gamma^2 + \alpha^2(L+\mu)^2 - 2 \alpha (L+ \mu)\gamma 
        - 4\mu L \alpha^2 + 2 \mu L \alpha \gamma^2 + 2 \alpha (1 - \phi^2) - \gamma^2(1 - \phi^2) \\
        &=&
        \alpha^2 \left[ (L+\mu)^2 - 4 \mu L \right]
        - \alpha \left[ 2 \gamma(L + \mu) - 2 \mu L \gamma^2 - 2(1- \phi^2) \right]
        + \left[ \gamma^2 - \gamma^2(1 - \phi^2) \right] \\
        &=&
        a \alpha^2 - 2b \alpha + c,
    \end{eqnarray*}
    where
    \begin{equation}\label{D:sgd strong convex curious polynomial coefficients}
        \begin{cases}
        a &= (L+\mu)^2 - 4 \mu L = (L-\mu)^2, \\
        b &= \gamma(L + \mu) -  \mu L \gamma^2 - (1- \phi^2), \\
        c &= \gamma^2 \phi^2 = \gamma^2 ( \phi_{\opt}^2 + \varepsilon).
        \end{cases}
    \end{equation}
    It is a simple exercise to verify that $\phi_{\opt} = 1 - \mu \gamma$ if $\gamma \leq \gamma_{\crit}$ and $\phi_{\opt} = \gamma L - 1$ if $\gamma \geq \gamma_{\crit}$, which leads us to compute the value of $b$ through a case distinction:
    \begin{itemize}
        \item if $\gamma \leq \gamma_{\crit}$ then $1- \phi_{\opt}^2 = \gamma \mu (2 - \gamma \mu)$ and
        \begin{eqnarray*}
            b &=&
            \gamma(L + \mu) -  \mu L \gamma^2 -  \gamma \mu (2 - \gamma \mu) + \varepsilon  \\
            &=&
            \gamma\left( L + \mu - \mu L \gamma -2 \mu + \gamma \mu^2 \right) + \varepsilon \\
            &=&
            \gamma (1 - \gamma \mu) (L  -  \mu) + \varepsilon \\
            &=&
            \gamma \phi_{\opt} (L  -  \mu) + \varepsilon.
        \end{eqnarray*}
        \item if $\gamma \geq \gamma_{\crit}$ then $1- \phi_{\opt}^2 = \gamma L (2 - \gamma L)$ and
        \begin{eqnarray*}
            b &=&
            \gamma(L + \mu) -  \mu L \gamma^2 -  \gamma L (2 - \gamma L) + \varepsilon  \\
            &=&
            \gamma\left( L + \mu - \mu L \gamma -2 L + \gamma L^2 \right) + \varepsilon \\
            &=&
            \gamma (\gamma L -1) (L  -  \mu) + \varepsilon \\
            &=&
            \gamma \phi_{\opt} (L  -  \mu) + \varepsilon.
        \end{eqnarray*}
    \end{itemize}
    In both cases we obtain $b = \gamma \phi_{\opt} (L  -  \mu) + \varepsilon \geq 0$.

    Since $L > \mu$, we have that $a>0$, so the largest feasible $\alpha$ is the largest root of $P$, if existent. The discriminant $\Delta$ of $P$ is given by
    \begin{eqnarray*}
        \Delta &=& (2b)^2 - 4 ac \\
        &=&
        4\left( \gamma \phi_{\opt} (L  -  \mu) + \varepsilon \right)^2
        - 4 (L-\mu)^2 \gamma^2(\phi^2_{\opt} + \varepsilon) \\
        &=&
        4\gamma^2 \phi_{\opt}^2 (L  -  \mu)^2 + 4\varepsilon^2 
        + 8 \varepsilon \gamma \phi_{\opt} (L  -  \mu) 
        - 4 (L-\mu)^2 \gamma^2 \phi^2_{\opt} 
        - 4 (L-\mu)^2 \gamma^2 \varepsilon \\
        &=&
        4 \left[ \varepsilon^2 +  \varepsilon \left( 2  \gamma \phi_{\opt} (L  -  \mu) - (L-\mu)^2 \gamma^2\right) \right] \\
        &=&
        4 \left[ \varepsilon^2 +  \varepsilon \gamma (L  -  \mu) \left( 2   \phi_{\opt}  - (L-\mu) \gamma \right) \right].
    \end{eqnarray*}
    A simple calculation shows that
    \begin{equation*}
        2   \phi_{\opt}  - (L-\mu) \gamma
        =
        \begin{cases}
            2 - \gamma (L + \mu) & \text{ if } \gamma \leq \gamma_{\crit} \\
            -2 + \gamma (L+\mu) & \text{ if } \gamma \geq \gamma_{\crit}
        \end{cases}
        =
        \vert 2 - \gamma (L + \mu) \vert = (L+\mu) \vert \gamma - \gamma_{\crit} \vert.
    \end{equation*}
    If we introduce $\delta \coloneq \vert \gamma - \gamma_{\crit} \vert$ then we have the simpler and nonnegative expression
    \begin{equation*}
        \Delta = 
         4 \left[ \varepsilon^2 +  \varepsilon \gamma (L  -  \mu) (L + \mu) \delta \right]\ge 0.
    \end{equation*}
    As such, $P$ has real roots. We thus define $\alpha$ to be the largest of the two roots
    \begin{eqnarray*}
        \alpha &=& \frac{2b + \sqrt{\Delta}}{2a} \\
        &=&
        \frac{\gamma \phi_{\opt} (L-\mu) + \varepsilon + \sqrt{\varepsilon^2 +  \varepsilon \gamma (L  -  \mu) (L + \mu) \delta }}{(L-\mu)^2} \\
        &=&
        \omega + \omega_\varepsilon, 
    \end{eqnarray*}
    where we noted
    \begin{equation*}
        \omega = \frac{\gamma \phi_{\opt}}{L-\mu} \quad \text{ and } \quad 
        \omega_\varepsilon =
        \frac{\varepsilon + \sqrt{\varepsilon^2 +  \varepsilon \gamma (L  -  \mu) (L + \mu) \delta }}{(L-\mu)^2}.
    \end{equation*}

    To conclude the proof, we are now going to verify that this choice of $\alpha$ is feasible, in the sense that it satisfies all the conditions from Theorem \ref{T:lyapunov sufficient conditions strongly convex}.
    The nonnegativity Condition \ref{T:lyapunov sufficient conditions strongly convex:positivity} is satisfied by construction.
    Condition \ref{T:lyapunov sufficient conditions strongly convex:rho} is satisfied as we set $\rho = 0$ and $\alpha_t = \beta_t$.
    Condition \ref{T:lyapunov sufficient conditions strongly convex:monotone at} is equivalent to
    \begin{eqnarray*}
        && 1 \leq 2 \alpha \mu L + \frac{a_{t}}{a_{t+1}} 
        = 
         2 \alpha\mu L + \phi^2.
    \end{eqnarray*}
    We have $\alpha = \omega + \omega_\varepsilon \geq \omega$ and $\phi^2 \geq \phi^2_{\opt}$, and the above is thus true if
    \begin{eqnarray*}
        &  & 1 \leq 
        \mu L 2 \omega + \phi^2_{\opt}  \\
        &\iff &
        1 \leq 
        \frac{2 \mu L \gamma \phi_{\opt}}{L - \mu} +\phi^2_{\opt} \\
        & \iff &
        0 \leq 2 \mu L \gamma \phi_{\opt} - (1 - \phi^2_{\opt})(L-\mu).
    \end{eqnarray*}
    To assess whether this inequality is true, we are going to consider two cases:
    \begin{itemize}
        \item if $\gamma \leq \gamma_{\crit}$, then $\phi_{\opt} = 1 - \gamma \mu$, so the inequality becomes
        \begin{eqnarray*}
            0 & \leq &
            2 \mu L \gamma (1 - \gamma \mu) - \gamma \mu (2 - \gamma \mu)(L-\mu) \\
            &=&
            \mu \gamma (- \mu \gamma L - \gamma \mu^2 + 2 \mu) \\
            &=&
            \mu^2 \gamma (2 - \gamma(L + \mu)) \\
            &=&
            \mu^2 \gamma (\mu + L) \delta.
        \end{eqnarray*}
        \item  if $\gamma \geq \gamma_{\crit}$, then $\phi_{\opt} = \gamma L - 1$, so the inequality becomes
        \begin{eqnarray*}
            0 & \leq &
            2 \mu L \gamma (\gamma L - 1) - \gamma L (2 - \gamma L)(L-\mu) \\
            &=&
            L \gamma (\mu \gamma L + \gamma L^2 - 2 L) \\
            &=&
            -L^2 \gamma (2 - \gamma(L + \mu)) \\
            &=&
            L^2 \gamma (\mu + L) \delta.
        \end{eqnarray*}
    \end{itemize}
    In both cases the inequality is verified, therefore Condition \ref{T:lyapunov sufficient conditions strongly convex:monotone at} holds true.
    Let us now turn to 
    Condition \ref{T:lyapunov sufficient conditions strongly convex:annoying}, which, in our context, is equivalent to
    \begin{equation*}
        0 \leq 2 \alpha - \gamma^2,
    \end{equation*}
    which holds true as
    \begin{eqnarray*}
        2 \alpha - \gamma^2
        &=&
        2 \omega_\varepsilon + 2 \omega - \gamma^2
        =
        2 \omega_\varepsilon 
        + \frac{\gamma}{L-\mu} 
        \left( 2 \phi_{\opt} - \gamma(L-\mu) \right) \\
        &=&
        2 \omega_\varepsilon 
        + \frac{\gamma}{L-\mu} 
        (L+ \mu) \delta.
    \end{eqnarray*}
    Condition \ref{T:lyapunov sufficient conditions strongly convex:curious} is true by the choice of $\alpha$.
    Finally, Condition \ref{T:lyapunov sufficient conditions strongly convex:variance} holds true provided $2\alpha-\gamma^2>0$,  which is verified through the above computation provided $\varepsilon>0$ or $\delta\neq 0$.
    In this case,
    \begin{eqnarray*}
        e&=&
        \gamma^2 \frac{2 \omega + 2 \omega_\varepsilon}{2 \omega_\varepsilon 
        + \frac{L+ \mu}{L-\mu} 
         \gamma \delta} \\
        &=&
        \gamma^2 \frac{2 \gamma \phi_{\opt} + 2 (L-\mu)\omega_\varepsilon}{2 \omega_\varepsilon (L- \mu) 
        + ({L+ \mu})
         \gamma \delta} \\
        &=&
        \gamma^2 \frac{\gamma ((L+\mu) \delta + \gamma(L-\mu)) + 2 (L-\mu)\omega_\varepsilon}{2 \omega_\varepsilon (L- \mu) 
        + ({L+ \mu})
         \gamma \delta} \\
         &=&
         \gamma^2 \left( 1 + \frac{\gamma^2(L- \mu)}{2 \omega_\varepsilon (L- \mu) 
        + ({L+ \mu})
         \gamma \delta} \right).
    \end{eqnarray*}
    The bound on $e_T^{sum}$ is a direct consequence of the fact that $a_t$ is a geometric sequence.
    The upper bound for $e$ comes from the lower bound $\omega_\varepsilon \geq \frac{2 \varepsilon}{(L- \mu)^2}$.
\end{proof}

We note the above proposition only holds true for $\mu < L$, thus not covering the case $L=\mu$. We cover this case in a separate proposition.

\begin{proposition}[Lyapunov parameters. Strongly convex case, any step-size, $L = \mu$]\label{P:lyapunov strongly convex any step-size L=mu}
    Let $ \gamma L \in (0,2)$, with $L = \mu$.
    Let $\varepsilon > 0$.
    Consider the parameters $\rho, a_t, e_t, \alpha_t, \beta_t$ defined by
    \begin{itemize}
        \item $\rho =0$,
        \item $a_t = \phi^{2(T-t)}$, where $\phi^2 = \phi_{\opt}^2 + \varepsilon$, with 
        \begin{equation*}
            \phi_{\opt} = \max\{ 1- \gamma \mu ; \gamma L -1\}
            \quad \text{ and } \quad 
            \varepsilon \in [0,1-\phi^2_{\opt}),
        \end{equation*}
        \item $\alpha_t = \beta_t$ and $\alpha_t = \alpha a_{t+1}$ where 
    \begin{equation*}
        \alpha = \max \left\{ \frac{\gamma^2}{2} \left(1+ \frac{\phi_{\opt}^2}{\varepsilon} \right) ,  \frac{1 - \phi^2_{\opt} - \varepsilon}{2 L^2} \right\},
    \end{equation*}
        \item $e_t = e a_{t+1}$ where
        \begin{equation*}
            e = \frac{2 \gamma^2 \alpha}{2 \alpha - \gamma^2}.
        \end{equation*}
    \end{itemize}
    Then the sufficient Lyapunov conditions of Theorem \ref{T:lyapunov sufficient conditions strongly convex} are verified.
    Moreover, we have
    \begin{equation*}
        \phi \xrightarrow{\varepsilon \to 0} \phi_{\opt}, \quad
        \alpha \xrightarrow{\varepsilon \to 0} +\infty, \quad
        e \xrightarrow{\varepsilon \to 0} \gamma^2.
    \end{equation*}
\end{proposition}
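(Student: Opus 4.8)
The plan is to mirror the proof of Proposition~\ref{P:lyapunov strongly convex any step-size}, now specialised to $L=\mu$, where the key structural change is that the leading coefficient $(L-\mu)^2$ in Condition~\ref{T:lyapunov sufficient conditions strongly convex:curious} vanishes, so that this condition becomes \emph{linear} (not quadratic) in the free parameter $\alpha$. First I would record the elementary facts that hold when $L=\mu$: then $\gamma\mu=\gamma L$, so $\phi_{opt}=\max\{1-\gamma L,\gamma L-1\}=|1-\gamma L|$ regardless of which branch attains the maximum, giving $\phi_{opt}^2=(1-\gamma L)^2$ and the identity $2\gamma L-(\gamma L)^2=1-\phi_{opt}^2$; moreover $\gamma L\in(0,2)$ forces $\phi_{opt}\in[0,1)$, hence $\phi^2=\phi_{opt}^2+\varepsilon\in[0,1)$ for every admissible $\varepsilon\in(0,1-\phi_{opt}^2)$. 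Then I would install the numerically suggested ansatz $\rho=0$, $\alpha_t=\beta_t=\alpha\,a_{t+1}$, $e_t=e\,a_{t+1}$ and $a_t=\phi^{2(T-t)}$ (so $a_T=1$ and $a_t/a_{t+1}=\phi^2$), with $\alpha$ and $e$ still to be fixed, and verify the six conditions of Theorem~\ref{T:lyapunov sufficient conditions strongly convex} one at a time.

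Conditions~\ref{T:lyapunov sufficient conditions strongly convex:positivity} and~\ref{T:lyapunov sufficient conditions strongly convex:rho} are immediate as soon as $2\alpha>\gamma^2$ (which makes $e=\tfrac{2\gamma^2\alpha}{2\alpha-\gamma^2}$ positive), since $\rho=0$ and $\alpha_t=\beta_t$. Dividing the remaining inequalities by $a_{t+1}>0$ and using $a_t/a_{t+1}=\phi^2$, $\mu L=L^2$ and $\alpha_t+\beta_t=2\alpha a_{t+1}$, Condition~\ref{T:lyapunov sufficient conditions strongly convex:monotone at} becomes $1\le 2L^2\alpha+\phi^2$, i.e. $\alpha\ge\tfrac{1-\phi_{opt}^2-\varepsilon}{2L^2}$; Condition~\ref{T:lyapunov sufficient conditions strongly convex:annoying} becomes $\gamma^2\le 2\alpha$; and, with $e$ defined as above, Condition~\ref{T:lyapunov sufficient conditions strongly convex:variance} holds with equality. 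The crux is Condition~\ref{T:lyapunov sufficient conditions strongly convex:curious}: since $\alpha_t L+\beta_t\mu=2L\alpha a_{t+1}$, dividing by $a_{t+1}^2$ turns it into $(\gamma-2L\alpha)^2\le(2L^2\alpha+\phi^2-1)(2\alpha-\gamma^2)$; expanding both sides, the $4L^2\alpha^2$ and the $\gamma^2$ terms cancel (precisely the simplification brought by $L=\mu$), and after collecting terms and substituting $2\gamma L-(\gamma L)^2=1-\phi_{opt}^2$ the coefficient of $2\alpha$ collapses to $\phi^2-\phi_{opt}^2=\varepsilon$, leaving $2\alpha\varepsilon\ge\gamma^2\phi^2=\gamma^2(\phi_{opt}^2+\varepsilon)$, that is $\alpha\ge\tfrac{\gamma^2}{2}\bigl(1+\tfrac{\phi_{opt}^2}{\varepsilon}\bigr)$. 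This last bound already dominates the bound $\gamma^2/2$ coming from Condition~\ref{T:lyapunov sufficient conditions strongly convex:annoying}, so all six conditions hold whenever $\alpha\ge\max\bigl\{\tfrac{\gamma^2}{2}(1+\tfrac{\phi_{opt}^2}{\varepsilon}),\ \tfrac{1-\phi_{opt}^2-\varepsilon}{2L^2}\bigr\}$ (with $2\alpha>\gamma^2$ to keep $e$ finite), which is exactly the admissible range in the statement.

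For the asymptotics as $\varepsilon\to0$, the relation $\phi^2=\phi_{opt}^2+\varepsilon$ gives $\phi\to\phi_{opt}$ at once. For $\alpha$ and $e$ one must commit to a concrete admissible choice $\alpha=\alpha(\varepsilon)$; the cleanest is $\alpha(\varepsilon)=\max\bigl\{\tfrac{\gamma^2}{2}(1+\tfrac{\phi_{opt}^2}{\varepsilon}),\ \tfrac{1-\phi_{opt}^2-\varepsilon}{2L^2},\ \varepsilon^{-1}\bigr\}$, which is admissible, satisfies $\alpha(\varepsilon)\to+\infty$ unconditionally, and then $e=\tfrac{2\gamma^2\alpha}{2\alpha-\gamma^2}=\tfrac{2\gamma^2}{2-\gamma^2/\alpha}\to\gamma^2$. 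I expect the main obstacle to be twofold: on the one hand, recognising that for $L=\mu$ Condition~\ref{T:lyapunov sufficient conditions strongly convex:curious} degenerates to a linear constraint, so that one obtains a one-parameter family of admissible $\alpha$ rather than the single value obtained when $L>\mu$; and on the other hand, interpreting the limit claims correctly, since they hold for a \emph{suitable} admissible $\alpha$ rather than for every admissible $\alpha$ — this matters at the optimal step-size $\gamma=\gamma_{opt}$ (then $\phi_{opt}=0$), where the two natural lower bounds on $\alpha$ stay bounded and the auxiliary term $\varepsilon^{-1}$ is what forces $\alpha\to+\infty$ and hence $e\to\gamma^2$.
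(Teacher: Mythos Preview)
Your proposal is correct and follows essentially the same route as the paper: both recognise that when $L=\mu$ the quadratic $a\alpha^2-2b\alpha+c$ from Condition~\ref{T:lyapunov sufficient conditions strongly convex:curious} degenerates (since $a=(L-\mu)^2=0$), compute $b=\varepsilon$ and $c=\gamma^2\phi^2$ to obtain the linear bound $\alpha\ge\tfrac{\gamma^2}{2}(1+\phi_{opt}^2/\varepsilon)$, and then check that Conditions~\ref{T:lyapunov sufficient conditions strongly convex:monotone at} and~\ref{T:lyapunov sufficient conditions strongly convex:annoying} yield the remaining constraints on $\alpha$. Your handling of the asymptotics is in fact slightly more careful than the paper's: you note that at $\gamma L=1$ one has $\phi_{opt}=0$, so both lower bounds on $\alpha$ stay bounded and the claim $\alpha\to+\infty$ requires an explicit admissible choice (your added $\varepsilon^{-1}$ term), a subtlety the paper glosses over.
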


\begin{proof}
    The arguments in this proof are mostly recycled from Proposition \ref{P:lyapunov strongly convex any step-size}.
    In particular, we shall use the same values for $\rho$ and $a_t$.
    Regarding the value of $\alpha$, we impose $a \alpha^2 -2b \alpha +c \leq 0$ where $a,b,c$ are defined in Equation \eqref{D:sgd strong convex curious polynomial coefficients}, which we recall for convenience:
    \begin{equation*}
    \begin{cases}
        a &= (L-\mu)^2, \\
        b &= \gamma(L + \mu) -  \mu L \gamma^2 - (1- \phi^2), \\
        c &= \gamma^2 ( \phi_{\opt}^2 + \varepsilon).
    \end{cases}
    \end{equation*}
    Because $L= \mu$, it holds that $a=0$, and that
    \begin{equation*}
        b = 2\gamma L -  L^2 \gamma^2 - (1 - \phi_{\opt}^2) + \varepsilon
        =2\gamma L -  L^2 \gamma^2 - \gamma L (2 - \gamma L) + \varepsilon
        = \varepsilon.
    \end{equation*}
    Therefore, $\alpha$ must satisfy $c \leq 2b \alpha$, that is
    \begin{equation}\label{eq:sgd strong convex alpha lower bound}
        \alpha \geq \frac{c}{2b}
        =
        \frac{\gamma^2 ( \phi_{\opt}^2 + \varepsilon)}{2 \varepsilon}
        =
        \frac{\gamma^2}{2} \left(1+ \frac{\phi_{\opt}^2}{\varepsilon} \right).
    \end{equation}
    It then remains to verify the conditions of Theorem \ref{T:lyapunov sufficient conditions strongly convex}.
    Conditions \ref{T:lyapunov sufficient conditions strongly convex:positivity} and \ref{T:lyapunov sufficient conditions strongly convex:rho} are readily satisfied.
    Condition \ref{T:lyapunov sufficient conditions strongly convex:monotone at} can be rewritten as 
    \begin{equation*}
        \alpha \geq \frac{1 - \phi^2_{\opt} - \varepsilon}{2 L^2},
    \end{equation*}
    thus justifying our definition of $\alpha$.
    Condition \ref{T:lyapunov sufficient conditions strongly convex:annoying} requires that $\alpha \geq \tfrac{\gamma^2}{2}$, which is already true because of Equation \eqref{eq:sgd strong convex alpha lower bound}.
    Condition \ref{T:lyapunov sufficient conditions strongly convex:curious} is satisfied under Equation \eqref{eq:sgd strong convex alpha lower bound}.
    Finally, Condition \ref{T:lyapunov sufficient conditions strongly convex:variance} is satisfied with 
        \begin{equation*}
            e = \frac{2 \gamma^2 \alpha}{2 \alpha - \gamma^2},
        \end{equation*}
        where the denominator is strictly positive because of Equation \eqref{eq:sgd strong convex alpha lower bound}.
\end{proof}

\begin{theorem}[Strongly convex case, general step-size]\label{T:SGD strong convex general step-size epsilon}
Let Assumptions \ref{Ass:convex smooth} and \ref{Ass:bounded solution variance} hold, with $\mu > 0$.
Let $\varepsilon \geq 0$ and $\gamma_{\crit} = \tfrac{2}{\mu + L}$.
Let $x_t$ be generated by \eqref{S1::eq:SGD}, with $\gamma L \in (0, 2)$. 
Assume that either $\varepsilon >0$ or $\gamma \neq \gamma_{\crit}$.
Then, for every $T \geq 1$,
\begin{equation*}
    \Exp{\| x_T-x_*\|^2}\le \phi^{2T}\cdot \|x_0-x_*\|^2 + \frac{1 - \phi^{2T}}{1- \phi^2} e \sigma_*^2,
\end{equation*}
where $\phi^2 = \varepsilon + (\max\{ 1- \gamma \mu ; \gamma L -1\})^2 \in [0,1)$, and $e$ is defined in Proposition \ref{P:lyapunov strongly convex any step-size} and verifies 
    \begin{equation*}
        e \leq \gamma^2 \left( 1 + \frac{\gamma^2 (L - \mu)^2}{4 \varepsilon + (L+ \mu)(L- \mu) \gamma \vert \gamma - \gamma_{\crit} \vert} \right)
        =
        \gamma^2 \frac{\gamma^2_{\crit}\varepsilon + \gamma \gamma_{\crit} \phi_\gamma \phi_{\crit}}{\gamma^2_{\crit}\varepsilon + \gamma \phi_{\crit} \vert \gamma - \gamma_{\crit} \vert}.
    \end{equation*}
    In particular, the bounds of Theorems \ref{S3::thm:strong} and \ref{S3::thm:strong_opt_bias} hold true.
\end{theorem}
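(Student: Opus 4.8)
The plan is to obtain this theorem as a direct corollary of the Lyapunov machinery set up earlier, splitting according to whether $L>\mu$ or $L=\mu$.

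First, suppose $L>\mu$. I would invoke Proposition~\ref{P:lyapunov strongly convex any step-size}, which exhibits explicit parameters $\rho=0$, $a_t=\phi^{2(T-t)}$, $\alpha_t=\beta_t=\alpha a_{t+1}$ and $e_t=e a_{t+1}$ --- with $\phi^2=\phi_{opt}^2+\varepsilon\in[0,1)$, $\alpha=\omega+\omega_\varepsilon$, and the value of $e$ stated there --- that satisfy the six sufficient conditions of Theorem~\ref{T:lyapunov sufficient conditions strongly convex}. Since Assumptions~\ref{Ass:expected cocoercivity star symmetric} and~\ref{Ass:bounded solution variance} hold, that theorem gives $\Exp{E_{t+1}}\le\Exp{E_t}$ for every $t=0,\dots,T-1$. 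As $a_T=\phi^{0}=1>0$, Lemma~\ref{L:SGD bound from lyapunov decrease strongly convex} then yields
\[
    \Exp{\|x_T-x_*\|^2}\ \le\ a_0\,\|x_0-x_*\|^2+e_T^{sum}\,\sigma_*^2 .
\]
It only remains to substitute the two quantities: $a_0=\phi^{2T}$ by definition of $a_t$, and
\[
    e_T^{sum}=\sum_{t=0}^{T-1}e_t=e\sum_{t=0}^{T-1}\phi^{2(T-t-1)}=e\sum_{j=0}^{T-1}\phi^{2j}=e\,\frac{1-\phi^{2T}}{1-\phi^2},
\]
which is exactly the claimed inequality. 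The displayed upper bound on $e$ is already recorded in Proposition~\ref{P:lyapunov strongly convex any step-size}, where it is derived from the lower bound $\omega_\varepsilon\ge \tfrac{2\varepsilon}{(L-\mu)^2}$.

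For the degenerate case $L=\mu$ (for which the hypothesis forces $\varepsilon>0$), the argument is identical, the admissible parameters now being supplied by Proposition~\ref{P:lyapunov strongly convex any step-size L=mu}: one picks any $\alpha$ meeting the lower bound stated there, sets $e=\tfrac{2\gamma^2\alpha}{2\alpha-\gamma^2}$ and $a_t=\phi^{2(T-t)}$, and runs the same chain --- Theorem~\ref{T:lyapunov sufficient conditions strongly convex} followed by Lemma~\ref{L:SGD bound from lyapunov decrease strongly convex} --- to obtain the bound, with the term $\gamma^2(L-\mu)^2$ in the displayed estimate for $e$ simply vanishing.

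I do not expect a genuine obstacle at the level of this theorem: the real work (postulating the parameters and verifying Conditions~\ref{T:lyapunov sufficient conditions strongly convex:positivity}--\ref{T:lyapunov sufficient conditions strongly convex:variance}, in particular the discriminant analysis of the quadratic-in-$\alpha$ Condition~\ref{T:lyapunov sufficient conditions strongly convex:curious} with its case split $\gamma\le\gamma_{opt}$ versus $\gamma\ge\gamma_{opt}$) is done inside the two propositions. The only point requiring care is the normalization: Lemma~\ref{L:SGD bound from lyapunov decrease strongly convex} is stated with $a_T=1$, so one must use the \emph{descending} geometric sequence $a_t=\phi^{2(T-t)}$ rather than an ascending one, and then read off $a_0=\phi^{2T}$ as the bias coefficient and $\sum_t e_t$ as the geometric sum above.
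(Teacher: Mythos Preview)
Your argument for $L>\mu$ matches the paper's proof exactly: combine Proposition~\ref{P:lyapunov strongly convex any step-size}, Theorem~\ref{T:lyapunov sufficient conditions strongly convex}, and Lemma~\ref{L:SGD bound from lyapunov decrease strongly convex}, then read off $a_0=\phi^{2T}$ and the geometric sum for $e_T^{sum}$.

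For $L=\mu$, however, your claim that the hypothesis forces $\varepsilon>0$ is incorrect: when $L=\mu$ one has $\gamma_{opt}=1/L$, and the theorem allows $\varepsilon=0$ provided $\gamma L\neq 1$, so that subcase must still be covered. Moreover, even for $\varepsilon>0$, Proposition~\ref{P:lyapunov strongly convex any step-size L=mu} yields $e=\tfrac{2\gamma^2\alpha}{2\alpha-\gamma^2}>\gamma^2$ for every admissible finite $\alpha$, so a direct application does not meet the displayed estimate $e\le\gamma^2$ (which is what the theorem asserts once $L-\mu=0$). The paper handles both points together: it applies Proposition~\ref{P:lyapunov strongly convex any step-size L=mu} for each $\varepsilon'>0$, lets $\varepsilon'\to 0$ (using $e_{\varepsilon'}\to\gamma^2$ and $\phi\to\phi_{opt}$) to obtain the $\varepsilon=0$ bound with $e=\gamma^2$, and then recovers any $\varepsilon>0$ by monotonicity of $\phi^{2T}$ and of $\tfrac{1-\phi^{2T}}{1-\phi^2}=\sum_{j=0}^{T-1}\phi^{2j}$ in $\phi$. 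Your proposal is easily repaired along these lines.
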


\begin{proof}
We separate the proof into two cases, namely the case $L>\mu$ and the case $L=\mu$.
\begin{itemize}
    \item Consider $L>\mu$. Proposition \ref{P:lyapunov strongly convex any step-size}, combined with Theorem \ref{T:lyapunov sufficient conditions strongly convex}, provides parameters ensuring that the Lyapunov energy decreases with, in particular, $a_T = 1$, $a_0 = \phi^{2T}$.
    We deduce the wanted bound from Lemma \ref{L:SGD bound from lyapunov decrease strongly convex}.
    \item Consider $L=\mu$. Let us consider first $\varepsilon > 0$. Combining Lemma \ref{L:SGD bound from lyapunov decrease strongly convex}, Theorem \ref{T:lyapunov sufficient conditions strongly convex} and Proposition \ref{P:lyapunov strongly convex any step-size L=mu} we obtain
    \begin{equation*}
        \Exp{\| x_T-x_*\|^2}\le \phi^{2T}\cdot \|x_0-x_*\|^2 + \frac{1 - \phi^{2T}}{1- \phi^2} e_\varepsilon \sigma_*^2,
    \end{equation*}
    where $e_\varepsilon$ converges to $\gamma^2$ and $\phi\to \phi_\opt$ when $\varepsilon \to 0$, thus yielding the wanted bound when $\varepsilon=0$.
    If we wish to prove the claim for $\varepsilon>0$, it is enough to realize that $\phi_{\opt} \leq \phi$, together with the fact that $\tfrac{1 - \phi^{2T}}{1- \phi^2}$ is nondecreasing with $\phi$.
\end{itemize}
The bounds of Theorems \ref{S3::thm:strong} and \ref{S3::thm:strong_opt_bias} now follow by either setting $\varepsilon=0$ or using that $\gamma=\gamma_{\crit}$. 
\end{proof}

We end by showing the bias-optimality claim of Theorem \ref{S3::thm:strong}.

\begin{proposition}[Bias-optimality. Strongly convex case]
    The bias obtained in the bound of Theorem \ref{S3::thm:strong} is optimal, in the sense that there exists a convex and smooth problem for which $m=1$ and the inequality is satisfied with equality.
\end{proposition}
\begin{proof}
    It suffices to consider a one-dimensional quadratic function given by $f(x) = \frac{\mu}{2}\|x\|^2$ if $\gamma \le \frac{2}{L+\mu}$, and by $f(x)=\frac{L}{2}\|x\|^2$ otherwise, and any $x_0 \in \R^d$. The iterates $(x_t)$ of gradient descent are given by $x_t = (1-\gamma \mu)^t x_0$ or $x_t = (1-\gamma L)^t x_0$.
    Noting that $x_* = 0$ in either case, we obtain that
    \[
        \|x_T-x_*\|^2 = \begin{cases}
        (1-\gamma \mu)^{2T} \cdot \|x_0 - x_*\|^2 &\quad \text{if $\gamma \le \frac{2}{L+\mu}$} \\
        (1-\gamma L)^{2T} \cdot \|x_0 - x_*\|^2 &\quad \text{if $\gamma \ge \frac{2}{L+\mu}$}
    \end{cases}=\max(1-\gamma \mu, \gamma L - 1)^{2T}\cdot \|x_0 - x_*\|^2,
    \]
    matching the bias term obtained in Theorem \ref{S3::thm:strong}.
\end{proof}
% !TEX root = main.tex

\section{Performance Estimation Problem Methodology} \label{SD:sec}

In this section, we describe the \textit{Performance Estimation Problem} (PEP) methodology in more detail, and explain how we made use of it in this scenario. The methodology was initially introduced by \cite{drori_performance_2014}, further improved by \cite{taylor_smooth_2017}, and adapted by \cite{taylor_stochastic_2019} in the context of stochastic algorithms, that we will follow closely.

Throughout this section, we define $\mathcal F_{\mu, L}(\R^d)$ to be the set of $L$-smooth $\mu$-strongly convex functions. We shall often drop the dependency in $\R^d$ when the space is clear from context.

\subsection{Problem Reformulation}\label{SA::sec:reformulation}

As we aim to obtain a computationally tractable problem, we do restrict ourselves to the finite-sum setting in this section. We do however note that the number of functions $m$ does not influence the final result, and can hence theoretically be taken to be infinite, thus recovering the results of Sections \ref{SA::sec:conv} and \ref{SA2::sec}. In the same spirit, we here assume $\mathcal H$ to be a finite-dimensional euclidean space, although the proofs of our results directly extend to any real Hilbert space.

As explained by Lemmas \ref{L:SGD bound from lyapunov decrease} and \ref{L:SGD bound from lyapunov decrease strongly convex}, our goal is to derive nonnegative parameters $\rho, (a_t, e_t)$ such that $\Exp{E_{t+1}}\le \Exp{E_t}$ for all $t=0, \ldots, T-1$, where 
\[
    E_t=a_t\|x_t-x_*\|^2+\rate\cdot \sum_{s=0}^{t-1}[f(x_s)-\min f] - \sum_{s=0}^{t-1}e_s\cdot \Exp{\|\nabla f_i(x_*)\|^2}\quad \text{for $t=0, \ldots, T-1$},
\]
and $x_*\in \argmin f$. We replaced $\sigma_*^2$ by its equivalent value $\Exp{\|\nabla f_i(x_*)\|^2}$ for simplicity, to avoid the additional variable $\sigma_*^2$.

We call parameters $\rate, (a_t), (e_t)$ \textit{Lyapunov parameters} if $\Exp{E_{t+1}-E_t}\le 0$ holds for all $t=0, \ldots, T-1$, for all $d \geq 1$, for all functions $f_1, \dots, f_m \in \mathcal F_{\mu, L}(\mathbb{R}^d)$, all $x_0\in \R^d$, and all $x_*\in \argmin f$. 
Note that being Lyapunov parameters depends on the number of functions $m$ and the time horizon $T$. We denote the set of all Lyapunov parameters by $\mathcal V_T$, where we omit the dependence on $m$, as our results happen to be independent of $m$.

To unify the presentation for the convex and the strongly convex setting, we introduce some notation.
\begin{itemize}
    \item In the convex setting, Lemma \ref{L:SGD bound from lyapunov decrease} tells us that we are interested in maximizing $\rho$, or equivalently minimizing $\tfrac{1}{\rho}$, where we fix $a_0=1$. In this setting we denote the bias and normalization by 
    \[
    \text{Bias}=\frac1\rho\quad \text{and}\quad 
    \mathcal N_T=\{\rho, (a_t, e_t)\in \mathcal V_T\colon a_0=1\}.
    \]
    \item In the strongly convex setting, Lemma \ref{L:SGD bound from lyapunov decrease strongly convex} tells us we are interested in minimizing $a_0$, where we fix $a_T=1$. In this setting we denote the bias and normalization by 
    \[
    \text{Bias}=a_0\quad \text{and}\quad \mathcal N_T=\{\rho, (a_t, e_t)\in \mathcal V_T\colon a_T=1\}.
    \]
\end{itemize}
With this notation in mind, we may formally state our problem as
\[
    \text{Bias}_\text{opt}=\text{Bias}_\text{opt}(T)\coloneq \inf_{\rho, (a_t, e_t)\in \mathcal N_T}\left\{\text{Bias}\right\}.
\]

The definition of being a Lyapunov parameter is such that it generates a decrease in energy at each step. Specifically, it holds true that 
\begin{align*}
    \text{Bias}_{\text{opt}} &= \inf_{\mathcal N_T} \left\{\text{Bias} \colon  \Exp{E_{t+1}-E_t}\le 0~\forall t= 0, \ldots, T-1, \forall x_0\in \R^d, \forall f_i\in \mathcal F_{\mu, L}, \forall d\in \mathbb Z_{\ge 1}\right\}. \label{S3::eq:rateopt}
\end{align*}
By permuting the quantifiers, this may then be reformulated as a bilevel program, namely:
\begin{align*}
    \text{Bias}_{\text{opt}} &= \inf_{\mathcal N_T} \left\{\text{Bias} \colon  B_t\le 0\quad\forall t= 0, \ldots, T-1\right\},
\end{align*}
where
\begin{align*}
    B_t=B_t(\rate, (a_t,e_t)) \coloneq \quad\quad \sup_{d\in \mathbb Z_{\ge 1}} \quad  &\Exp{E_{t+1}-E_t}\\
    \text{subject to } \quad
    & \text{$x_t$ generated through SGD in $t$ steps from $x_0$}, \\
    &\nabla f(x_*) = 0, \quad x_0, x_*\in \R^d, \quad f_i \in \mathcal F_{\mu, L}.
\end{align*}
Note that the initial point $x_0$ is a decision variable of the inner (follower) problem. As a result, the constraint that $x_t$ must be generated by running SGD for $t$ steps from $x_0$ is not restrictive. Indeed, for any target point $x \in \mathbb{R}^d$, one can construct an initial point $x_0 \in \mathbb{R}^d$ and functions $f_1,\dots,f_m \in \mathcal{F}_{\mu,L}$ such that the SGD iterates satisfy $x_t = x$. Consequently, allowing $x_0$ to vary freely effectively makes the SGD-generation constraint slack. Using this, and recalling the definition of $E_t$, we may write that 
\begin{align*}
    B_t= \quad \quad \sup_{d\in \mathbb Z_{\ge 1}} \quad  &a_{t+1}\Exp{\|x_{t+1}-x_*\|^2}-a_t\|x_t-x_*\|^2+\rate (f(x_t)-f(x_*))-e_t\Exp{\|\nabla f_i(x_*)\|^2}\\
    \text{subject to } \quad &\sum_{i = 1}^{m} \nabla f_i(x_*)=0,  \quad x_t, x_*\in \R^d, \quad f_i \in \mathcal F_{\mu, L}.
\end{align*}
By introducing the notation $f_{j}^{(i)}=f_i(x_j)$ and $g_{j}^{(i)}=\nabla f_i(x_j)$ for $j\in \{t, *\}$ and $i=1, \ldots, m$, and by rewriting the expectations as finite sums, we may rewrite the problem equivalently as
\begin{align*}
    B_t= \quad \quad \sup_{d\in \mathbb Z_{\ge 1}} \quad  &\frac{a_{t+1}}{m}\sum_{i=1}^m{\|x_{t}-\gamma g_t^{(i)}-x_*\|^2}-a_t\|x_t-x_*\|^2+\frac\rate m\sum_{i=1}^m (f_t^{(i)}-f_*^{(i)})-\frac{e_t}{m}\sum_{i=1}^m\|g_*^{(i)}\|^2\\
    \text{subject to } \quad &\sum_{i = 1}^{m} g_*^{(i)}=0, \\
    & \text{there exist $f_i\in \mathcal F_{\mu, L}$ such that $f_i(x_j)=f_{j}^{(i)}$ and $\nabla f_i(x_j)=g_{j}^{(i)}$} \\
    &\qquad\qquad\qquad\qquad\qquad\qquad \text{for $j\in \{t, *\}$ and $i=1, \ldots, m$}, \\
    & x_t, x_*\in \R^d, \quad g_t^{(i)}, g_*^{(i)}\in \R^d,\quad f_t^{(i)}, f_*^{(i)}\in \R\quad \text{for all } i=1, \ldots, m.
\end{align*}
The existence condition may seem like a complicated reformulation, but it allows us to use \cite[Theorem 4]{taylor_smooth_2017}, a cornerstone of the performance estimation framework:
\begin{theorem}\label{convex_smooth_interpolation2}
    Let $L \geq \mu\ge 0$ and let $\left\{(x_i, g_i, f_i)_{i \in \mathcal I}\right\} \subset \R^d \times \R^d \times \R$ be a finite set of triplets. There exists a function $f\in \mathcal F_{\mu, L}$ such that
    \begin{align}
        f(x_i) = f_i \quad \text{and}\quad \nabla f(x_i) = g_i \quad \text{for all $i\in \mathcal I$}, \nonumber
    \end{align}
    if, and only if, for every pair of indices $(i, j) \in \mathcal I^2$,
     it holds that
    \begin{equation}\label{SA::eq:interpolation}
        f_i - f_j - \langle g_j, x_i - x_j \rangle \geq \frac{1}{2(1 - \frac{\mu}{L})} \left(\frac{1}{L}\|g_i - g_j\|^2 + \mu \|x_i - x_j\|^2 - 2\frac{\mu}{L}\langle g_i - g_j, x_i - x_j \rangle \right).
    \end{equation}
\end{theorem}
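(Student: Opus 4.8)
The plan is to prove both implications, handling the easy ``only if'' direction first and then the substantive ``if'' direction by an explicit construction via Legendre--Fenchel duality. Throughout we may assume $L > \mu$, since the degenerate case $L = \mu$ forces every candidate function to be a shifted quadratic $\tfrac{\mu}{2}\|\cdot\|^2 + \langle b, \cdot \rangle + c$ and is disposed of by a direct elementary check.

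For necessity, suppose $f \in \mathcal{F}_{\mu,L}(\R^d)$ interpolates the data. Set $h = f - \tfrac{\mu}{2}\|\cdot\|^2$, which is convex and $(L-\mu)$-smooth, with $\nabla h(x_i) = g_i - \mu x_i$ and $h(x_i) = f_i - \tfrac{\mu}{2}\|x_i\|^2$. Applying the classical cocoercivity inequality $h(x) - h(y) - \langle \nabla h(y), x - y \rangle \ge \tfrac{1}{2(L-\mu)}\|\nabla h(x) - \nabla h(y)\|^2$ at $(x,y) = (x_i, x_j)$ and expanding both sides, the cross terms involving $\|x_i - x_j\|^2$ recombine and one recovers exactly \eqref{SA::eq:interpolation}; this is a routine algebraic verification.

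For sufficiency, I would argue in three reductions. (i) Replacing the target $f$ by $h = f - \tfrac{\mu}{2}\|\cdot\|^2$ amounts to replacing the data by $(x_i,\, g_i - \mu x_i,\, f_i - \tfrac{\mu}{2}\|x_i\|^2)$, and a short computation shows that \eqref{SA::eq:interpolation} transforms into the interpolation inequalities for the $(L-\mu)$-smooth convex class; so it suffices to build an $M$-smooth convex interpolant with $M = L - \mu$. (ii) A closed proper convex function is $M$-smooth if and only if its conjugate is $\tfrac1M$-strongly convex, and conjugation sends a triplet $(x_i, g_i, f_i)$ to $(g_i,\, x_i,\, \langle g_i, x_i\rangle - f_i)$; one checks that the smooth-convex interpolation inequalities become precisely the $\tfrac1M$-strong-convexity interpolation inequalities for this dual data. (iii) A $\tfrac1M$-strongly convex interpolant of the dual data $(y_i, s_i, \phi_i)$ is written down explicitly as
\[
  \phi(y) = \tfrac{1}{2M}\|y\|^2 + \max_i\Big\{ \big(\phi_i - \tfrac{1}{2M}\|y_i\|^2\big) + \big\langle s_i - \tfrac1M y_i,\; y - y_i \big\rangle \Big\},
\]
which is a supremum of affine functions plus a strongly convex quadratic, hence closed, proper, $\tfrac1M$-strongly convex and of full domain; the strong-convexity inequalities on the dual data are exactly what guarantees that the $i$-th piece attains the maximum at $y = y_i$, whence $\phi(y_i) = \phi_i$ and $s_i \in \partial\phi(y_i)$. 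Finally, set $f = \phi^*$ (and add back $\tfrac{\mu}{2}\|\cdot\|^2$): since $\phi$ is $\tfrac1M$-strongly convex, $\phi^*$ is finite, differentiable, and $M$-Lipschitz-gradient, and the subgradient inversion $s_i \in \partial\phi(y_i) \iff y_i \in \partial\phi^*(s_i)$ together with Fenchel's equality translates $\phi(y_i) = \phi_i$ and $s_i \in \partial\phi(y_i)$ back into $\nabla f(x_i) = g_i$ and $f(x_i) = f_i$.

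The main obstacle is the bookkeeping across these changes of variables: one must verify carefully that \eqref{SA::eq:interpolation} is preserved --- up to the parameter changes $(\mu, L) \mapsto (0, L-\mu) \mapsto (\tfrac1{L-\mu}, \infty)$ --- under subtracting a quadratic and under conjugation, and that the explicitly constructed $\phi$ is genuinely closed, proper and of full domain, so that biconjugation and the subgradient-inversion identity apply and yield an honest $M$-smooth function $\phi^*$. The claim in step (iii) that the $i$-th affine piece is active at $y_i$ is precisely the point at which the hypothesis \eqref{SA::eq:interpolation} in its exact stated form is used, and checking it reduces to rearranging the strong-convexity interpolation inequality into the form $\phi_i - \phi_j - \langle s_j, y_i - y_j \rangle \ge \tfrac{1}{2M}\|y_i - y_j\|^2$.
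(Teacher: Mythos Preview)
The paper does not provide its own proof of this theorem; it is quoted verbatim as \cite[Theorem~4]{taylor_smooth_2017} and used as a black box in the PEP reformulation. So there is nothing in the present paper to compare against.

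That said, your proposal is correct and is in fact essentially the argument given in the original source \cite{taylor_smooth_2017}: reduce to the $(0,L-\mu)$ class by subtracting $\tfrac{\mu}{2}\|\cdot\|^2$, pass to the conjugate to trade $M$-smooth convex for $\tfrac{1}{M}$-strongly convex, and interpolate the dual data by a max of affine functions plus a quadratic. The bookkeeping you flag (that \eqref{SA::eq:interpolation} survives each change of variables, and that the $i$-th affine piece is active at $y_i$) is exactly where the work lies, and your outline identifies the right verifications. One small point worth making explicit in a write-up: after dualizing, the inequality for the pair $(i,j)$ in the smooth-convex class becomes the strong-convexity inequality for the pair $(j,i)$, so one uses that the hypothesis is assumed for \emph{all} ordered pairs.
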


Using this theorem, we may rewrite our program equivalently as
\begin{align*}
    B_t= \quad \quad \sup_{d\in \mathbb Z_{\ge 1}} \quad  &\frac{a_{t+1}}{m}\sum_{i=1}^m{\|x_{t}-\gamma g_t^{(i)}-x_*\|^2}-a_t\|x_t-x_*\|^2+\frac\rate m\sum_{i=1}^m (f_t^{(i)}-f_*^{(i)})-\frac{e_t}{m}\sum_{i=1}^m\|g_*^{(i)}\|^2\\
    \text{subject to } \quad &\left\|\sum_{i = 1}^{m} g_*^{(i)}\right\|^2=0,  \\
    & \text{$(x_t, g_t^{(i)}, f_t^{(i)})$ and $(x_*, g_*^{(i)}, f_*^{(i)})$ satisfy Inequality \eqref{SA::eq:interpolation} for all $i=1, \ldots, m$}, \\
    & \text{$(x_*, g_*^{(i)}, f_*^{(i)})$ and $(x_t, g_t^{(i)}, f_t^{(i)})$ satisfy Inequality \eqref{SA::eq:interpolation} for all $i=1, \ldots, m$}, \\
    & x_t, x_*\in \R^d, \quad g_t^{(i)}, g_*^{(i)}\in \R^d,\quad f_t^{(i)}, f_*^{(i)}\in \R\quad \text{for all } i=1, \ldots, m.
\end{align*}
The variable $g_*^{(m)}$ is redundant, as it is given by $g_*^{(m)}=-\sum_{i=1}^{m-1}g_*^{(i)}$. 
To simplify notation, we introduce
\begin{subequations}\label{SA::eq:GtPt}
    \begin{align}
    P_t &= \left(g_{t}^{(1)}, \ldots, g_{t}^{(m)}, g_{*}^{(1)}, \ldots, g_{*}^{(m-1)}, x_t - x_* \right) \in \R^{d \times 2m}, \label{SA::eq:Gt}\\
    F_t &= \left(f_{t}^{(1)}, \ldots, f_{t}^{(m)}, f_{*}^{(1)}, \ldots, f_{*}^{(m)}\right) \in \R^{2m}, 
    \end{align}
\end{subequations}
and define the vectors $\bm{p}_i \in \R^{2m}$ for $i=1, \ldots, 2m+1$, such that
\[
\begin{cases}
    P_t \bm{p}_i = g_t^{(i)} & \text{for $i=1, \ldots, m$}, \\
    P_t \bm{p}_{m+i} = g_*^{(i)} & \text{for $i=1, \ldots, m-1$}, \\
    P_t\bm p_{2m} = -\sum_{i=1}^{m-1}g_*^{(i)}, & \\
    P_t \bm{p}_{2m+1} = x_t-x_*,
\end{cases}
\]
and $\bm{f}_i \in \R^{2m}$ for $i=1, \ldots, 2m$ such that, for $i=1, \ldots, m$,
\[
F_t \bm{f}_{i} = f_t^{(i)}, \quad F_t \bm{f}_{m +i} = f_*^{(i)}.
\]
With this, we may write our program as
\begin{align*}
    B_t=\quad\quad \quad\sup_{d\in \mathbb Z_{\ge1}} &P_t^T\Delta_t P_t+F_t \tilde \Delta\\
    \text{subject to } \quad
    & F_t \bm{f}_{m+i}-F_t \bm{f}_{i}+P_t^TA_{t, *}^{(i)}P_t\le 0 \text{ for all $i=1, \ldots, m$}, \\
    & F_t \bm{f}_{i}-F_t \bm{f}_{m+i}+P_t^TA_{ *, t}^{(i)}P_t\le 0 \text{ for all $i=1, \ldots, m$}, \\
    & P_t\in \R^{d\times 2m}, \quad F_t\in \R^{2m},
\end{align*}
where 
\[
    \begin{dcases}
        \Delta_t &= \frac{a_{t+1}}{m}\sum_{i=1}^m(\bm p_{2m+1}-\gamma \bm p_{m+i})(\bm p_{2m+1}-\gamma \bm p_{m+i})^T-a_t\bm p_{2m+1}\bm p_{2m+1}^T\\
        &\qquad\qquad\qquad -\frac{e_t}{m}\sum_{i=1}^m\bm p_{m+i}\bm p_{m+i}^T, \\
        \tilde \Delta &= \frac{\rate}{m}\sum_{i=1}^m \bm f_i-\bm f_{m+i}, \\
        A_{t, *}^{(i)} &= \bm p_{m+i}\bm p_{2m+1}^T+\frac{1}{2(L-\mu)}(\bm p_i-\bm p_{m+i})(\bm p_i-\bm p_{m+i})^T+\frac{\mu L}{2(L-\mu)}\bm p_{2m+1}\bm p_{2m+1}^T \\
        &\qquad\qquad\qquad -\frac{\mu}{L-\mu}(\bm p_i-\bm p_{m+i})\bm p_{2m+1}^T, \\
        A_{*, t}^{(i)} 
        &= -\bm p_{i}\bm p_{2m+1}^T+\frac{1}{2(L-\mu)}(\bm p_i-\bm p_{m+i})(\bm p_i-\bm p_{m+i})^T+\frac{\mu L}{2(L-\mu)}\bm p_{2m+1}\bm p_{2m+1}^T \\
        &\qquad\qquad\qquad -\frac{\mu}{L-\mu}(\bm p_i-\bm p_{m+i})\bm p_{2m+1}^T. \\
    \end{dcases}
\]
This formulation is a non-convex quadratic program. A standard technique to render this solvable is to cast it into a semi-definite program. We do so by introducing $G_t=P_t^TP_t\succeq 0$, and obtain
\begin{align*}
    B_t=\sup_{d\in \mathbb Z_{\ge1}} \quad\quad \quad \sup_{G_t\in \mathcal S^{2m}, F_t \in \mathbb{R}^{2m}} \quad  &\Tr(\Delta_t G_t)+F_t \tilde \Delta\\
    \text{subject to } \quad 
    & F_t \bm{f}_{m+i}-F_t \bm{f}_{i}+\Tr(A_{t, *}^{(i)}G_t)\le 0 \text{ for all $i=1, \ldots, m$} \\
    & F_t \bm{f}_{i}-F_t \bm{f}_{m+i}+\Tr(A_{ *, t}^{(i)}G_t)\le 0 \text{ for all $i=1, \ldots, m$} \\
    & \text{rank}(G_t)\le d,
\end{align*}
where $\mathcal S^{k}$ denotes the set of symmetric positive definite matrices of dimension $k\times k$. 
This formulation is equivalent to the previous one due to the rank condition: given any matrix $G_t \in \mathcal S^{k}$ of rank lesser than $d$, a Cholesky factorization  $G_t=P_t^TP_t$ would yield a $P_t \in \mathbb{R}^{d \times 2m}$, which is feasible to the previous formulation.
Now, observe that the non-convex non-continuous rank constraint may be removed due to the exterior supremum over $d\ge 1$.
Once this is done, we may as well remove the supremum over $d$ because it does not appear anymore in the inner supremum.
This allows us to write
\begin{align}\label{SOPT::prob:primal}\tag{Primal}
    B_t=\quad\quad \quad \sup_{G_t\in \mathcal S^{2m}, F_t \in \mathbb{R}^{2m}} \quad  &\Tr(\Delta_t G_t)+F_t \tilde \Delta\\
    \text{subject to } \quad 
    & F_t \bm{f}_{m+i}-F_t \bm{f}_{i}+\Tr(A_{t, *}^{(i)}G_t)\le 0 \text{ for all $i=1, \ldots, m$}, \notag \\
    & F_t \bm{f}_{i}-F_t \bm{f}_{m+i}+\Tr(A_{ *, t}^{(i)}G_t)\le 0 \text{ for all $i=1, \ldots, m$}. \notag 
\end{align}

We have now expressed $B_t$ as the optimal value of a semi-definite program.
It is a simple exercise to compute the dual of this problem, which happens to be a feasibility problem:
\begin{align}\label{SA::eq:follower_dual}\tag{Dual}
    \tilde B_t =\quad\quad\quad \sup_{\Lambda_t, \lambda_{t, *}^{(i)}, \lambda_{*, t}} \quad &0 \\
    \text{subject to}\quad & -\Delta_t 
    +\sum_{i=1}^m\left(\lambda_{t, *}^{(i)}A_{t, *}^{(i)}+\lambda_{*, t}^{(i)}A_{*, t}^{(i)}\right)= \Lambda_t, \nonumber \\
    & -\tilde \Delta + \sum_{i=1}^m\lambda_{t, *}^{(i)}(\bm f_{m+i}-\bm f_i)+ \sum_{i=1}^m\lambda_{*, t}^{(i)}(\bm f_{i}-\bm f_{m+i})=0, \nonumber \\
    & \Lambda_t \in \mathcal S^{2m}, \quad \lambda_{t, *}^{(i)}, \lambda_{*, t}^{(i)}\in \R_{\ge 0}\quad \text{for all $i=1, \ldots, m$}.\nonumber 
\end{align}
We prove in Section \ref{SD::subsec:strong_duality} that this is indeed the dual problem of Problem \eqref{SOPT::prob:primal}.
We further prove that strong duality holds, meaning that
$\tilde B_t=-B_t$.
As a consequence, 
\[
    B_t\le 0\iff \tilde B_t\ge 0\iff \text{Problem \eqref{SA::eq:follower_dual} is feasible},
\]
where the last equivalence follows from the structure of Problem \eqref{SA::eq:follower_dual}. 
In conclusion, we can rewrite the problem of minimizing $\text{Bias}$ over the Lyapunov parameters into
\begin{align*}
    \text{Bias}_{\text{opt}} =\quad\quad \inf_{\mathcal N_T} \quad &\text{Bias}  \\
    \text{subject to}\quad & -\Delta_t 
    +\sum_{i=1}^m\left(\lambda_{t, *}^{(i)}A_{t, *}^{(i)}+\lambda_{*, t}^{(i)}A_{*, t}^{(i)}\right)= \Lambda_t, \\
    & -\tilde \Delta + \sum_{i=1}^m\lambda_{t, *}^{(i)}(\bm f_{m+i}-\bm f_i)+ \sum_{i=1}^m\lambda_{*, t}^{(i)}(\bm f_{i}-\bm f_{m+i})=0, \\
    & \Lambda_t \in \mathcal S^{2m}, \quad \lambda_{t, *}^{(i)}, \lambda_{*, t}^{(i)}\in \R_{\ge 0}\quad \text{for all $i=1, \ldots, m$ and $t=0, \ldots, T-1$}.
\end{align*}
This formulation is a finite-dimensional semi-definite program that can be solved numerically.

\subsection{Obtaining Mathematical Proofs} \label{sec:maths proof from pep}

The Lagrangian attached to Problem \eqref{SOPT::prob:primal} is given by 
\begin{align*}
    L(G_t, F_t, \Lambda_t, \lambda_{t, *}^{(i)}, \lambda_{*, t}^{(i)}) 
    =& -\Tr(\Delta_t G_t)-F_t \tilde \Delta - \Tr(\Lambda_t G_t) \\
    & +\sum_{i=1}^m \lambda_{t, *}^{(i)}\left(F_t \bm{f}_{m+i}-F_t \bm{f}_{i}+\Tr(A_{t, *}^{(i)}G_t)\right) \\
    & +\sum_{i=1}^m \lambda_{*, t}^{(i)}\left(F_t \bm{f}_{i}-F_t \bm{f}_{m+i}+\Tr(A_{ *, t}^{(i)}G_t)\right).
\end{align*}
If we have a primal-dual optimal solution $(\bar G_t, \bar F_t, \bar \Lambda_t, (\bar \lambda_{t, *}^{(i)}), (\bar \lambda_{*, t}^{(i)}))$, it holds that, for all $G_t\in \mathcal S^{2m}$ and $F_t\in \R^{2m}$, 
\[
    L(\bar G_t, \bar F_t, \bar \Lambda_t, (\bar \lambda_{t, *}^{(i)}), (\bar \lambda_{*, t}^{(i)})) \le L(G_t, F_t, \bar \Lambda_t, (\bar \lambda_{t, *}^{(i)}), (\bar \lambda_{*, t}^{(i)})).
\]
Due to complementary slackness, it holds that, 
\[
    L(\bar G_t, \bar F_t, \bar \Lambda_t, (\bar \lambda_{t, *}^{(i)}), (\bar \lambda_{*, t}^{(i)}))=-\Tr(\Delta_t \bar G_t)-\bar F_t \tilde \Delta = B_t.
\]
By strong duality (see Section \ref{SD::subsec:strong_duality}), we know that the existence of a primal-dual solution implies that $B_t=0$. As such, it holds that, for all $G_t\in \mathcal S^{2m}$ and $F_t\in \R^{2m}$,
\[
    0 \le L(G_t, F_t, \bar \Lambda_t, (\bar \lambda_{t, *}^{(i)}), (\bar \lambda_{*, t}^{(i)})),
\]
or in other terms that
\begin{align}\label{SOPT::eq:ineq}
    \Tr(\Delta_t G_t)+F_t \tilde \Delta + \Tr(\bar \Lambda_t G_t)
    &\le \sum_{i=1}^m \bar \lambda_{t, *}^{(i)}\left(F_t \bm{f}_{m+i}-F_t \bm{f}_{i}+\Tr(A_{t, *}^{(i)}G_t)\right) 
    +\sum_{i=1}^m \bar \lambda_{*, t}^{(i)}\left(F_t \bm{f}_{i}-F_t \bm{f}_{m+i}+\Tr(A_{ *, t}^{(i)}G_t)\right).
\end{align}
As $(\bar G_t, \bar F_t, \bar \Lambda_t, (\bar \lambda_{t, *}^{(i)}), (\bar \lambda_{*, t}^{(i)}))$ is primal-dual optimal, in particular it is primal-dual feasible, and hence $\bar \Lambda_t\succeq 0$ and $\bar \lambda_{t, *}^{(i)}, \bar \lambda_{*, t}^{(i)}\ge 0$ for all $i=1, \ldots, m$. In particular, if $G_t$ and $F_t$ are generated by functions $f_i\in \mathcal F_{\mu, L}$ through Equations \eqref{SA::eq:GtPt}, it holds that the right-hand side of Equation \eqref{SOPT::eq:ineq} is nonpositive by Theorem \ref{convex_smooth_interpolation2}. By recalling that $\Exp{E_{t+1}-E_t}=\Tr(\Delta_t G_t)+F_t \tilde \Delta$, we write
\[
    \Exp{E_{t+1}-E_t}\le -\Tr(\bar \Lambda_tG_t)\le 0,
\]
where the latter follows as $\bar \Lambda_t, G_t\succeq 0$. Specifically, one can prove that $\Exp{E_{t+1}-E_t}\le 0$ by rewriting it in the form of \eqref{SOPT::eq:ineq}, using the obtained dual optimal solution, and writing $\Tr(\bar \Lambda_tG_t)$ as an appropriate sum of squares based on the Cholesky factorization of $\bar \Lambda_t$.

\subsection{Strong Duality} \label{SD::subsec:strong_duality}

\begin{theorem}
    Problem \eqref{SA::eq:follower_dual} is the dual of Problem \eqref{SOPT::prob:primal}. Moreover, strong duality holds.
\end{theorem}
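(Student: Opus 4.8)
The plan is to proceed in two stages: first identify the Lagrangian dual of the semi-definite program \eqref{SOPT::prob:primal} and confirm it coincides with the feasibility problem \eqref{SA::eq:follower_dual}, then establish that strong duality holds by exhibiting a Slater point (strict feasibility) for one of the two problems. For the first stage, I would start from the Lagrangian $L(G_t, F_t, \Lambda_t, \lambda_{t,*}^{(i)}, \lambda_{*,t}^{(i)})$ already written out in the excerpt, where $\Lambda_t \succeq 0$ is the multiplier for the constraint $G_t \succeq 0$ and the $\lambda$'s are the nonnegative multipliers for the interpolation inequalities. Taking the supremum over the primal variables $G_t \in \mathcal{S}^{2m}$ and $F_t \in \R^{2m}$ gives the dual objective; since $L$ is linear (affine) in both $G_t$ and $F_t$, this supremum is $+\infty$ unless the coefficient of $G_t$ and the coefficient of $F_t$ both vanish. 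The coefficient of $G_t$ vanishing yields exactly the matrix equation $-\Delta_t + \sum_i (\lambda_{t,*}^{(i)} A_{t,*}^{(i)} + \lambda_{*,t}^{(i)} A_{*,t}^{(i)}) = \Lambda_t$, and the coefficient of $F_t$ vanishing yields the linear equation $-\tilde\Delta + \sum_i \lambda_{t,*}^{(i)}(\bm f_{m+i} - \bm f_i) + \sum_i \lambda_{*,t}^{(i)}(\bm f_i - \bm f_{m+i}) = 0$. On the feasible set the residual objective is $0$, so the dual is the feasibility problem \eqref{SA::eq:follower_dual}, as claimed.

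For the second stage, the standard route is to invoke conic duality for SDPs: if the primal \eqref{SOPT::prob:primal} is strictly feasible (there exists $(G_t, F_t)$ with $G_t \succ 0$ and all interpolation inequalities holding strictly), then strong duality holds and the dual optimum is attained — equivalently, $B_t \le 0$ iff \eqref{SA::eq:follower_dual} is feasible. To produce a Slater point for the primal, I would construct an explicit Gram matrix: take $d = 2m$ and pick points/gradients in $\R^{2m}$ that are in "general position" — for instance, take $x_t - x_*$, the $g_t^{(i)}$, and the $g_*^{(i)}$ to be generic vectors (so $G_t = P_t^T P_t \succ 0$), with function values $f_t^{(i)}, f_*^{(i)}$ chosen large enough that each interpolation inequality \eqref{SA::eq:interpolation} holds with strict slack. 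Since the left-hand side of \eqref{SA::eq:interpolation} involves $f_i - f_j$ linearly while the right-hand side is a fixed quadratic in the (now fixed) points and gradients, one can always boost the $f$-values to make every inequality strict; one only needs to respect that the two inequalities between a fixed pair $(x_t, x_*)$ involve opposite sign combinations of $f_t^{(i)} - f_*^{(i)}$, so one must check that a generic choice of gradients makes the two quadratic right-hand sides strictly less than the corresponding $\pm(f_t^{(i)} - f_*^{(i)})$ simultaneously — this is possible because the two RHS quadratics are continuous in the data and we have a free parameter $f_t^{(i)} - f_*^{(i)}$ sitting between them. One subtlety is the equality constraint $\sum_i g_*^{(i)} = 0$ (encoded via $\bm p_{2m}$): it removes one gradient as a free variable, but the remaining $g_*^{(1)}, \dots, g_*^{(m-1)}$ together with $x_t - x_*$ and the $g_t^{(i)}$ can still be chosen to span $\R^{2m}$, keeping $G_t \succ 0$.

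The main obstacle I anticipate is verifying Slater's condition carefully rather than the dualization itself — one must exhibit a single configuration $(G_t, F_t)$ that is strictly primal-feasible for all $2m$ interpolation constraints at once, and handle the degenerate case $\mu = L$ separately (where the right-hand side of \eqref{SA::eq:interpolation} degenerates and the class $\mathcal F_{\mu,L}$ consists of quadratics, so the interpolation inequalities become equalities and strict primal feasibility in the naive sense fails — there one instead argues strong duality via strict feasibility of the \emph{dual}, or restricts to the relative interior). An alternative, cleaner path that sidesteps Slater entirely is to note that \eqref{SOPT::prob:primal} always has a finite optimal value attained (the primal feasible set is nonempty — take $x_t = x_* $, all gradients zero, all $f$-values zero — and by the structure of PEP problems for $L$-smooth functions the objective is bounded above), and then appeal to the fact that for SDPs, boundedness and attainment of the primal, together with strict feasibility, gives zero duality gap; if one can instead show the dual is strictly feasible, the same conclusion follows by symmetry. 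I would present the Slater-point construction as the primary argument and remark that $\tilde B_t = -B_t$ then follows immediately, closing the equivalence $B_t \le 0 \iff \eqref{SA::eq:follower_dual}\text{ feasible}$ used throughout Section~\ref{SD:sec}.
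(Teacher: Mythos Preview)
Your dualization argument is correct and matches the paper's: the Lagrangian is linear in $(G_t,F_t)$, so the dual is the feasibility problem \eqref{SA::eq:follower_dual} obtained by setting the coefficients of $G_t$ and $F_t$ to zero.

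The Slater construction, however, has a genuine gap. You propose to pick the vectors $x_t-x_*$, $g_t^{(i)}$, $g_*^{(i)}$ in generic position (so that $G_t\succ 0$) and then adjust the scalars $f_t^{(i)}-f_*^{(i)}$ to satisfy both interpolation inequalities. But summing the two interpolation inequalities \eqref{SA::eq:interpolation} for a fixed $i$ (the one with indices $(t,*)$ and the one with $(*,t)$) eliminates $f_t^{(i)}-f_*^{(i)}$ entirely and leaves a constraint purely on the vectors:
\[
(L+\mu)\,\langle g_t^{(i)}-g_*^{(i)},\,x_t-x_*\rangle \;\ge\; \|g_t^{(i)}-g_*^{(i)}\|^2 + \mu L\,\|x_t-x_*\|^2.
\]
This is the $(\mu,L)$-cocoercivity inequality, and it fails for generic vectors --- for instance whenever the inner product on the left is nonpositive while the right-hand side is strictly positive. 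So ``continuity in the data and a free parameter $f_t^{(i)}-f_*^{(i)}$ sitting between them'' does not close the argument: the interval you want $f_t^{(i)}-f_*^{(i)}$ to lie in is empty unless the vectors already satisfy the constraint above.

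The paper avoids this by going in the opposite direction: instead of choosing vectors first and hoping to fit function values, it builds explicit quadratic functions $f_i(x)=\tfrac12 x^\top Q_i x + b_i^\top x$ with spectra $\sigma(Q_i)\subset(\mu,L)$, so that the interpolation constraints are satisfied by construction (the data genuinely comes from functions in $\mathcal F_{\mu,L}$). The Hessians $Q_i$ and offsets $b_i$ are then engineered so that the matrix $P_t$ in \eqref{SA::eq:Gt} is upper-triangular with positive diagonal, forcing $G_t=P_t^\top P_t\succ 0$. This ``start from actual functions'' trick is the missing idea in your plan; once you have it, your worry about $\mu=L$ also disappears, since the construction is first done for $(\mu,L)=(1,6)$ and then affinely rescaled.
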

\begin{proof}
    To see that Problem \eqref{SA::eq:follower_dual} is in fact the dual of Problem \eqref{SOPT::prob:primal}, we consider the Lagrangian, which is given by 
    \begin{align*}
        L(G_t, F_t, \Lambda_t, \lambda_{t, *}^{(i)}, \lambda_{*, t}^{(i)}) 
        =& \Tr\left(\left(-\Delta_t 
        +\sum_{i=1}^m\lambda_{t, *}^{i}A_{t, *}^{(i)}+\sum_{i=1}^m\lambda_{*,t}^{i}A_{*,t}^{(i)}-\Lambda_t\right)G_t\right) \\
        &+\left(-\tilde \Delta+\sum_{i=1}^m\lambda_{*,t}^{i}(\bm f_{m+i}-\bm f_i)+\sum_{i=1}^m\lambda_{*,t}^{i}(\bm f_i-\bm f_{m+i})\right)F_t,
    \end{align*}
    from which we readily recover Problem \eqref{SA::eq:follower_dual} as the dual.
    In order to show that strong duality holds, we employ a standard Slater argument \citep{boyd_convex_2023} and construct a feasible point $(G_t, F_t)$ to Problem \eqref{SOPT::prob:primal} such that $G_t\succ 0$.

    To do so, we follow the same ideas as in \cite[Theorem 6]{taylor_smooth_2017}. Specifically, leveraging the discussion in Section \ref{SA::sec:reformulation}, we shall construct $m$ functions $f_i\in \mathcal F_{\mu, L}$ and points $x_t, x_*\in \R^d$ such that the matrix $P_t$ given by Equation \eqref{SA::eq:Gt} is upper triangular with positive entries on its diagonal. As such, assuming $d\ge 2m$, it will hold that $G_t=P_t^TP_t\succ 0$. We assume without loss of generality that $x_*=0$, and set $d=2m$.

    We shall start by proving the result for $\mu=1$ and $L=6$. This is without loss of generality, as will be shown later.

    We introduce the following functions $f_i$ and their gradients $\nabla f_i$ for $i=1, \ldots, m$,
    \[
        f_i\colon \R^d\to \R, x\mapsto \frac12 x^TQ_ix+b_ix\quad \text{and}\quad \nabla f_i\colon \R^d\to \R^d, x\mapsto Q_ix+b_i.
    \]
    We first define $f_i$ for $i=1, \ldots, m-1$. We let $Q_i$ be a block diagonal matrix of the form 
    \[
        Q_i=\begin{pmatrix}
            Q_i^1 & 0 \\ 0 & Q
        \end{pmatrix} - \bm e_{m+i}\bm e_{m+i}^T,
    \]
    where $Q_i^1\in \R^{(m-1)\times (m-1)}$ is a diagonal matrix with entries $2$ everywhere expect an entry of $5$ on the $i$th component, and $Q\in \R^{(m+1)\times (m+1)}$ is a tridiagonal matrix whose diagonal elements are $4$ and whose off diagonal elements are $-1$. We note that , $\sigma(Q_i)\subset (1,6)$ for all $i=1, \ldots, m-1$, by the Greshgorian Theorem \citep{quarteroni_numerical_2007}.

    We define $x_t=x=[y, z]$ with $y\in \R^{m-1}$ and $z\in \R^{m+1}$. Then, for $i=1, \ldots, m-1$,
    \[
        \nabla f_i(x_t)=g_t^{(i)}=2\sum_{j=1, j\neq i}^{m-1} \bm e_jx_j+5\bm e_ix_i+Qz-\bm e_{m+i}x_{m+i}+b_i.
    \]
    As such, we define $b_i$ as
    \[
        b_i=-2\sum_{j=1, j\neq i}^{m-1} \bm e_jx_j+\bm e_{m+i}x_{m+i}-\bm e_{m},
    \]
    and we impose that $x>0$ and that $Qz=[1, 0, \ldots, 0]\in \R^{m+1}$ (may be shown possible by a simple induction argument). This thus gives us that 
    $
        g_t^{(i)}=5\bm e_ix_i
    $.
    Moreover, as $x_*=0$, we have that $g_*^{(i)}=b_i$. Note that $g_t^{(i)}$ has a positive entry in its $i$th component and $0$s everywhere else. Moreover, $g_*^{(i)}$ has a positive component in its $m+i$th component and $0$ in all subsequent components. The vector $x_t-x_*$ also has a positive entry in component $2m$. In order to show that $P_t$ is upper triangular with positive diagonal entries, we thus only need to construct $g_t^{(m)}$.

    By construction we require $\nabla f(x_*)=0$, namely that 
    \[
        b_m=-\sum_{i=1}^{m-1}b_i=\sum_{i=1}^{m-1}(2m-1)\bm e_ix_i - \sum_{i=1}^{m-1}\bm e_{m+i}x_{m+i} + (m-1)\bm e_m.
    \]
    As such, we may define $Q_m$ to be the identity matrix (which satisfies $\sigma(Q_m)=\{1\}$), such that 
    \[
        \nabla f_m(x_t)=g_t^{(m)}=\sum_{i=1}^{m-1}2m\bm e_ix_i + m\bm e_m.
    \]
    As such, $g_t^{(m)}$ has a positive entry in its $m$th component and zero entries in all subsequent components. As such, $P_t$ is upper triangular with positive diagonal entries, as wanted. 

    As such, if $(\mu, L)=(1, 6)$, the result holds. For arbitrary $(\mu, L)$, we consider the following transformed functions 
    \[
        \tilde f_i(x)=\frac{L-\mu}{5}\left(f_i(x)-\frac12\|x\|^2\right)+\frac\mu2\|x\|^2.
    \]
    As $f_i\in \mathcal F_{1, 6}$, we have $\tilde f_i\in \mathcal F_{\mu, L}$. Moreover, this transformation preserves the property on $P_t$, hence constructing a Slater point for arbitrary $(\mu, L)$.
\end{proof}
% !TEX root = main.tex

\section{SGD with Non-Uniform Sampling and Mini-Batching}\label{SA::sec:mini}\label{S:minibatch}\label{SA::sec:extensions}

All the main results in our paper can be applied to variants of \eqref{S1::eq:SGD}, such as mini-batching or nonuniform sampling.
This is because these variants can be seen as instances of \eqref{S1::eq:SGD} applied to different but equivalent problems.
Our results can be applied as a black-box, and the results follow by computing the main constants of the equivalent problem, namely $L,\mu, \sigma_*^2$.
This strategy was described in \cite{gower_sgd_2019}.
For the sake of completeness, we include some details for the two aforementioned instances below.

\subsection{Non-Uniform Sampling}

Consider a finite family of functions $f_1, \dots, f_n$ which we assume to be $\mu_i$-strongly convex and $L_i$-smooth, for $\mu_i \geq 0$ and $L_i >0$.
We want to study the non-uniform SGD algorithm, which computes
\begin{equation}\label{D:SGD general proba}\tag{SGD$_p$}
    x_{t+1} = x_t - \frac{\gamma}{n p_{i_t}} \nabla f_{i_t}(x_t),
\end{equation}
where $i_t \in \{1, \dots, n\}$ is sampled according to $\mathbb{P}(i_t = i) = p_i$, with $p_i > 0$ and $\sum_{i=1}^n p_i = 1$.
Note that we are using here a specific renormalization of the step-size, following the ideas from \cite{gower_sgd_2019}.
It is immediate to see that this algorithm is exactly \eqref{S1::eq:SGD} applied to the problem 
\begin{equation*}
    \min_x \ f(x)  =\ExpD{\mathcal{P}}{\hat f_i(x)},
    \text{ where }
    \hat f_i(x) \coloneqq  \frac{1}{n p_i} f_i(x),
\end{equation*}
where the expectation is taken with respect $\mathcal{P}$, the distribution whose density is given by the probabilities $p_i$.
Note that the function $f$ remains unchanged, meaning that the set of minimizers is the same.

We note that Assumption \ref{Ass:strongly convex smooth} holds for $\mu=\min_{i}\tfrac{\mu_i}{n p_i}$ and $L=\max_i\tfrac{L_i}{n p_i}$. Moreover, one can compute that 
\begin{equation*}
    \hat\sigma_*^2=\Exp{\Vert \nabla \hat f_i(x_*) \Vert^2}=\Exp{\frac{1}{n^2p_i^2}\Vert \nabla f_i(x_*) \Vert^2}\le \frac{\Exp{\Vert \nabla f_i(x_*) \Vert^2}}{n^2\min_i p_i^2}=\frac{\sigma_*^2}{n^2\min_i p_i^2},
\end{equation*}
which is finite if Assumption \ref{Ass:bounded solution variance} holds for the original problem.
If we now impose that $\gamma L \in (0,2)$, we can apply all our results and obtain bounds on the iterates of \eqref{D:SGD general proba} depending on the above constants.

\subsection{Mini-Batching}

Consider a finite family of functions $f_1, \dots, f_n$ which we assume to be $\mu_i$-strongly convex and $L_i$-smooth, for $\mu_i \geq 0$ and $L_i >0$.
We want to study the mini-batch SGD algorithm, which computes at each iteration a mini-batch of fixed size $1 \leq b \leq n$, that is
\begin{equation}\label{D:SGD miibatch}\tag{SGD$_b$}
    x_{t+1} = x_t - \frac{\gamma}{b} \sum_{i \in B_t} \nabla f_{i}(x_t),
\end{equation}
where $B_t$ is sampled i.i.d. and uniformly among the subsets of size $b$ of $\{1, \dots, n\}$.
It is a simple exercise to see that this algorithm is exactly \eqref{S1::eq:SGD} applied to the problem 
\begin{equation*}
    \min_x \ f(x)  =\ExpD{\mathcal{B}}{\hat f_B(x)},
    \text{ where }
    \hat f_B(x) \coloneqq \frac{1}{b} \sum_{i \in B} \nabla f_{i}(x),
\end{equation*}
and where the expectation is taken with respect to $\mathcal{B}$ which is the uniform law over
\begin{equation*}
    \text{batch}_b \coloneqq  \{ B \subset \{1, \dots, n \} \ : \ \vert B \vert = b \}.
\end{equation*}
We note that Assumption \ref{Ass:strongly convex smooth} holds for $\mu=\min_{i}\mu_i$ and $L=\max_iL_i$. Moreover, one can compute that 
\begin{align*}
    \sigma_*^2 & =
    \ExpD{\mathcal{B}}{\Vert \nabla f_B(x_*) \Vert^2}
    = \frac{1}{|\mathcal B|}\sum_{B\in \mathcal B} \left\|\frac1b\sum_{i\in B}\nabla f_i(x_*)\right\|^2
    = \frac{1}{\binom{n}{b}\cdot b^2}\sum_{B\in \mathcal B}\sum_{i, j\in B}\langle \nabla f_i(x_*), \nabla f_j(x_*)\rangle \\
    & = \frac{1}{\binom{n}{b}\cdot b^2}\left[\binom{n-2}{b-2}\sum_{i, j=1}^n \langle \nabla f_i(x_*), \nabla f_j(x_*)\rangle + \left[\binom{n-1}{b-1}-\binom{n-2}{b-2}\right]\sum_{i=1}^n\|\nabla f_i(x_*)\|^2\right]
    = \frac{n-b}{nb(n-1)} \sum_{i=1}^n \Vert \nabla f_i(x_*) \Vert^2,
\end{align*}
which is finite if Assumption \ref{Ass:bounded solution variance} holds for the original problem.

\bibliographystyle{spmpsci}
\bibliography{references}

\end{document}